\newcommand{\abs}[1]{\lvert#1\rvert}
\newcommand{\bigabs}[1]{\big\lvert#1\big\rvert}
\newcommand{\B}{\mathbb{B}}
\newcommand{\bb}[1]{\mathbb{#1}}
\newcommand{\bwg}{\msf{BW}^{\mathcal{G}}_{16}}
\newcommand{\cC}{\mathcal{C}}
\newcommand{\cE}{\mathcal{E}}
\newcommand{\cG}{\mathcal{G}}
\newcommand{\cH}{\mathcal{H}}
\newcommand{\cK}{\mathcal{K}}
\newcommand{\cO}{\mathcal{O}}
\newcommand{\even}{\mathrm{I I}}
\newcommand{\fF}{\mathfrak{F}}
\newcommand{\fK}{\mathfrak{K}}
\newcommand{\cell}{\mathcal{G}_{1,1}}
\newcommand{\C}{\mathbb{C}}
\newcommand{\F}{\mathbb{F}}
\newcommand{\K}{\mathbb{K}}
\newcommand{\ph}{\phantom{-}}
\newcommand{\Q}{\mathbb{Q}}
\newcommand{\R}{\mathbb{R}}
\newcommand{\T}{\mathbb{T}}
\newcommand{\Z}{\mathbb{Z}}
\newcommand{\ip}[2]{ \langle  #1 \vert #2  \rangle }
\newcommand{\bigip}[2]{ \big\langle  #1 \big\vert #2  \big\rangle }
\newcommand{\msf}[1]{\mathsf{#1}}
\newcommand{\op}[1]{\operatorname{\msf{#1}}}
\newtheorem{theorem}{Theorem}[section]
\newtheorem{lemma}[theorem]{Lemma}
\newtheorem{corollary}[theorem]{Corollary}
\theoremstyle{definition}
\newtheorem{definition}[theorem]{Definition}
\newtheorem{topic}[theorem]{}
\theoremstyle{remark}
\newtheorem{remark}[theorem]{Remark}
\begin{document}
%
%*******************************************************************************************************
%
%
%*******************************************************************************************************
%
\title[]{A new complex reflection group in $PU(9,1)$ and the Barnes-Wall lattice}
\author{Tathagata Basak}
\address{Department of Mathematics\\Iowa State University, \\Ames, IA 50011}
\email{tathagat@iastate.edu}
\urladdr{http://orion.math.iastate.edu/tathagat}
\keywords{complex reflection group, hyperbolic reflection group, arithmetic lattices, Barnes-Wall lattice}
\subjclass[2010]{%
Primary: 11H56%Automorphism groups of Lattices
, 20F55%Reflection and Coxeter groups
; 
Secondary: 20F05%Generators, relations, and presentations
, 51M10%Hyperbolic and elliptic geometries (general) and generalizations.
}
\date{April 12, 2018}
\begin{abstract}
We show that the projectivized complex reflection group
$\Gamma$ of the unique $(1+i)$-modular Hermitian 
$\Z[i]$-module of signature $(9,1)$ is a new arithmetic reflection group in $PU(9,1)$. 
We find $32$ complex reflections of order four generating $\Gamma$.
The mirrors of these $32$ reflections form the vertices of a 
sort of Coxeter-Dynkin diagram
$D$ for $\Gamma$ that encode Coxeter-type generators
and relations for $\Gamma$. The vertices of $D$ can be indexed by
sixteen points and sixteen affine hyperplanes
in $\F_2^4$. The edges of $D$ are determined by the finite geometry of these
points and hyperplanes. 
The group of automorphisms of the diagram $D$ is
$2^4 \colon (2^3 \colon L_3(2)) \colon 2$. This group transitively permutes
the $32$ mirrors of generating reflections 
and fixes an unique point $\tau$ in $\C H^9$. These $32$ mirrors
are precisely the mirrors closest to $\tau$.
These results are strikingly similar to the results satisfied by the complex
hyperbolic reflection group at the center of Allcock's monstrous proposal.
\end{abstract}
\maketitle
%
%*******************************************************************************************************
%
\section{Introduction}
Let $\cG = \Z[i]$ be the ring of Gaussian integers. Let $p = (1 + i)$. 
We study the projectivized complex reflection group $\Gamma = \Gamma_1$
of the unique $p$-modular
Hermitian $\cG$--lattice of signature $(9,1)$. In particular, we show that $\Gamma_1$
is arithmetic and we find nice generators and relations for $\Gamma_1$
(The notation $\Gamma_1$ is only used in the introduction. Afterwards
we shall simply write $\Gamma$ instead of $\Gamma_1$).
We mention three reasons  for our interest in $\Gamma_1$:
\begin{itemize}
\item There are only few known examples of lattices in $PU(n,1)$ generated by complex
reflections when $n > 3$ and these are all arithmetic.
The two largest values of $n$ for which an example is known are $13$ and $9$.
Sources for these examples are Deligne-Mostow \cite{DM:monodromy, Mo:generalized, Mo:discontinuous}, 
Thurston \cite{T:shape} and Allcock \cite{A:NC, A:Leech}.
The ``largest" examples found in \cite{Mo:discontinuous} and \cite{T:shape} are identical; it is 
an arithmetic lattice in $PU(9,1)$. This lattice is denoted by
$\Gamma_5$ later in this introduction. A single example in
dimension thirteen ($\Gamma_2$ in our notation) was found in \cite{A:Leech}.
The group $\Gamma_1$ of this article clearly gives a new example in $PU(9,1)$.
\item Allcock's monstrous proposal conjecture states that the fundamental group of 
the ball quotient constructed from $\Gamma_2$ maps onto $(M \wr 2)$ where $M$ is the monster simple group. The arithmetic lattice $\Gamma_2 \subseteq PU(13,1)$ plays a central role in
the monstrous proposal; see \cite{A:monstrous, TB:el, AB:braidlike, AB:26}. Our results  for $\Gamma_1$
have striking similarity with results for $\Gamma_2$ obtained in \cite{TB:el}.
\item The results for $\Gamma_1$ and $\Gamma_2$ (and a few other lattices in $PU(n,1)$)
tie into a general pattern of phenomenon that have analogy
in the theory of Weyl groups; see Theorem \ref{th-1}. Understanding this analogy
may be useful in finding more interesting examples and in studying complex reflection groups in general.
\end{itemize}
Before discussing $\Gamma_1$ in more detail, we want to describe this general
pattern of phenomenon.
Let $\K$ denote one of three real division algebras $\R$, $\C$ or $\mathbb{H}$.
Let $V$ be a $\K$-module with a non-degenerate $\K$-valued Hermitian form
which is either positive definite or Lorentzian (i.e. of signature $(n,1)$). Let $G$ denote the real
Lie group of isometries of $V$. Let $\Gamma$ be a discrete subgroup of $G$
generated by real, complex or quaternionic reflections.
Let $X$ be the projective space over $\K$ if $V$ is positive definite and let $X$ be
the hyperbolic space over $\K$ if $V$ is Lorentzian.
There is a natural $G$-invariant metric on the symmetric space $X$.
The discrete group $\Gamma$ acts properly discontinuously on $X$ by isometries.
The fixed points of a reflection $s \in \Gamma$ is a totally geodesic
$\K$-hypersurface in $X$ called the {\it mirror} of $s$.
Nice generators and relations for $\Gamma$ can often be found as follows:
{\it Choose a suitable point $\tau$ in $X$ that is a point of symmetry of the mirrors
of $\Gamma$ in an appropriate sense. Let $S$ be the set of reflections in $\Gamma$
whose mirrors are closest to $\tau$. In many examples, the reflections in $S$ generate $\Gamma$
and these reflections satisfy nice Coxeter-type relations. }
First, we give some well known examples:

\begin{itemize}
\item Let $\Gamma$ be a Weyl group of $A$-$D$-$E$ type acting on the real vector space $V$
by its natural reflection representation. Let $\tau$ be the line in $V$ containing the
Weyl vector. Then the mirrors of the simple reflections
are exactly the mirrors in $P(V)$ closest to $\tau$. So in this case $S$ is just the
set of simple mirrors. In analogy with this classical case, in all the examples
described below, the reflections in $S$ will be called {\it simple reflections}
and the corresponding mirrors will be called {\it simple mirrors}.
\item Let $\Gamma$ be an irreducible finite complex reflection group acting on 
$V = \C^n$. For most $\Gamma$ including the infinite family $G(de, e, n)$,
there exists a point $\tau$ in $P(V)$ such that the mirrors closest to 
$\tau$ generate $\Gamma$ (see \cite{TB:uggr}, section 3.10 and section 3.11, remark (5)).
For the infinite family $G(de, e, n)$
one can choose $\tau$ such that $S$ is the standard set of generators, given,
for example in \cite{BMR:CR}.
However, for some exceptional $\Gamma$, one can choose a $\tau$
that are analogous to a Weyl vector (in a certain sense, explained in \cite{TB:uggr})
and that yield a set of generators
$S$ different from the standard ones given in \cite{BMR:CR}.
\item The reflection group $\Gamma$ of $\even_{25,1}$ (the unique even self-dual $\Z$-lattice
of signature $(25,1)$) acts on 
the real hyperbolic space $X = \R H^{25}$. Chooses $\tau$
to be a ``Leech cusp", which means that $\tau$ is a line containing a primitive norm zero vector
$\tau_*$ such that $\tau_*^{\bot}/\tau_*$ is isomorphic to the Leech lattice.
Here we are stretching our discussion a little bit since 
$\tau$ is not really a point in $\R H^{25}$ but a point on its boundary.
The mirrors closest to $\tau$ in horocyclic distance are parametrized by
the vectors of the Leech lattice (modulo $\pm 1$). So they are called
Leech mirrors. The reflections in the Leech mirrors generate $\Gamma$.
These generating reflections obey Coxeter relations governed by the Leech lattice, 
leading to Conway's observation: ``the Leech lattice is the Dynkin diagram of the
reflection group of $\even_{25,1}$" \cite{C:Aut26}.
\end{itemize}
Counting the example studied in this article, we now have at least six examples
of complex and quaternionic hyperbolic reflection groups, where similar results hold.
To state these results in an uniform manner, we need some notation.
Let $\cG$, $\cE$ and $\cH$ denote the ring of Gaussian integers, 
the ring of Eisenstein integers and the quaternionic ring of Hurwitz integers
respectively. Let $\cO$ be one of these three rings.
Let $l$ be a nonzero prime in $\cO$ of smallest possible norm.
If $\cO = \cG$ or $\cO = \cH$, we may choose $l = p = (1 + i)$. If $\cO = \cE$, 
we may choose $l = \sqrt{-3}$. 
%So $p = (1+i)$ if $\cO = \cG$ or $\cH$ and $p = \sqrt{-3}$ if $\cO = \cE$.
Let $\cO_{n,1}$ 
denote a $l$-modular Hermitian $\cO$--lattice of signature $(n,1)$,
if such a lattice exists\footnote{This means $\cO_{n,1}$ is a free (right) $\cO$--module 
of rank $(n+1)$ with a $\cO$--valued Hermitian form 
$ \ip{\;}{\;} :\cO_{n,1} \times \cO_{n,1} \to \cO$
of signature $(n,1)$, and $l^{-1} \cO_{n,1}$ 
is equal to the dual lattice of $\cO_{n,1}$.}.
In particular, we define $\cO_{1,1} = \cO e_1 \oplus \cO e_2$ where 
$e_1^2 = e_2^2 = 0$ and $\ip{e_1}{e_2} = \bar{l}$. 
The lattice $\cO_{1,1}$ is the unique $l$-modular Hermitian $\cO$-lattice of 
signature $(1,1)$ and we call it the {\it hyperbolic cell}.
We shall consider the following six lattices:
\begin{equation*}
L_1 = \cG_{9,1},  \;\;
L_2 = \cE_{13,1},\;\;
L_3 = \cH_{7,1}, \;\;
L_4 = \cG_{5,1}, \;\;
L_5 = \cE_{9,1}, \;\;
L_6 = \cH_{5,1}.
\end{equation*}
In each case $L_j$ is the unique $l$-modular Hermitian $\cO$-lattice in its given signature.
Note that $L_{3 + j}$ is a sub-lattice of $L_j$.
%The appropriately scaled real form of these six lattices respectively are
%\begin{equation*}
%4 D_4 \oplus \bigl( \begin{smallmatrix} 0 & 1 \\ 1 & 0 \end{smallmatrix} \bigr), \;
%I \! I_{28,4},\; I \! I_{26,2}, \; I \! I_{28,4},\;
%2 D_4 \oplus \bigl( \begin{smallmatrix} 0 & 1 \\ 1 & 0 \end{smallmatrix} \bigr), \;
%I \! I_{18,2},\; I \! I_{20,4}. 
%\end{equation*}
Let $R(L_j)$ be the (complex or quaternionic) reflection group of $L_j$ and let 
$\Gamma_j = P R(L_j)$ be the image of $R(L_j)$ in $PU(n,1)$.
The mirrors of $\Gamma_j$ are determined by the orthogonal complements
of vectors of minimal positive norm in $L_j$. Since $L_j$ is indefinite, there are infinitely many mirrors.
If $\cO =\cE$, then the reflection group $\Gamma_j$ contains
order $3$ reflections around the mirrors. 
If $\cO = \cG$ or $\cH$, then $\Gamma_j$ contains
order $4$ and order $2$ reflections around the mirrors.
The projectivized reflection group
$\Gamma_j$ acts faithfully on the complex or quaternionic hyperbolic space $X_j$ of appropriate
dimension. The following results hold:
\begin{theorem}
(a) $\Gamma_j$ has finite index in 
$P\op{Aut}(L_j)$. So $\Gamma_j$ is arithmetic.
\par
(b) There is a point $\tau_j$ in $X_j$ such that the set of reflections $S_j$
in the mirrors closest to $\tau_j$ (i.e. the simple mirrors) generate $\Gamma_j$. Further,
$P\op{Aut}(L_j)$ has a finite subgroup $ Q_j$ that acts transitively
on the simple mirrors and $\tau_j$ is the unique point of $X_j$
fixed by $Q_j$. 
\par
(c) The Coxeter relations between the simple reflections $S_j$
are encoded by the edges of a diagram $D_j$ which we call the Dynkin diagram of $\Gamma_j$.
The vertices of $D_j$ correspond to the simple reflections.
The diagram $D_2$ (resp. $D_3$) 
is the incidence graph of $P^2(\F_3)$ (resp.  $P^2(\F_2)$).
The  diagram $D_1$ has $32$ vertices and is 
defined by the incidence relations of $16$ points
and sixteen hyperplanes in $\F_2^4$.
A precise description of $D_1$ is given later in this introduction.
Finally $D_{3 + j}$ is a maximal circuit in $D_j$.
\label{th-1}
\end{theorem}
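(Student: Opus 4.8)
The six cases are not independent. For $j=2,3$ all three assertions are the main results of \cite{A:Leech, TB:el}, the quaternionic case $j=3$ being handled analogously (cf.\ \cite{A:NC}). For $j=4,5,6$ one argues by restriction to the distinguished sublattice $L_{3+j}\subset L_j$ (which is primitively embedded and has real codimension $8$): its roots are exactly the roots of $L_j$ lying in the subspace $L_{3+j}\otimes\C$, so $R(L_{3+j})$ is generated inside $R(L_j)$ by the corresponding reflections; one then verifies that the simple roots of $L_{3+j}$ are precisely the simple roots of $L_j$ lying in $L_{3+j}\otimes\C$, that they span that subspace and form a maximal circuit in $D_j$, and that $\tau_{3+j}$ is the restriction of $\tau_j$, fixed by the restriction of a suitable subgroup of $Q_j$. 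Parts (a)--(c) for $L_{3+j}$ then follow by repeating the arguments below inside the totally geodesic $X_{3+j}\subset X_j$. So the real content is the case $j=1$; write $L=\cG_{9,1}$, $\Gamma=\Gamma_1$, $X=\C H^9$.

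I would start from the splitting $L\cong\cell\oplus\bwg$, where $\bwg$ is the unique positive-definite $(1+i)$-modular $\cG$-lattice of rank eight, the complex Barnes--Wall lattice, whose minimal vectors and automorphism group $\Aut(\bwg)$ are classical. This makes the set $M$ of mirrors of $\Gamma$ --- the orthogonal complements of the \emph{roots}, i.e.\ the primitive vectors of minimal positive norm --- explicit. Next, exhibit the thirty-two roots $r_1,\dots,r_{32}$ and the negative line $\tau=\C\rho\subset X$ around which they are balanced, meaning $\abs{\ip{\rho}{r_k}}^2/\nm{r_k}^2$ is independent of $k$ and minimal, so that $r_1^\perp,\dots,r_{32}^\perp$ are exactly the mirrors closest to $\tau$. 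Index the $r_k$ by the sixteen points and sixteen affine hyperplanes of $\F_2^4$; the incidence pattern determines all inner products $\ip{r_k}{r_l}$ and hence the graph $D=D_1$, and a direct finite-geometry computation gives $\Aut(D)=2^4\colon(2^3\colon L_3(2))\colon 2$. One then checks that each symmetry of $D$ is induced by an element of $\op{Aut}(L)$ fixing $\rho$, producing the subgroup $Q=Q_1$, transitive on $\{r_1^\perp,\dots,r_{32}^\perp\}$. For uniqueness of the $Q$-fixed point: the $Q$-fixed locus in $X$ comes from a nonzero $Q$-invariant subspace of $\C^{9,1}$ on which the form is non-positive, and this must be $\C\rho$ because $Q$ fixes no vector of $\rho^\perp$ and no isotropic line --- facts one reads off from the (essentially irreducible) action $Q$ induces on $\bwg\otimes\C$.

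The heart of the argument, and the step I expect to be the main obstacle, is that $S=\{r_1,\dots,r_{32}\}$ generates $\Gamma$. Since $\Gamma$ is generated by all of its reflections, and the order-two and order-four reflections in a fixed mirror are mutual powers, it suffices to show that $\Gamma'=\langle S\rangle$ acts transitively on $M$. This is proved by a height-reduction algorithm in the spirit of Conway's argument for the reflection group of $\even_{25,1}$ and its complex analogue in \cite{TB:el}: one defines a height of a root measuring its distance to $\tau$, shows that the roots of minimal height are exactly $r_1,\dots,r_{32}$ and that they form a single $\Gamma'$-orbit (using the $\bwg$-structure and the connectedness of $D$), and proves the reduction lemma that any root of larger height is carried to strictly smaller height by some $r_k$ or $r_k^{-1}$. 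Establishing this lemma is where the fine arithmetic of $L$ is pushed hardest --- the exact list of inner-product values attained by pairs of roots, divisibility by $p=(1+i)$, and careful $\F_2^4$ bookkeeping --- and it is a genuine case analysis rather than a structural argument, so it is the bottleneck.

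Once $S$ is known to generate, part (c) is bookkeeping: compute $\abs{\ip{r_k}{r_l}}$ for all pairs and translate each value, via the standard dictionary for order-four complex reflections over $\cG$ (commutation when $r_k^\perp\perp r_l^\perp$, a braid relation for the generic nonzero value, and the few further relations attached to the remaining values that occur), into a Coxeter-type relation; the resulting relations are precisely those recorded by the edges of $D_1$, and the statements about $D_2$, $D_3$ and the circuits $D_{3+j}\subset D_j$ are the finite-geometry facts already noted. Finally, for part (a): $P\op{Aut}(L)$ is arithmetic by Borel--Harish-Chandra, so it is enough to show $[P\op{Aut}(L):\Gamma]<\infty$, and I would do this by forming the Dirichlet domain $F=\{x\in X:d(x,\tau)\le d(x,\gamma\tau)\text{ for all }\gamma\in\Gamma\}$ and using the reduction lemma to bound which $\Gamma$-translates of simple mirrors can support a facet of $F$; then $F$ has finitely many facets and finite volume, so $\Gamma$ has finite covolume and hence finite index in $P\op{Aut}(L)$. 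Alternatively, one can transplant Allcock's finite-index argument from the $\Gamma_2$ case.
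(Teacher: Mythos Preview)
Your overall architecture is right in spirit, but the central step --- the ``reduction lemma'' asserting that every root not at minimal height can be brought strictly closer to $\tau$ by some simple reflection --- is false as stated, and this is exactly where the paper's approach diverges from yours. The paper's computer calculation (proof of Theorem~\ref{th-32-reflections-generate-R(L)}) in fact discovers $401$ roots whose mirrors are local minima for distance to $\tau$ under the action of reflections in $S$: no $r_k^{\pm1}$ decreases their height. So a clean Conway--style descent to $\tau$ does not exist here, and your proposed case analysis cannot be completed.

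What the paper does instead is a two-stage argument that you do not anticipate. First, it runs height reduction not toward the interior point $\tau$ but toward a \emph{Barnes--Wall cusp} $\rho$ on $\partial\B(L)$ (a primitive null vector with $\rho^\perp/\rho\cong\bwg$, so no mirror passes through it). The key input is the covering lemma~\ref{l-covering-lemma} (from \cite{A:NC,REB:llll}, ultimately resting on the covering radius of the Leech lattice): $\bwg\otimes\R$ is covered by balls of radius $\sqrt{2}$ around $\bwg$ and radius $1$ around $p^{-1}\bwg(2\bmod4)$. This is what guarantees that height reduction toward $\rho$ \emph{always} succeeds using only first- and second-shell reflections (Lemmas~\ref{l-height-reduction-1-2}, \ref{l-first-two-shells-generate}), and it is the piece of arithmetic you are missing. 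Second, the infinitely many first- and second-shell roots are cut down to an explicit finite list $S_0\cup S_1\cup S_2$ using translations in a Heisenberg group $\T\subset\op{Stab}(\rho)$ (Lemma~\ref{l-finite-list-of-generators-for-R(L)}); only then does the paper attempt descent toward $\tau$, now on a finite set, by computer, with an ad~hoc perturbation by one extra reflection $R_y$ to handle the $401$ stuck cases.

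The same cusp-based machinery, not a Dirichlet-domain argument, gives part~(a): one shows $R(L)$ is transitive on Barnes--Wall cusps (Lemma~\ref{l-R(L)-is-transitive-on-BW-cusps}, again via the covering lemma), and then any $g\in\op{Aut}(L)$ factors as (element of $R(L)$)$\cdot$(translation)$\cdot$(element of the finite group $\op{Aut}(\bwg)$), with the translation coset controlled by Lemma~\ref{l-R(L)-has-many-translations}. Your Dirichlet-domain route would require the very reduction lemma that fails. Finally, for $j=4,5,6$ the paper does not argue by restriction from $L_j$ as you suggest; it cites separate (and easier) parallel arguments, and the statement that $\tau_{3+j}$ is the restriction of $\tau_j$ is neither claimed nor needed.
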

In each case, $Q_j$ is roughly the automorphism group of the diagram $D_j$.
Theorem \ref{th-1} is the summary of results from a few articles.
For $\Gamma_2, \Gamma_3, \Gamma_5, \Gamma_6$, part (a) is due to Allcock
\cite{A:NC, A:Leech}.  For $\Gamma_4$, theorem \ref{th-1} is due to Goertz \cite{Go:Thesis} (unpublished). 
The rest of the results are due to the author.
In this paper, we prove Theorem \ref{th-1} for $\Gamma_1$.
Theorem \ref{th-1} for $\Gamma_2$, $\Gamma_3$, $\Gamma_5$
follow from the results in \cite{TB:el}, \cite{TB:ql} and section 4.1 of \cite{TB:Thesis} respectively.
The generators and relations for $\Gamma_2$ encoded in the diagram $D_2$
form the basis for Allcock's monstrous proposal conjecture \cite{A:monstrous}.
One of our motivation for studying $\Gamma_1$ in detail is the
close similarity between $\Gamma_1$ and $\Gamma_2$ and our interest in
 $\Gamma_2$ stemming from the monstrous proposal conjecture.
The proofs of part (b), (c) for $\Gamma_6$ have not been written up.
However the proofs for $\Gamma_{3 + j}$ are entirely
similar to the proofs for $\Gamma_j$ and easier. 
A detailed study of the references mentioned and this paper reveal
many more similarities between these reflection groups. 
\par
For the rest of the introduction, we shall focus on  $\Gamma_1$
and give some more details.
To maintain notational consistency with the references cited above, 
we shall drop the subscript and write
$L = L_1$, $\tau = \tau_1$ and so on. 
 We work over the ring
$\cG = \Z[i]$. Let $p = (1 + i)$. 
Our objective is to study the complex reflection group of the unique $p$-modular 
$\cG$--lattice $L = \cG_{9,1}$ of signature $(9,1)$. 
%The uniqueness of such an $L$ follows from the uniqueness of even self-dual $\Z$--lattice of prescribed indefinite signature.
One has 
\begin{equation*}
L 
\simeq 4 D_4^{\cG} \oplus \cG_{1,1}
\simeq \bwg \oplus \cG_{1,1}.
\end{equation*}
where $D_4^{\cG}$ and $\bwg$ are the Gaussian forms of the $D_4$ root lattice 
and the Barnes-Wall lattice respectively.
The projective reflection group $\Gamma$  is a discrete subgroup of
$P U(9,1)$ and acts faithfully by isometries on the complex hyperbolic space 
\begin{equation*}
\B(L) = \lbrace  \C v \colon v \in L \otimes_{\cG} \C, \; \ip{v}{v} < 0 \rbrace \simeq \C H^9.
\end{equation*}
By definition, $R(L)$ is generated by $i$-reflections 
(order $4$ complex reflections) in the norm $2$ vectors of $L$.
By definition, $\Gamma = P R(L)$ is the image of $R(L)$ in $PU(9,1)$.
\begin{theorem}
(a) (See \ref{th-Gamma-is-arithmetic}) $\Gamma$ has finite index in $P \! \op{Aut}(L)$. So $\Gamma$ is arithmetic.
\par
(b) (See \ref{th-13-reflections-generate-R(L)}, \ref{th-32-reflections-generate-R(L)}) $\Gamma$ is generated by thirteen $i$-reflections satisfying the Coxeter relations of the diagram
$X_{3333}$ shown in figure \ref{fig-X3333}.
\par
(c) (See \ref{l-L-from-D}, \ref{th-simple-mirrors}, \ref{t-configuration})
The above generating set of thirteen $i$-reflections can be extended to a set of 
thirty-two $i$-reflections whose mirrors are equidistant from a point $\tau$ in $\B(L)$. 
These $32$ mirrors are precisely the mirrors closest to $\tau$.
A subgroup $Q$ of $P \! \op{Aut}(L)$ isomorphic to $(2^4 \colon (2^3 \colon L_3(2))) \colon 2$
(in \cite{C:ATLAS} notation)
acts transitively on the $32$ mirrors and $\tau$ is the unique point in $\B(L)$ fixed by $Q$.
\end{theorem}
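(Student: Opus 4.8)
The plan is to build a $\cG$-lattice from the diagram $D$ (which has thirty-two vertices) by the standard procedure of extracting a complex lattice and a system of root vectors from a Coxeter-type diagram, to check that this lattice is isometric to $L$, and also to keep at hand the coordinates given by $L \simeq \bwg \oplus \cG_{1,1}$. The construction from $D$ supplies the thirty-two norm $2$ roots $r_1,\dots,r_{32}$, indexed by the sixteen points and sixteen affine hyperplanes of $\F_2^4$, together with their Gram matrix over $\cG$. From the Gram matrix one reads off that the $i$-reflections $R_1,\dots,R_{32}$ satisfy, pairwise, exactly the Coxeter-type relations recorded by the edges of $D$; in particular the thirteen among them forming the induced sub-diagram $X_{3333}$ satisfy the relations asserted in part (b). The finite group $Q$ is taken to be the stabiliser in $P\op{Aut}(L)$ of the set $\{r_1^\perp,\dots,r_{32}^\perp\}$ of thirty-two mirrors; reading off its action on the thirty-two roots --- i.e.\ on the points and affine hyperplanes of $\F_2^4$ --- identifies $Q$ with the full automorphism group of $D$, namely $(2^4 \colon (2^3 \colon L_3(2))) \colon 2$. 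All of this is explicit but essentially mechanical.

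The substance of part (b) is that these reflections generate $R(L)$, hence $\Gamma$ after projectivising. For this I would run a height-reduction argument relative to the Barnes--Wall cusp: fix a primitive isotropic vector $z$ in the $\cG_{1,1}$ summand, so that $z^{\perp}/z \simeq \bwg$, and for a norm $2$ root $v$ put $h(v) = \abs{\ip{v}{z}}$. One checks that $h$ takes values in $\abs{p}\N$, and then proves the key reduction step: whenever $h(v)$ exceeds its least nonzero value there is a simple reflection $R_k$ with $h(R_k v) < h(v)$. Since $R(L)$ is, by definition, generated by the $i$-reflections in all norm $2$ roots, iterating the reduction shows it is generated by the $i$-reflections in roots of minimal height; a finer count using the finite geometry of the points and affine hyperplanes in $\F_2^4$ then shows this group equals $\langle R_1,\dots,R_{32}\rangle$, and in fact already $\langle R_1,\dots,R_{13}\rangle$ --- either by re-running the reduction with just those thirteen, or by exhibiting each remaining $R_k$ as an explicit word in them. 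I expect the reduction step to be the main obstacle: one must verify, for each residue class of norm $2$ roots modulo $p$, that some explicit $R_k$ strictly lowers the height, and it is precisely here that the incidence geometry in $\F_2^4$ does the work. This is the exact analogue of Conway's identification \cite{C:Aut26} of the Leech lattice with the Dynkin diagram of the reflection group of $\even_{25,1}$, and of the corresponding step for $\Gamma_2$ in \cite{TB:el}.

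Part (a) follows from part (b): once $R(L) = \langle R_1,\dots,R_{13}\rangle$, it suffices to bound $[P\op{Aut}(L):\Gamma]$, since $P\op{Aut}(L)$ is an arithmetic lattice in $PU(9,1)$. Being the image of a group generated by a conjugation-stable set of reflections, $\Gamma$ is normal in $P\op{Aut}(L)$; its mirror arrangement is locally finite, so $\Gamma$ acts simply transitively on the chambers, whence $P\op{Aut}(L) = \Gamma \cdot \op{Stab}_{P\op{Aut}(L)}(\mathcal C)$ for a fixed chamber $\mathcal C$. As in \cite{TB:el}, the height-reduction analysis makes $\mathcal C$ explicit near the cusp $z$ and identifies $\op{Stab}_{P\op{Aut}(L)}(\mathcal C)$ with the finite group $\op{Aut}(D)$; hence the index is finite and $\Gamma$ is arithmetic.

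For part (c), the existence of $\tau$ is automatic: the finite group $Q$ acts by isometries on the complete $\mathrm{CAT}(0)$ space $\B(L) \simeq \C H^9$ and therefore has a nonempty fixed-point set, which is a totally geodesic subspace. Computing the character of $L \otimes \C$ as a $Q$-module and checking that $\C\tau$ is the only $Q$-invariant line in $L\otimes\C$ spanned by a negative vector shows that this fixed set is the single point $\tau$. Since $Q$ permutes the thirty-two mirrors transitively and fixes $\tau$, these mirrors are equidistant from $\tau$. That they are precisely the mirrors closest to $\tau$ follows because bounding $\abs{\ip{v}{\tau}}$ confines a norm $2$ root $v$ to a compact region, hence to finitely many lattice vectors: enumerating the norm $2$ roots $v$ with $\abs{\ip{v}{\tau}}$ at most its common value on $r_1,\dots,r_{32}$ yields exactly the thirty-two mirrors $r_1^\perp,\dots,r_{32}^\perp$. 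Combined with the identification $Q \simeq \op{Aut}(D)$ already made, this completes part (c).
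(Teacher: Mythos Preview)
Your overall architecture resembles the paper's, but there are two genuine gaps.

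\textbf{Part (a) via chambers does not work.} The argument ``the mirror arrangement is locally finite, so $\Gamma$ acts simply transitively on the chambers, whence $P\op{Aut}(L)=\Gamma\cdot\op{Stab}(\mathcal C)$'' is imported from real reflection groups and fails here. The mirrors have complex codimension one, so their complement in $\C H^9$ is connected; there is no chamber decomposition on which $\Gamma$ could act simply transitively, and no a priori reason the stabiliser of any candidate fundamental domain is finite. The paper does not deduce (a) from (b). It proves arithmeticity first and independently: using the same cusp $\rho$ you introduce, it shows (via height reduction on primitive null vectors) that $R(L)$ is transitive on the Barnes--Wall cusps, then writes an arbitrary $g\in\op{Aut}(L)$ as (element of $R(L)$)$\cdot$(translation in a finite set $\mathbb T_*$)$\cdot$(element of $\op{Aut}(\Lambda)$)$\cdot i^m$. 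This gives the explicit bound $\lvert\Gamma\backslash P\op{Aut}(L)\rvert\le 2^9\lvert\op{Aut}(\bwg)\rvert$, with no chamber language at all.

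\textbf{Part (b): the height reduction is not by the thirty-two simple reflections.} You propose: for any root $v$ with $h(v)$ not minimal, some $R_k$ ($1\le k\le 32$) strictly lowers $h(v)$. This is not what the paper proves and is unlikely to be true as stated. The reduction with respect to the cusp uses the \emph{first and second shell} roots (infinitely many), and the reason both shells are needed is the key two-radius covering lemma for the Barnes--Wall lattice (balls of radius $\sqrt2$ around $\Lambda$ together with balls of radius $1$ around $p^{-1}\Lambda(2\bmod 4)$ cover $\Lambda\otimes\R$), which in turn rests on the covering radius of the Leech lattice. Your proposal omits this entirely. After that, the paper cuts down to a finite explicit set $S_0\cup S_1\cup S_2$ of roots (about $1.2\times10^5$ of them), and then runs a \emph{second} height reduction, now with respect to the interior point $\tau$, using the thirty-two $R_k$; this step is a computer verification, including an ad hoc perturbation for $401$ recalcitrant roots. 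The passage from $32$ to $13$ generators is then by explicit deflation relations in octagons of $D$, not by rerunning the cusp argument. So ``a finer count using the finite geometry'' is not a substitute for the covering lemma plus the computer search.

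Your treatment of (c) is in the right spirit (fixed point of a finite group on a $\mathrm{CAT}(0)$ space, uniqueness by representation theory, finite enumeration near $\tau$), and is close to what the paper does; the paper, however, constructs $\tau=e^{-\pi i/4}l_\infty-p_\infty$ and the extra involution $\sigma$ explicitly rather than via a character computation, and again the enumeration of mirrors with $d(\tau,r^\perp)\le d_0$ is done by computer after deriving explicit bounds on the coordinates of $r$.
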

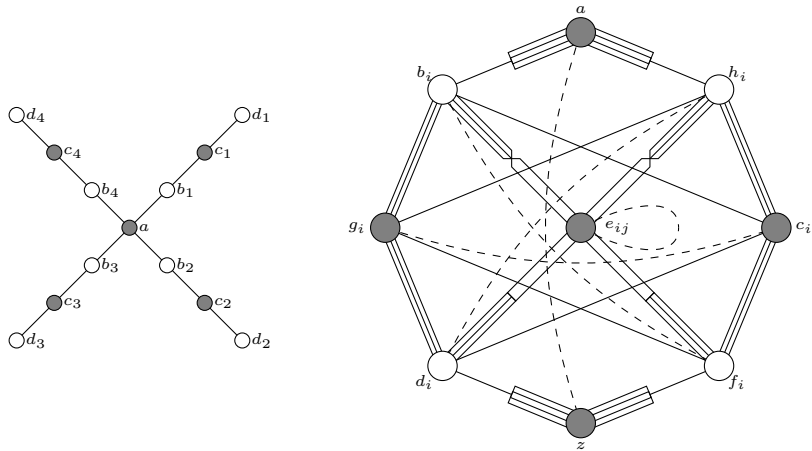
\begin{figure}
\centerline{
 \begin{tikzpicture}[scale=.5]
\begin{scope}[xshift=-6cm]
\draw (-3,-3) -- (3,3);
\draw (3,-3) -- (-3,3); 
\draw [fill = gray] (0, 0) circle [radius = .2];
\foreach \x in {1,...,4} 
{
\draw [rotate =  \x*360/4, fill = white] (1, 1) circle [radius = .2];
\draw [rotate =  \x*360/4, fill = gray] (2, 2) circle [radius = .2];
\draw [rotate =  \x*360/4, fill = white] (3, 3) circle [radius = .2];
 }
 \node [right] at ( 0, 0) {\tiny $a$};
 \node [right] at ( 1, 1) {\tiny $b_1$};
 \node [right] at ( 2, 2) {\tiny $c_1$};
 \node [right] at ( 3, 3) {\tiny $d_1$};
 \node [right] at ( 1,-1) {\tiny $b_2$};
 \node [right] at ( 2,-2) {\tiny $c_2$};
 \node [right] at ( 3,-3) {\tiny $d_2$};
 \node [right] at (-1,-1) {\tiny $b_3$};
 \node [right] at (-2,-2) {\tiny $c_3$};
 \node [right] at (-3,-3) {\tiny $d_3$};
 \node [right] at (-1, 1) {\tiny $b_4$};
 \node [right] at (-2, 2) {\tiny $c_4$};
 \node [right] at (-3, 3) {\tiny $d_4$};
 \end{scope}
 \begin{scope}[xshift=6cm, scale = 1.3]
 \draw (0,4.2) -- (1.485, 3.585) -- (1.3435, 3.2435) -- (0,3.8);
 \draw (0,4.07) -- (1.4390, 3.474) -- (1.390, 3.355) -- (0,3.93);
 \draw (1.414, 3.414) -- (2.828,2.828); 
 \draw (0,4.2) -- (-1.485, 3.585) -- (-1.3435, 3.2435) -- (0,3.8);
 \draw (0,4.07) -- (-1.4390, 3.474) -- (-1.390, 3.355) -- (0,3.93);
 \draw (-1.414, 3.414) -- (-2.828,2.828); 
 \draw (0,-4.2) -- (1.485,-3.585) -- (1.3435, -3.2435) -- (0,-3.8);
 \draw (0,-4.07) -- (1.4390, -3.474) -- (1.390, -3.355) -- (0,-3.93);
 \draw (1.414, -3.414) -- (2.828,-2.828); 
 \draw (0,-4.2) -- (-1.485,-3.585) -- (-1.3435, -3.2435) -- (-0,-3.8);
 \draw (-0,-4.07) -- (-1.4390, -3.474) -- (-1.390, -3.355) -- (-0,-3.93);
 \draw (-1.414, -3.414) -- (-2.828,-2.828); 
\draw (2.828, 2.828) -- (-4,0) ; \draw (-2.828,2.828) -- (4,0); 
 \draw  (2.828,-2.828) -- (-4,0) ; \draw (-2.828,-2.828) -- (4,0);
 \draw ( 4,0) -- ( 2.828, 2.828) [xshift=3pt] ( 4,0) -- ( 2.828, 2.828) [xshift=-6pt] ( 4,0) -- ( 2.828, 2.828);
 \draw ( 4,0) -- ( 2.828,-2.828) [xshift=3pt] ( 4,0) -- ( 2.828,-2.828) [xshift=-6pt] ( 4,0) -- ( 2.828,-2.828);
 \draw (-4,0) -- (-2.828, 2.828) [xshift=3pt] (-4,0) -- (-2.828, 2.828) [xshift=-6pt] (-4,0) -- (-2.828, 2.828);
 \draw (-4,0) -- (-2.828,-2.828) [xshift=3pt] (-4,0) -- (-2.828,-2.828) [xshift=-6pt] (-4,0) -- (-2.828,-2.828);
 \foreach \x in {0,...,1}
 {
 \draw [rotate = 360/8 + \x*90] (0, .12) -- (1.88,  .12) -- (2.12, -.12) -- (4, -.12);  
 \draw [rotate = 360/8 + \x*90] (0,-.12) -- (1.88, -.12) -- (2.12,  .12) -- (4,  .12);
 \draw [rotate = 360/8 + \x*90] (2,0) -- (4,0); 
 \draw [rotate = 5*360/8 + \x*90] (0, .12) -- (2,  .12) -- (2, -.12) -- (4, -.12);  
 \draw [rotate = 5*360/8 + \x*90] (0,-.12) -- (2, -.12) -- (2,  .12) -- (4,  .12);
 \draw [rotate = 5*360/8 + \x*90] (2,0) -- (4,0);  
 }
 \draw [dashed] ( 0,0) to [out=-30, in=-90]  (2,0);
 \draw [dashed] ( 0,0) to [out= 30, in= 90]  (2,0);
 \foreach \x in {1,...,4}
 {
 %\draw [dashed, rotate = \x*360/8] (0,4) arc [radius=4.3, start angle=70, end angle= 290];
 \draw [dashed, rotate = \x*360/8] ( 4,0) to [out=162, in=18]  (-4,0);
 }
 \foreach \x in {1,...,4} 
{
 \draw [fill = gray, rotate=\x*360/4] (4,0) circle [radius=.3cm];
 \draw [fill = white, rotate=\x*360/4] (2.828,2.828) circle [radius=.3cm];
 }
 \draw [fill = gray] (0,0) circle [radius=.3cm];
\node [right] at ( .3,0) {\tiny $e_{i j}$};
\node [above] at ( 0, 4.2) {\tiny $a$};
\node [below] at ( 0, -4.2) {\tiny $z$};
\node [above left] at (-2.818, 2.818) {\tiny $b_i$};
\node [right] at (4.2, 0) {\tiny $c_i$};
\node [below left] at (-2.818, -2.818) {\tiny $d_i$};
\node [below right] at (2.818, -2.818) {\tiny $f_i$};
\node [left] at (-4.2,0) {\tiny $g_i$};
\node [above right] at (2.818, 2.818) {\tiny $h_i$};
   \end{scope}
 \end{tikzpicture}
 }
 \caption{The $X_{3333}$ diagram on the left. A shorthand drawing of the
 $32$ node diagram $D$ on the right.
 In the $32$ node diagram $1 \leq i < j \leq 4$. So the node $c_i$ (resp. $e_{i j}$)
 stands for four (resp. six)
 nodes. 
 A solid (resp. dotted) edge between two vertices $x$ and $y$ indicates the 
 relation $ x y x = y x y$ (resp. $x y x y = y x y x $).
 No edge between $x$ and $y$ means $x y = y x$.
 The following shorthands are used:
The edge between $a$ and $b_i$ means
 that $a$ and $b_i$ are connected for all $i$.
 A single (resp. triple) edge between $b_i$ and $c_i$ (resp. $g_i$)
 means that $b_i$ and $c_j$ (resp. $g_j$) are connected if $i = j$ (resp. $i \neq j$).
 Notice that there are two kinds of edges out of $e_{i j}$. 
 The edge between $e_{i j}$ and $d_i$ (resp. $b_i$) means that $e_{i j}$ is connected
 to $d_k$ (resp. $b_k$) if $k \in \lbrace i , j \rbrace$ (resp. $k \notin \lbrace i , j \rbrace$).
 Finally, the dotted loop joining $e_{i j}$ to itself means that $e_{i j}$ is joined to
 $e_{ k l}$ if $ \lbrace i , j \rbrace \cap \lbrace k , l  \rbrace = \emptyset$.
 }
 \label{fig-X3333}
 \end{figure}
The configuration of the $32$ mirrors  closest to $\tau$
has appealing symmetry related to the geometry of the finite vector space
$\F_2^4$. To describe this, fix a point $a \in \F_2^4$.
For $u \in \F_2^4$, let $t_u : \F_2^4 \to \F_2^4$ be the translation $t_u(v) = u + v$.
Let $\cK_0$ be the set of hyperplanes in
$\F_2^4$ that do not contain $a$. Let $\cK$ be the set of translates of the hyperplanes
in $\cK_0$. So $\cK$ consists of $8$ homogeneous and $8$ affine hyperplanes in $\F_2^4$.
Let $D = \F_2^4 \cup \cK$. Note that each $t_u$ permutes $\F_2^4$ and permutes $\cK$
and thus defines a permutation of $D$.
The thirty-two $i$-reflections closest to $\tau$ can be labeled by $D$
such that the relations among these $i$-reflections are dictated by the configuration $D$. 
More precisely, let $d , d' \in D$ and let $R, R'$ be the corresponding $i$-reflections.
\begin{itemize}
\item If $\lbrace d, d' \rbrace$ is an incident pair of point and hyperplane, 
then $R R'  R = R' R R'$. 
This is denoted in the diagram $D$ (fig. \ref{fig-X3333}) by a solid edge joining $d$ to $d'$.
\item If  $d' = t_a(d)$,
then $R R' R R' =  R' R R' R$.
This is denoted in the diagram $D$ by a dotted edge joining $d$ to $d'$. 
\item Otherwise, $R R' = R' R$. 
\end{itemize}
We picture $D$ as a graph with two kinds of edges.
The subgroup $2^3 \colon L_3(2)$ in $Q$ is the stabilizer of $a$ in $L_4(2)$,
the $2^4$ corresponds to the translation action of $\F_2^4$ on itself,
and the extra $\Z/2$ is a symmetry that interchanges the
points in $\F_2^4$ and hyperplanes in $\cK$ (see \ref{t-configuration} for details).
The group $Q$
acts on the set $D$ preserving both kind of edges.
 We may think of $D$ as the Dynkin diagram for $R(L)$ and $Q$ as the group
 of diagram automorphisms. 
 \par
The proof showing that $\Gamma = P R(\cG_{9,1})$ is arithmetic is similar to the proof for 
$\Gamma_2 = P R(\cE_{13,1})$ in \cite{A:NC}
which in turn is adapted from an argument in \cite{C:Aut26}. The statements and proofs in this article
often closely parallel those in \cite{A:NC, TB:el, TB:ql}. We shall refrain from
mentioning them at every step, but a  couple of remarks comparing $\cG_{9,1}$ and $\cE_{13,1}$
are worthwhile.
Below $\op{Leech}^{\cE}$ denotes the complex Leech lattice (studied in detail
in \cite{W:complex}) scaled to have minimal norm $6$.
\par
The description 
$\cG_{9,1} 
 \simeq \bwg \oplus \cG_{1,1}$
is crucial in our proofs just like the description
$\cE_{13,1}
\simeq \op{Leech}^{\cE} \oplus \cE_{1,1}$
is crucial in the proofs in \cite{A:Leech, TB:el}.
This is because we use two key properties that are shared by the 
(real) Barnes-Wall and the (real) Leech lattice: they contain no norm $2$ vectors
and  they are very dense,
in fact the densest lattices known in the respective dimensions.
The absence of norm $2$ vector in $\bwg$ provides us with a
``Barnes-Wall cusp" at the boundary of $\B(L)$ such that no mirror passes through it.
These cusps play a key role in our arguments.
\par
 The results on 
$\cE_{13,1} $
use the fact that the covering radius of the Leech lattice is $\sqrt{2}$. 
In our results on
$\cG_{9,1}$,
the density of the Barnes-Wall lattice is used via lemma 6.11 of \cite{A:NC} which is a special 
case of results in \cite{REB:llll}. This lemma gives
a covering of the underlying real vector space of the Barnes-Wall lattice using balls of two sizes.
It is curious to note that this lemma does not use the covering radius of the Barnes-Wall lattice.
Rather, it uses the covering radius of the Leech lattice and an embedding of the
Barnes-Wall lattice in the Leech lattice.
Just like the results in \cite{A:NC, TB:el, AB:braidlike}, the results of this paper would all fail
to hold if the covering radius of the
Leech lattice was any bigger than $\sqrt{2}$. 
%
%
%*******************************************************************************************************
%
\section{Preliminaries}
\begin{definition}[Gaussian lattices]
Let $\cG = \Z[i]$ be the ring of Gaussian integers. Let $p = (1 + i)$. 
Let $K$ be a free $\cG$-module of finite rank with a $\Q[i]$-valued Hermitian
form $\ip{\;}{\;} : K \times K \to \Q[i]$. 
Hermitian forms are always assumed to be linear in second variable. 
We shall always identify $K$ inside the $\Q[i]$-vector space $K \otimes_{\cG} \Q[i]$
and further, inside the complex vector space $K \otimes_{\cG} \C$. 
The Hermitian form linearly extends to these vector spaces.
For $v \in K \otimes_{\cG} \C$, write $v^2 = \ip{v}{v}$.
We say that $v^2$ is the {\it norm} of $v$.
A nonzero vector of norm zero is called a {\it null vector}. 
\par
Let $A \subseteq K \otimes_{\cG} \C$ and $m \in \R$.
It will be convenient to use the notation:
\begin{equation*}
A(m) = \lbrace a \in A \colon a^2 = m \rbrace
\text{\; and \;} 
A(\leq m) = \lbrace a \in A \colon a^2 \leq m \rbrace.
\end{equation*}
%Also, for $r, n \in \Z$, $n \geq 1$,  we shall write $A( r \bmod n) = \cup_{ k \in \Z} A(r + n k)$.
Let $A^{\bot} = \lbrace  v \in K \otimes_{\cG} \C \colon \ip{v}{a} = 0 \; \forall a \in A \rbrace$.
The {\it radical} of $K$ is defined as $\op{rad}(K) = K^{\bot} \cap K$.
The Hermitian form is nonsingular
if $\op{rad}(K) = 0$.
If $\op{rad}(K) = 0$, then $K$ is called a {\it $\cG$-lattice} or a {\it Gaussian lattice}. 
If $\op{rad}(K) \neq 0$, then $K$ is  called a {\it singular $\cG$-lattice}. 
Say that $K$ is {\it integral} if the Hermitian form takes values in $\cG$.
Say that $K$ is {\it Lorentzian} if it has signature
$(n,1)$. 
\par
Let $K$ be a $\cG$--lattice. Define 
$K^{\vee} = \lbrace v \in K \otimes_{\cG} \Q[i] \colon \ip{v}{K} \subseteq \cG \rbrace$.
Then $K^{\vee}$ is a $\cG$-lattice called the {\it dual lattice} of $K$.
Note that $K$ is integral if and only if $K \subseteq K^{\vee}$.
%If $K$ is integral, the finite abelian group $K^{\vee}/K$ is called the {\it discriminant group} of $K$.
Let $l$ be a prime in $\cG$.
A $\cG$-lattice $K$ is called {\it $l$-modular}
if $K^{\vee} = l^{-1} K$. 
Clearly if $K_1$ and $K_2$ are $l$-modular, then so is $K_1 \oplus K_2$.
Let $K_{\Z}$ denote the underlying $\Z$--lattice of $K$.
This means that $K_{\Z}$ is the underlying $\Z$-module of $K$ with the
 bilinear form $\op{Re} \ip{\;}{\;}$.
Note that $K$ is an integral $\cG$-lattice if and only if $K_{\Z}$ is an integral
$\Z$-lattice and $(K^{\vee})_{\Z} = (K_{\Z})^{\vee}$.
\end{definition}
\begin{topic}{\bf The $D_4^{\cG}$ lattice:}
 Let $D_{2n}^{\cG}$ be the sub-lattice of $\cG^n$
consisting of all $(x_1, \dotsb, x_n)$ in $\cG^n $ such that 
$(x_1 + \dotsb + x_n) \equiv 0 \bmod p$ with the standard positive
definite Hermitian form
\begin{equation*}
\ip{x}{x'} = \bar{x}_1x_1' + \dotsb +  \bar{x}_n x_n'. 
\end{equation*}
The underlying $\Z$-module of $D_{2n}^{\cG}$ with 
the inner product $\op{Re} \ip{x}{y}$ is the root lattice $D_{2n}$, where 
\begin{align*}
D_{n} &= \lbrace (x_1, \dotsb, x_n) \in \Z^n \colon (x_1 + \dotsb + x_n) \equiv 0 \bmod 2 \rbrace.
\end{align*}
Note  that  $D_4^{\cG}$ is $p$-modular but 
 $D_n^{\cG}$ is not $p$-modular for $n > 2$.
 A $\cG$-basis for $D_4^{\cG}$ is $v_1 = (1,1)$ and $v_2 = (0,\bar{p})$.
The discriminant group of $D_4^{\cG}$ is
 $(D_4^{\cG})^{\vee} / D_4^{\cG} = p^{-1}D_4^{\cG}/ D_4^{\cG}  \simeq (\Z/2)^2$.
Coset representatives for $ p^{-1}D_4^{\cG}/ D_4^{\cG}$ can be chosen to be 
 $\lbrace (0,0), v_1/\bar{p},  v_2/\bar{p}, (v_2 - v_1)/\bar{p} \rbrace$.
  \end{topic}
 \begin{topic}{\bf The Barnes-Wall lattice over Hurwitz quaternions and Gaussian integers: }
Express real quaternions in the form $(x + y j)$ where $x, y \in \C$.
The ring $\mathcal{H}$ of Hurwitz integers consists of all $(x + y j)$ such that 
$(x , y) \in  \cG^2$ or $(x + \tfrac{p}{2}, y + \tfrac{p}{2} ) \in \cG^2 $. 
Note that $\cG$ is a subring of $\mathcal{H}$.
The map  $(x + y j) \mapsto (x,y)$
defines an isomorphism $\mathcal{H} \simeq p^{-1} D_4$ as 
$\cG$-modules.
Define
\begin{align*}
\bwg
& = \lbrace (x_1,\dotsb, x_4) \colon 
 x_j \in p^{-1} D_4^{\cG},  \; x_j \equiv x_k \bmod D_4^{\cG} \; \forall j, k, 
 \; \; \sum_{j} x_j \in p D_4^{\cG}
   \rbrace
\end{align*}
Allcock \cite{A:NC} describes a four dimensional Hurwitz lattice whose real form (appropriately scaled)
is the Barnes-Wall lattice.
It is immediate that $\bwg$, as defined above,
is the Gaussian form of this Hurwitz Lattice with 
the norms scaled by $\sqrt{2}$. So the real form of $\bwg$
is the usual rank sixteen Barnes-Wall lattice of minimum norm $4$.
For more information on the Barnes-Wall lattice see \cite{CS:SPLAG, VS:genus, NRS:simple}.
The sixteen dimensional Barnes-Wall lattice is part of a family of lattices studied widely in the coding
theory literature: see \cite{GP:list, NRS:simple} and the references in there. 
An alternative quick definition of the $\cG$-lattice $\bwg$ is the $\cG$-span of 
the rows of $\bigl( \begin{smallmatrix} 1 & 1 \\ 0 & p \end{smallmatrix} \bigr)^{\otimes 4}$ \cite{GP:list}.
It is straightforward to verify the equivalence of the two definitions.
\end{topic}
\begin{topic}{\bf A common over-lattice of $4D_4^{\cG}$ and $\bwg$:}
 Let $M_{16}^{\cG}$ be the rank $8$ Gaussian lattice
 \begin{align*}
M_{16}^{\cG}  
=  \lbrace (x_1, x_2, x_3, x_4) : x_j \in  p^{-1} D_4^{\cG} \colon x_j \equiv x_k \bmod D_4^{\cG} 
 \; \forall \;  j, k \rbrace .
 \end{align*}
Then $M_{16}^{\cG}$ is an integral Gaussian lattice
 of minimum norm $2$ that contains both $4D_4^{\cG}$ and $\bwg$. 
 It is easy to verify the following inclusions among lattices with the indices
indicated next to the edges:
\begin{equation*}
\xymatrix{
(4D_4^{\cG})^{\vee} \ar@{-}[dr]_{4} & & (\bwg)^{\vee}  \ar@{-}[dl]^4\\
 & (M_{16}^{\cG})^{\vee} \ar@{-}[d]_{16} & \\
 &  M_{16}^{\cG}  & \\
4D_4^{\cG} \ar@{-}[ur]^{4} & & \bwg \ar@{-}[ul]_{4}
}
\end{equation*}
In particular $\abs{(\bwg)^{\vee} / \bwg} = 2^8$.
On the other hand, from the definition of $\bwg$ one verifies that
$ (\bwg)^{\vee} \supseteq p^{-1} \bwg$.
So 
\begin{equation*}
2^8 = \abs{ (\bwg)^{\vee}/\bwg } \geq 
\abs{ p^{-1} \bwg/\bwg } = \abs{ p^{-1} \cG^8/ \cG^8} = 2^8
\end{equation*}
It follows that equality must hold everywhere and that
$ (\bwg)^{\vee} = p^{-1} \bwg$. In particular, all vectors in $\bwg$ have even norm.
\par
One crucial property of $\bwg$ is that it does not have any norm $2$ vector, 
so it has minimum norm $4$.
This can be seen quickly from our definition as follows:
Note that  $ p^{-1} D_4^{\cG}$ has minimum norm $1$. 
Take $x \in \bwg(2)$. Write $x = (x_1, x_2, x_3, x_4)$
with each $x_j \in p^{-1} D_4^{\cG}$.
Then  $\sum_{j = 1}^4 x_j^2  = 2$ implies that
at least $2$ of the $x_j$'s must be $0$. Since the $x_j$'s are all congruent modulo $D_4^{\cG}$, it
follows that $x_j \in D_4^{\cG}$ for all $j$. Since $D_4^{\cG}$ has minimum norm $2$, it follows
that there exists a $k \in \lbrace 1, 2, 3, 4 \rbrace$ such that $x_j = 0$ for $j \neq k$.
But now $x_k =\sum_j x_j  \in p D_4^{\cG}$,
so $x^2 = x_k^2$ has norm at least $4$, which is a contradiction.
\end{topic}
The lemma below is taken from \cite{A:NC} and is a special case of the results in \cite{REB:llll} where
it was used to find interesting reflection groups in real hyperbolic space $\R H^{17}$
(see theorem 3.1 of \cite{REB:llll} and the examples following theorem 3.3 on page 232).
As mentioned in the introduction,
the proof depends on the covering radius of the Leech lattice.
We quote it below for convenience:
\begin{lemma}[same as lemma 6.11 of \cite{A:NC}]
\label{l-covering-lemma}
The real vector space $\bwg \otimes_{\Z} \R$ is covered by the closed balls of radius 
$\sqrt{2}$ around the vectors of $\bwg$ together with closed balls of radius $1$ around the vectors
$p^{-1} v$ with $v \in \bwg$ and  $v^2 \equiv 2 \bmod 4$.
\end{lemma}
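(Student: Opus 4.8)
The plan is to deduce this lemma from the known value $\sqrt{2}$ of the covering radius of the Leech lattice, following \cite{REB:llll} (cf.\ \cite{A:NC}); notably the covering radius of $\bwg$ itself is never used. Fix the Leech lattice $\Lambda$ --- an even unimodular $\Z$-lattice of rank $24$ and minimal norm $4$, with covering radius $\sqrt{2}$ --- and recall the standard fact (laminated lattices, see \cite{CS:SPLAG, REB:llll}) that the real form of $\bwg$ is an orthogonal cross-section of $\Lambda$: there is a $16$-dimensional subspace $W \subseteq \Lambda \otimes_{\Z} \R$ with $\Lambda \cap W \cong \bwg \otimes_{\Z} \R$. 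Let $\pi$ and $\pi^{\perp}$ be the orthogonal projections of $\Lambda \otimes_{\Z} \R$ onto $W$ and $W^{\perp}$. Because $\Lambda$ is integral, $\pi(\Lambda) \subseteq (\Lambda \cap W)^{\vee}$, which by the computation already made equals $(\bwg)^{\vee} = p^{-1}\bwg$; hence every vector of $\pi(\Lambda)$ has integer norm and $p$ carries it into $\bwg$. Moreover $\lambda^2 \in 2\Z$ for all $\lambda \in \Lambda$, so $\pi^{\perp}(\lambda)^2 = \lambda^2 - \pi(\lambda)^2$ is an integer congruent to $\pi(\lambda)^2$ modulo $2$.

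Given an arbitrary $x \in W = \bwg \otimes_{\Z} \R$, the covering radius of $\Lambda$ yields $\lambda \in \Lambda$ with $\abs{x-\lambda}^2 \le 2$. Since $x \in W$, Pythagoras gives
\[
\abs{x-\lambda}^2 = \abs{x - \pi(\lambda)}^2 + \pi^{\perp}(\lambda)^2 ,
\]
so $\pi^{\perp}(\lambda)^2$ is a non-negative integer at most $2$, hence lies in $\{0,1,2\}$, and $\abs{x-\pi(\lambda)}^2 \le 2 - \pi^{\perp}(\lambda)^2$. The three cases are: (i) if $\pi^{\perp}(\lambda)^2 = 0$ then $\lambda \in \Lambda \cap W = \bwg$ and $x$ lies in the closed ball of radius $\sqrt{2}$ about the vector $\lambda$ of $\bwg$; (ii) if $\pi^{\perp}(\lambda)^2 = 1$ then $\pi(\lambda)^2$ is odd, so putting $v = p\,\pi(\lambda) \in \bwg$ we get $v^2 = 2\,\pi(\lambda)^2 \equiv 2 \bmod 4$ and $\abs{x - p^{-1}v}^2 = \abs{x-\pi(\lambda)}^2 \le 1$, so $x$ lies in the closed ball of radius $1$ about $p^{-1}v$; (iii) if $\pi^{\perp}(\lambda)^2 = 2$ then $\abs{x-\pi(\lambda)}^2 \le 0$, so $x = \pi(\lambda) = p^{-1}v$ with $v = px \in \bwg$ and $v^2 = 2x^2 \equiv 0 \bmod 4$.

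The main obstacle --- the only step requiring genuine work with $\bwg$ --- is case (iii): one must show that every point $p^{-1}v$ with $v \in \bwg$ and $v^2 \equiv 0 \bmod 4$ already lies within distance $\sqrt{2}$ of $\bwg$, so that case (iii) collapses into case (i). Equivalently, every nonzero coset of $\bwg$ in $(\bwg)^{\vee} = p^{-1}\bwg$ that contains a vector of even norm has minimal norm $2$ (such a minimum is automatically even and positive, hence at least $2$). I would prove this by exhibiting a norm-$2$ vector in each such coset: the $4320$ minimal (norm $4$) vectors $v$ of $\bwg$ give $4320$ distinct norm-$2$ vectors $p^{-1}v$ of $(\bwg)^{\vee}$, none in $\bwg$; these are permuted by $\Aut(\bwg)$, which acts on the discriminant group $(\bwg)^{\vee}/\bwg$ (of order $2^8$) through the orthogonal group $O_8^{+}(2)$ of its discriminant quadratic form, transitively on the $135$ nonzero isotropic vectors. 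The $135$ nonzero ``even'' cosets are exactly these nonzero isotropic vectors, so the $4320$ vectors $p^{-1}v$ distribute $4320/135 = 32$ to a coset; in particular each even coset is hit, which is what is needed. Carrying out this coset bookkeeping in detail --- or, equivalently, pinning down the orthogonal complement of $\bwg$ in $\Lambda$ and matching discriminant forms --- is the delicate part; everything else is forced by integrality together with the equality of the Leech covering radius with $\sqrt{2}$. Alternatively, one may simply invoke Theorem~3.1 of \cite{REB:llll}, of which this lemma is a special case.
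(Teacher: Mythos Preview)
The paper does not supply its own proof of this lemma; it explicitly quotes the result from Allcock \cite{A:NC}, remarking that it is a special case of Theorem~3.1 of Borcherds \cite{REB:llll} and that the argument rests on the covering radius of the Leech lattice. Your write-up reconstructs precisely that Borcherds--Allcock argument: embed the real form of $\bwg$ as an orthogonal cross-section of the Leech lattice, use the Leech covering radius $\sqrt{2}$ to find a nearby Leech vector, project onto the $\bwg$ subspace, and split into cases according to the integer norm $\pi^{\perp}(\lambda)^2 \in \{0,1,2\}$. So your approach and the paper's (cited) approach coincide.

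Your handling of case~(iii) is correct. The equivariance argument is sound: $\Aut(\bwg)$ acts transitively on the $4320$ norm-$4$ vectors and, through $O_8^+(2)$, transitively on the $135$ nonzero isotropic classes of the discriminant form, so the equivariant map from the former to the latter is onto with fibers of size $32$. In Borcherds' original treatment the same conclusion is reached by identifying the orthogonal complement of $\bwg$ in the Leech lattice (a rescaled $E_8$) and reading off the glue vectors, which amounts to the same coset bookkeeping. Your closing remark---that one may simply invoke Theorem~3.1 of \cite{REB:llll}---is exactly what the paper does.
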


\begin{definition}[roots, reflection groups]
Let $V$ be  a complex vector space with a
Hermitian form $\ip{\;}{\;}$.
Let $v \in V$ be a vector of nonzero norm. 
A {\it complex reflection} $R$ in $v$ is a linear automorphism of $V$
of finite order that point-wise fixes $v^{\bot}$. If $R$ has order $n$, then
it follows that $R(v)= \xi v$ where $\xi$ is a primitive $n$-th root of unity.
We shall write $R = R_v^{\xi}$. One has
\begin{equation*}
R_v^{\xi}(x) = x - (1 - \xi) \frac{\ip{v}{x}}{ v^{2}} v. 
\end{equation*}
Let $K$ be a $\cG$--lattice. A {\it root} of $K$ means a primitive vector $v$ of $K$
of positive norm such that
$R_v^{\xi} \in \op{Aut}(K)$ for some non-trivial root of unity $\xi$. 
The {\it reflection group} of $K$, denoted $R(K)$, is the
subgroup of $\op{Aut}(K)$ generated by the reflections in the roots
of $K$. Write $R_v = R_v^{i}$. 
Let $s$ and $t$ be two norm $2$ vectors of $K$. One verifies that
$R_s$ and $R_t$ commutes (reps. braids) if  $\ip{s}{t} = 0$ 
(resp. $\abs{\ip{s}{t} } = \sqrt{2}$). Further, if $\abs{\ip{s}{t}} = 2$, then
$ i R_s R_t R_s (t) = t$, and hence
$R_s R_t R_s R_t = R_t R_s R_t R_s$.
 \end{definition}
\begin{topic}{\bf Reflection groups of $p$-modular lattices: }
Assume $K$ is a $p$-modular $\cG$-lattice. Then the minimal norm of
$K$ is at least $2$.
From the formula for complex reflection we find that $K(2)$  is the set of 
roots of $K$ and the reflections in $R(K)$ are precisely the order $4$
and order $2$ reflections in these roots. 
\end{topic}
\begin{topic}{\bf The reflection group of $D_4^{\cG}$:} 
The lattice $D_4^{\cG}$ has six roots counted up to roots of unity. These are
$(p,0)$, $(0, p)$, $(1, i^r)$, for $r = 0, 1, 2, 3$. If $s$ and $t$ are any two
non-proportional and non-orthogonal roots of $D_4^{\cG}$, then 
the finite complex reflection group $R(D_4^{\cG})$ is generated by the $i$-reflections
$R_s$ and $R_t$. These two reflections obey the braiding relation
$R_s R_t R_r= R_t R_s R_t$. The relations $R_s^4 = R_t^4 = 1$ and the braiding relation
are sufficient to give a presentation of
$R(D_4^{\cG})$. 
This group is called $G_8$ in the table of finite complex reflection groups given in 
\cite{BMR:CR}.
\end{topic}
\begin{topic}{\bf Complex hyperbolic space: }
\label{t-complex-hyperbolic-space}
Let $V$ be a complex vector space of dimension $(n+1)$ with the standard non-degenerate Hermitian
form of signature $(n,1)$. Let $\B(V)$ be the set of one dimensional negative definite subspaces
of $V$. This is an open subset of the projective space $P(V)$. If $L$ is a Lorentzian $\cG$--lattice,
we write $\B(L) = \B(L \otimes_{\cG} \C)$.
\par
The group of isometries $V$ is $U(n,1)$ and $\B(V)$ is a concrete model for
the corresponding Hermitian symmetric space,
sometimes called the complex hyperbolic space
and denoted by $\C H^n$.
 Up to scaling, there is
a unique $PU(n,1)$ invariant metric on $\C H^n$ called the Bergman metric.
We shall only need some facts about the associated distance function.
\par
A negative norm vector $v$ in $V$ determines a point $\C v$ in $\B(V)$.
A positive norm vector $r$ in $V$ determines a totally geodesic hyperplane $\B(r^{\bot})$
in $\B(V)$. 
For simplicity we shall write $v$ instead of $\C v$ and $r^{\bot}$
instead of $\B(r^{\bot})$ when there is no chance of confusion.
\par
Let $u , v$ be two negative norm vectors in $V$. The distance between the corresponding
points in the complex hyperbolic space is
\begin{equation*}
d( u, v) = \cosh^{-1} \sqrt{ \tfrac{ \abs{\ip{u}{v}}^2 }{ u^2 v^2 } }.
\end{equation*}
Let $r$ be a negative norm vector in $V$. Then
\begin{equation*}
d(r^{\bot}, v) = \sinh^{-1} \sqrt{ \tfrac{ \abs{\ip{r}{v}}^2  }{- r^2 v^2 } }.
\end{equation*}
Let $r, s$ be two negative norm vectors in $V$. If $\C r + \C s$ is positive definite then the hyperplanes
$\B(r^{\bot})$ and $\B(s^{\bot})$ meet in $\B(L)$. Otherwise, one has
\begin{equation*}
d(r^{\bot}, s^{\bot})=  \cosh^{-1} \sqrt{ \tfrac{ \abs{\ip{r}{s} }^2 }{ r^2 s^2 } }.
\end{equation*}
Our distance function differs from the ones in \cite{G:GHG} by a factor of $2$. 
This is not an issue because we only use these formulas only to compare distances.
Let $v \in V$ be a positive norm vector. Let $\xi$ be a primitive $k$-th root of unity
for some $k > 1$.
Unless it is necessary, we shall not distinguish between 
 $R_v^{\xi}$ and its image in $PU(n,1)$ 
 and refer to either as a $\xi$-reflection in $s$. This reflection is an isometry of $\B(V)$
 that point-wise fixes the totally geodesic hyperplane $v^{\bot}$ (called the {\it mirror} of reflection) and
 acts as anti-clockwise rotation of angle $2 \pi/k$ in the normal bundle to the mirror.
\end{topic}
\begin{definition}[horocyclic distance] 
\label{def-horocyclic-distance}
Let $V$ be as in \ref{t-complex-hyperbolic-space}.
Let $z$ be a null vector in $V$.
 If $v$ is a negative norm vector in $V$, define
\begin{equation*}
d_{z}(v)  =\tfrac{1}{2} \log (\abs{\ip{z}{v}}^2/(-v^2)).
\end{equation*}
Note that $d_z$ determines a function on the complex hyperbolic space $\bb{B}(V)$.
We denote this function also by $d_z$.	
The null vector $z$ determines a point $\C z$ in the boundary $\partial \B(V)$ of $\B(V)$.
As before, we write $z$ instead of $\C z$ if there is no chance of confusion.
We say that $d_z(v)$ is the {\it horocyclic distance} between $z$ and $v$.
This terminology is justified by the lemma below. We include a proof
because we could not find a convenient reference.
\end{definition}
\begin{lemma}[ideal triangle inequality]
(a) Let $x, y$ be negative norm vectors in $V$. Then one has $\abs{d_z(x) - d_z(y)} \leq d(x, y)$. 
(b) The equality $d_z(x) - d_z(y) = d(x, y)$ holds if and only if $y$ lies on the
geodesic ray joining $x$ and $z$.
\label{l-iti}
\end{lemma}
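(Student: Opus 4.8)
The plan is to exploit scale-invariance to put $x,y,z$ into a normal form, and then read both statements off the sign of a single $3\times 3$ Gram determinant. Note first that $d_z(x)-d_z(y)$, the distance $d(x,y)$, and the geodesic ray joining $\C x$ to $\C z$ all depend only on the lines $\C x,\C y,\C z$, and that replacing $z$ by $\lambda z$ shifts $d_z$ by the constant $\log\abs{\lambda}$, which cancels in $d_z(x)-d_z(y)$. So I may rescale so that $x^2=y^2=-1$ and $\ip{z}{x}=-1$, and rotate the phase of $y$ so that $c:=\ip{x}{y}$ is real and $\ge 0$. Since the form has signature $(n,1)$, the plane $\C x+\C y$ cannot be negative definite, which forces $c\ge 1$ (with $c=1$ only when $\C x=\C y$); and $\ip{z}{y}\ne 0$, because the orthogonal complement of the null vector $z$ is positive semidefinite and so cannot contain the negative vector $y$. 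Write $w=\ip{z}{y}$ and $Q=\abs{w}^2>0$, so that $d_z(x)=0$, $d_z(y)=\tfrac12\log Q$, and $d(x,y)=\cosh^{-1}c$.

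In this normalization the Gram matrix of $(x,y,z)$ is $G=\bigl(\begin{smallmatrix}-1&c&-1\\ c&-1&\bar w\\ -1&w&0\end{smallmatrix}\bigr)$, and a one-line cofactor expansion gives $\det G=1+Q-2c\,\re w$. Being a Gram matrix of vectors in a space of signature $(n,1)$, $G$ has at most one negative eigenvalue; it has at least one since $x^2=-1<0$; hence $\det G\le 0$, i.e.\ $2c\,\re w\ge 1+Q$. Combined with $\re w\le\abs{w}=\sqrt Q$ this yields
\[
c\ \ge\ \frac{1+Q}{2\sqrt Q}\ =\ \cosh\!\bigl(\tfrac12\log Q\bigr),
\]
that is, $\cosh d(x,y)\ge\cosh\abs{d_z(x)-d_z(y)}$; since $\cosh$ is increasing on $[0,\infty)$ this is exactly part (a).

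For part (b) I would parametrize the geodesic ray from $\C x$ to $\C z$ by $v(t)=e^{-t}x+(\sinh t)\,z$, $t\ge 0$; one checks $v(t)^2=-1$ and $\ip{z}{v(t)}=-e^{-t}$, so $d_z(v(t))=-t=-d(x,v(t))$, which gives the ``if'' direction and shows the equality is attained. Conversely, if $d_z(x)-d_z(y)=d(x,y)$ then $d_z(y)\le 0$ (so $Q\le 1$) and both inequalities above are equalities, forcing $\det G=0$ and $w=\sqrt Q\in\R_{>0}$. Since $\C x+\C y$ is a nondegenerate hyperbolic plane, its span with $z$ cannot be degenerate (a nonzero radical vector would be null and orthogonal to $x$, which is impossible), so $\det G=0$ gives $z\in\C x+\C y$; writing $z=\alpha x+\beta y$, the conditions $\ip{z}{x}=-1$ and $\ip{z}{y}=w$ being real force $\alpha,\beta\in\R$, and $z^2=0$ with $Q\le 1$ pin down $w=c-\sqrt{c^2-1}$ and hence $\alpha=1-c/\sqrt{c^2-1}<0$. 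Then $y=\beta^{-1}(z-\alpha x)$ is proportional to $v(t)$ for the unique $t>0$ with $e^{-t}/\sinh t=-\alpha$, so $\C y$ lies on the ray; the degenerate case $c=1$ has $\C y=\C x=\C v(0)$. I expect the normalization and the inequality to be routine; the fiddly point will be the bookkeeping in the equality case, in particular selecting the correct root $w=c-\sqrt{c^2-1}$ and verifying $\alpha<0$, so that the recovered $y$ lies on the forward ray from $x$ toward $z$ and not on its continuation past $x$.
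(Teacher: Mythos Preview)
Your argument for part (a) is correct and is essentially the paper's own proof: both normalize to $x^2=y^2=-1$, compute the $3\times 3$ Gram determinant of $z,x,y$, use that it is $\le 0$ in signature $(n,1)$, and conclude $\cosh d(x,y)\ge \cosh\abs{d_z(x)-d_z(y)}$. Your extra normalization $\ip{z}{x}=-1$ and $c\in\R_{\ge 0}$ just streamlines the bookkeeping.

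For part (b) you in fact do \emph{more} than the paper. The paper proves only the ``if'' direction (via the parametrization $\C y=\C(x+tz)$, $t\ge 0$, equivalent to your $v(t)=e^{-t}x+(\sinh t)z$) and then waves at the converse, saying it ``follows from uniqueness of geodesic which is a consequence of negative curvature'' and is omitted as unnecessary for the application. Your converse is a genuine self-contained argument: you trace equality back to $\det G=0$ and $w\in\R_{>0}$, argue that a $3$-dimensional span containing the negative vector $x$ cannot be degenerate (any radical vector would be null and lie in the positive-definite hyperplane $x^\perp$), deduce $z\in\C x+\C y$, and then solve explicitly for $\alpha,\beta\in\R$ to place $\C y$ on the forward ray. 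The step selecting $w=c-\sqrt{c^2-1}$ from $Q\le 1$ and the sign check $\alpha=1-c/\sqrt{c^2-1}<0$ are exactly the points that need care, and you handle them correctly; with $\beta=-1/\sqrt{c^2-1}<0$ one gets $-y=(c-\sqrt{c^2-1})x+\sqrt{c^2-1}\,z$, so $\C y=\C v(t)$ for the unique $t>0$ with $e^{-t}/\sinh t=-\alpha$, as you claim. So your treatment of (b) is strictly more complete than the paper's.
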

\begin{proof}
(a) 
Let $\alpha = \ip{z}{x} $, $\beta = \ip{y}{z}$ and $\gamma = \ip{x}{y}$.
By changing $x, y$ by units if necessary, we may assume, without loss, that,
$\abs{x}^2 = \abs{y}^2 = -1$.
If $z, x, y$ are linearly independent then their span has signature $(2,1)$, so
$\op{det}(\op{gram}(z, x, y)) < 0$, otherwise $\op{det}(\op{gram}(z, x, y)) = 0$. So we have
\begin{align*}
0 
 \geq \op{det}(\op{gram}(z, x, y)) 
= \op{det} \Bigl(  \begin{smallmatrix}  0 & \alpha & \bar{\beta} \\ \bar{\alpha} & -1 & \gamma \\ \beta & \bar{\gamma} & -1 
\end{smallmatrix}  \Bigr) 
&= \abs{\alpha}^2 + \abs{\beta}^2 +2 \op{Re}(\alpha \beta \gamma) \\
&\geq \abs{\alpha}^2 + \abs{\beta}^2 -2 \abs{\alpha \beta \gamma}. 
\end{align*}
It follows that 
\begin{align*}
\cosh d(x,y)  
= \abs{\gamma} 
 \geq  \tfrac{1}{2} ( \tfrac{\abs{\alpha}}{\abs{\beta}} + \tfrac{\abs{\beta}}{\abs{\alpha}} ) 
&= \tfrac{1}{2} ( e^{d_{z}( x) - d_{z}( y)}  +  e^{d_{z}( y) - d_{z}( x)} )  \\
&= \cosh \abs{ d_{z}( x) - d_{z}( y) } .
\end{align*}
Since $( t \mapsto \cosh t )$ is strictly increasing for $ t \in [0, \infty)$, part (a) follows.
\par
(b) Suppose $y$ lies on the geodesic ray joining $z$ and $x$,
Then $z, x, y$ are linearly dependent. So the calculation in part (a) show that
$d(x,y) = \abs{d_{z}(x) - d_{z}( y)}$. 
Now, without loss, assume $\ip{z}{x}$ is a negative real number and $\abs{x}^2 = -1$.
If $y$ is on the geodesic ray joining $x$ and $z$, then
$\C y = \C(x + t z)$ for some $t \geq 0$. 
So 
\begin{equation*}
e^{2 d_{z}(y)} = \tfrac{\abs{\ip{x}{z}}^2 }{- (x + t z)^2 } 
=  \tfrac{\abs{\ip{x}{z}}^2 }{1-  2t \ip{x}{z} }  
< \abs{\ip{x}{z}}^2 = e^{2 d_{z}(x)}.
\end{equation*}
So $d_{z}( x) > d_{z}( y)$ and hence $d(x,y) = d_{z}( x) - d_{z}( y)$.
The other implication follows from uniqueness of geodesic
which is a consequence of negative curvature. We shall skip the details since we do
not need this for our application.
 \end{proof}
\begin{definition}[Horoballs] Let $z$ be a null vector in $V$.
Let $c$ be a positive real number. A subset $B \subseteq \B(V)$ of the form
$B = \lbrace v \colon d_z(v) < c \rbrace$ is called an open {\it horoball} around $z$.
Similarly define a closed horoballs. The boundary of a horoball around $z$ is called a 
{\it horosphere} around $z$.
Pick $v \in \bb{B}(V) -  B$.
Let $p$ be the point where the geodesic ray joining $v$ and $z$ intersects $\partial B$.
Then, one verifies that $p$ is the unique point of the closed horoball $\bar{B}$ that is closest to $v$, that is
\begin{equation*}
d(p, v) =d(B, v).
\end{equation*}
In other words, $p$ is the {\it projection} of $v$ on $B$.
Lemma \ref{l-iti} implies that
\begin{equation*}
d_z(v)   = c + d(B, v) 
\end{equation*}
Let $\zeta \in \partial \bb{B}(V)$ be the point determined by $z$.
We shall say that $v_1$ is closer to $\zeta$ than $v_2$ in horocyclic distance
if and only if $d_z(v_1) < d_z(v_2)$. If we scale $z$, then both sides
of the inequality gets multiplied by the same positive factor; so
this notion does not depend on the
choice of $z$. Another way to see this is to note that
$v_1$ is closer to $\zeta$ than $v_2$ if and only if 
$v_1$ is closer to $B$ than $v_2$, where $B$ is any small horoball around $\zeta$
that misses $v_1$ and $v_2$.
\end{definition}
\section{Reflection groups of $p$-modular $\cG$-lattices: height reduction}
In this section we prove some results about the reflection group of a general $p$-modular
Lorentzian $\cG$-lattice $L$. A null vector $z \in L$ or the point of $\B(L)$ determined by
$z$ is called a {\it cusp} (of $R(L)$).
Our first goal is to prove some lemmas that are useful for finding 
sets of mirrors close to a cusp $z$ such the reflections in them
 generate $R(L)$. Formally, these results are of the following sort:
\begin{lemma}
Let $G$ be a group of  isometries of a metric space $X$. Let $\mathcal{H}$
be $G$-stable collection subsets of $X$ and $A \subseteq X$ such that
$\lbrace d(A, H) \colon H \in \mathcal{H} \rbrace$ is a discrete subset of $[0, \infty)$.
Let $d_0 \in [0, \infty)$. Assume that for all $H \in \cH$ with $d(A, H) > d_0$,
there exists $g \in G$ such that $d( A, g H) < d( A, H)$.
Then $\lbrace H \in \cH \colon d(A, H) \leq d_0 \rbrace$ meets every $G$-orbit in $\cH$.
\end{lemma}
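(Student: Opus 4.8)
The plan is to argue by contradiction using the discreteness hypothesis, producing an infinite strictly decreasing sequence in a discrete subset of $[0,\infty)$, which is impossible.

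\medskip

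First I would fix an arbitrary $H_0 \in \cH$ and consider its $G$-orbit $G \cdot H_0$. Let $m = \inf \{ d(A, g H_0) : g \in G \}$. Because $\{ d(A, H) : H \in \cH \}$ is a discrete subset of $[0,\infty)$, so is its subset $\{ d(A, gH_0) : g \in G \}$; a discrete subset of $[0,\infty)$ that is bounded below attains its infimum (it cannot accumulate anywhere, in particular not at $m$ from above unless $m$ itself is a value). So there exists $g_0 \in G$ with $d(A, g_0 H_0) = m$ minimal over the orbit. Now I claim $d(A, g_0 H_0) \le d_0$: if instead $d(A, g_0 H_0) > d_0$, the hypothesis applied to $H = g_0 H_0 \in \cH$ gives some $g \in G$ with $d(A, g(g_0 H_0)) < d(A, g_0 H_0) = m$, contradicting minimality since $g g_0 H_0$ is again in the orbit of $H_0$. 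Hence $g_0 H_0 \in \{ H \in \cH : d(A, H) \le d_0 \}$, and this element lies in the $G$-orbit of $H_0$. Since $H_0$ was arbitrary, every $G$-orbit in $\cH$ is met.

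\medskip

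The only point requiring a little care is the claim that a discrete subset $S$ of $[0,\infty)$ which is bounded below attains its infimum. Here ``discrete'' means every point of $S$ is isolated in $S$; combined with $S \subseteq [0,\infty)$ one does \emph{not} automatically get closedness, so a priori $\inf S$ could fail to be in $S$ (e.g. $\{1/n\}$ is discrete with infimum $0 \notin S$). So I should read the hypothesis as saying $\{ d(A,H) : H \in \cH\}$ is discrete \emph{and closed} in $[0,\infty)$ (equivalently, locally finite), which is how such ``discreteness'' hypotheses are invariably used and is what holds in the intended application, where the distances take values in a discrete set coming from norms of lattice vectors. With that reading, $\inf S \in S$ is immediate, and the argument above goes through verbatim. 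Alternatively, one can avoid the issue entirely: pick $g_0$ with $d(A, g_0 H_0) < m + \varepsilon$ where $\varepsilon$ is smaller than the gap below the smallest value of $S$ exceeding $m$ — but the cleanest exposition is simply to use local finiteness.

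\medskip

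The main (and really the only) obstacle is thus purely this bookkeeping about what ``discrete subset of $[0,\infty)$'' buys us; the geometric content is entirely in the hypothesis ``$d(A, H) > d_0 \implies$ one can decrease the distance by a group element'', and the lemma is just the observation that a distance-minimizing representative of each orbit must then satisfy $d(A,\cdot) \le d_0$. I expect the proof to be only a few lines once the finiteness point is pinned down.
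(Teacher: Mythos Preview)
Your argument is correct and matches the paper's approach: the paper dismisses this lemma with the single sentence ``The proof is an obvious induction on height,'' which is exactly your minimality/descent argument. Your caveat about reading ``discrete'' as ``closed and discrete'' (locally finite) is the right way to make the one-line proof honest.
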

In this situation we say that $d(A, H)$ (or some suitable
increasing function of it) is the {\it height} of $H$ (with respect to $A$).
The proof is an obvious induction on height. 
We call these height reduction arguments.
In our application, $G$ will be some subgroup of $R(L)$, $X = \B(L)$, 
and $A$ will be either a point in $\B(L)$ or a small horoball around some
cusp of $L$. The collection $\cH$ will be either the set of mirrors
of $R(L)$ or a suitable collection of horoballs around the cusps of $L$.
\par
Our second goal of this section is to introduce a discrete Heisenberg group
$\T$ sitting in the stabilizer of a cusp in $\op{Aut}(L)$ and show that the
reflection group $R(L)$ contains a finite index subgroup of $\T$.
\begin{topic}{\bf Notation:}
\label{t-q}
For this section, let $\Lambda$ denote a $p$-modular positive definite $\cG$--lattice
of rank $n$.
Let  $\Lambda(r \bmod 4) = \lbrace \lambda \in \Lambda \colon \lambda^2 \equiv r \bmod 4 \rbrace$.
 Since the underlying $\Z$-lattice of $\Lambda$
is even, $\lambda \mapsto \tfrac{1}{2} \lambda^2 \bmod 2$ is a homomorphism
$ \Lambda \to \Z/2$.
The kernel of this homomorphism is $\Lambda(0 \bmod 4)$  and 
 the complement of the kernel is $\Lambda(2 \bmod 4)$.
If $a \in p \cG$, then
the homomorphism $\Lambda \to \Z/2$ factors through $ \Lambda/a$.
In other words, all the elements in a coset in $\Lambda/ a$
either have norm $0 \bmod 4$ or have norm $2 \bmod 4$. 
\end{topic}
\begin{topic}{\bf $p$-modular Gaussian Lorentzian lattice: }
Let $L = \Lambda \oplus \cell$.
Since $\cell$ is $p$-modular, so is $L$. Note that all vectors of $L$ have even norm.
 Vectors of $L$ will be written in the form
  $(\sigma ; m,n)$ where
 $\sigma \in \Lambda$ and $m,n \in \cG$. 
 The Hermitian form on $L$ is given by
 \begin{align*}
 \bigip{ (\sigma; m,n) }{(\sigma'; m', n') } 
% & = \ip{\sigma}{\sigma'} 
% + (\bar{m}, \bar{n}) \begin{pmatrix} 0 & \bar{p} \\ p & 0 \end{pmatrix} 
% \begin{pmatrix}
% m' \\ n'
% \end{pmatrix} \\
 & = \ip{\sigma}{\sigma'} + \bar{m} \bar{p} n' + \bar{n} p m'.
 \end{align*}
 In particular,
 \begin{equation*}
 (\sigma; m,n)^2 = \sigma^2 + 2 \op{Re}( \bar{m} \bar{p} n).
 \end{equation*}
Let
\begin{equation*}
\rho = (0; 0, 1).
\end{equation*}
This norm zero vector plays a special role throughout.
Note that
\begin{equation*}
\ip{\rho}{(\sigma; m,n)} = p m.
\end{equation*}
If $s \in L(N) - \rho^{\bot}$, then one can  
write $s$ in the form
\begin{equation}
s = \Bigl( \sigma; m, p \bar{m}^{-1} \bigl(  \tfrac{N - \sigma^2}{4} + \nu \bigr) \Bigr),
\label{eq-s-long}
\end{equation}
where $\nu \in \op{Im}(\C)$ is chosen so that the last coordinate of $s$ lies in $\cG$.
We define the {\it height} of a primitive lattice vector $s$ (with respect to $\rho$)
to be
\begin{equation*}
\op{ht}(s) = \begin{cases} \abs{ \ip{s}{\rho}}^2/ \abs{s^2} & \text{\; if  \;} s^2 \neq 0,\\
 \abs{ \ip{s}{\rho}}^2 & \text{\; if  \;} s^2 = 0.
 \end{cases}
\end{equation*}
Given $s \in L(N)$ and $s' \in L(N')$ written in the form 
 \eqref{eq-s-long}, we record an useful formula for their inner product which is verified by direct calculation:
 \begin{equation}
 \op{Re} \ip{ \tfrac{s}{m}}{ \tfrac{s'}{m'}} 
 =  \tfrac{N}{ 2 \abs{m}^2} + \tfrac{N'}{ 2 \abs{m'}^2} 
 - \tfrac{1}{2} \bigl(  \tfrac{\sigma}{m} - \tfrac{\sigma'}{m'} \bigr)^2, 
 \text{\;\;} 
\op{Im} \ip{ \tfrac{s}{m}}{ \tfrac{s'}{m'}}  = 
 \op{Im} \ip{ \tfrac{\sigma}{m}}{ \tfrac{\sigma'}{m'}} +  \tfrac{ 2\nu'}{\abs{m'}^2} - \tfrac{2\nu}{\abs{m}^2} .
 \label{eq-complete-square}
 \end{equation}
\end{topic}
\begin{topic}{\bf The roots of $L$ near the cusp $\rho$:}
\label{l-roots-in-shells-1-2}
The roots of $L$ are the vectors of minimum norm $2$. Let $s \in L(2)$ be a root.
As in \eqref{eq-s-long}, we write
\begin{equation*}
s = ( \sigma; m, p \bar{m}^{-1}  ( \tfrac{1}{2}(1 -\tfrac{\sigma^2}{2}) + \nu )).
\end{equation*}
Note that $\op{ht}(s) = \abs{m}^2$.
The roots having height $1, 2, 4, \dotsb$ are called the roots in the  first shell,
second shell, third shell and so on. 
Mirror of a $j$-th first shell root is called a $j$-th shell mirror 
Among the mirrors that do not pass through $\rho$,
the first shell mirrors are the mirrors closest to the cusp $\rho$ in horocyclic distance.
The second shell mirrors are the next closest and so on.
The lemma below explicitly describes the roots in the first two shell.
One verifies easily that the first shell roots of $L$ are of the form
\begin{equation*}
i^r \bigl( \sigma; 1, p \bigl( \tfrac{1}{2} ( 1 - \tfrac{\sigma^2}{2} ) + \nu \bigr) \bigr)
\text{\; where\; }
\sigma \in \Lambda, \;
\nu \in \tfrac{i}{2}  \Z \text{\; and \;} \tfrac{2}{ i} \nu \equiv (1 - \tfrac{\sigma^2}{2} ) \bmod 2.
\end{equation*}
Writing $\nu =  i k - \tfrac{i}{2} ( 1 - \tfrac{\sigma^2}{2} ) $ we find that the first shell roots are
of the form
\begin{equation*}
 i^r (\sigma; 1, 1 - \tfrac{\sigma^2}{2} + i p  k), \text{\; where \;} \sigma \in \Lambda \text{\; and \;} k \in \Z.
\end{equation*}
One verifies easily that the second shell roots of $L$ are of the form
\begin{equation*}
i^r \bigl( \sigma; \bar{p},  \tfrac{1}{2}( 1 - \tfrac{\sigma^2}{2}) + \nu  \bigr)
\text{\;  where \;} \sigma \in \Lambda(2 \bmod 4) \text{\; and \;} \nu \in i  \Z.
\end{equation*}
%Writing $\nu = i k$, we find that the the height $\bar{p}$ roots are
%\begin{equation*}
%i^r (\sigma; \bar{p}, \tfrac{1}{2}(1 - \tfrac{\sigma^2}{2}) +  i k), 
%\text{\; where \;} \sigma \in \Lambda(2 \bmod 4) \text{\; and \;} k \in \Z.
%\end{equation*}
%
It is useful to note that if $s$ is a first or second shell root written as above and we change
$\nu$ to $\nu' \in \nu + i \Z$, then we again get a root in the same shell.
\end{topic}
Let $l = ( \lambda; h, *) \in L$. 
Let $s = (\sigma; m, *)$ be  a root of $L$. Assume $h, m \neq 0$.
The lemma below gives us condition for a reflection in $s$ to decrease the height
of $l$.
If $l$ is a root, then this is equivalent to saying that a reflection in $s$
brings the mirror $\B(l^{\bot})$ closer to $\rho$. 
The condition is conveniently expressed in terms of the quantity
\begin{equation*}
y = y(s, l ) = \abs{m}^2 \bigip{ \tfrac{s}{m} }{ \tfrac{l}{h} }.
\end{equation*}
\begin{lemma}
An order four reflection in $s$ decreases the height of $l$ if and only if
$y = y(s, l)$ belongs to $B( 1 + i, \sqrt{2}) \cup B( 1-i, \sqrt{2})$,
that is, the union of radius $\sqrt{2}$ open discs in the complex plane centered at $(1 \pm i)$.
\label{l-height-reduction}
\end{lemma}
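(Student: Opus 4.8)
The plan is to carry out the reflection computation explicitly and reduce the height comparison to an elementary inequality in the complex plane. Since $L$ is $p$-modular, the root $s$ has $s^2 = 2$, and the two order four reflections in $s$ are $R_s^{i}$ and $R_s^{-i}$, with
\[
R_s^{\xi}(l) = l - \tfrac{1-\xi}{2}\ip{s}{l}\,s, \qquad \xi \in \{i,-i\}.
\]
Both lie in $\op{Aut}(L)$, hence are isometries; in particular $R_s^{\xi}(l)$ is again a primitive vector of the same norm as $l$. Therefore, regardless of whether $l^2 = 0$ or $l^2 \neq 0$, comparing $\op{ht}(R_s^{\xi}(l))$ with $\op{ht}(l)$ is the same as comparing $\abs{\ip{R_s^{\xi}(l)}{\rho}}^2$ with $\abs{\ip{l}{\rho}}^2$.

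Next I would compute $\ip{R_s^{\xi}(l)}{\rho}$ directly, using that the form is conjugate-linear in the first variable together with $\ip{l}{\rho} = \bar p\bar h$ and $\ip{s}{\rho} = \bar p\bar m$ (both read off from $\ip{\rho}{(\sigma;m,n)} = pm$). Writing $\mu = \ip{s}{l}$ this yields
\[
\ip{R_s^{\xi}(l)}{\rho} = \bar p\bar h\Bigl(1 - (1-\bar\xi)\,\tfrac{\bar m\bar\mu}{2\bar h}\Bigr).
\]
The key observation is that $\bar m\bar\mu/\bar h = \bar y$, since $y = y(s,l) = \abs{m}^2\ip{s/m}{l/h} = m\mu/h$. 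Hence
\[
\frac{\abs{\ip{R_s^{\xi}(l)}{\rho}}^2}{\abs{\ip{l}{\rho}}^2} = \Bigl\lvert 1 - (1-\bar\xi)\tfrac{\bar y}{2}\Bigr\rvert^2,
\]
so $R_s^{\xi}$ strictly decreases the height of $l$ if and only if this quantity is less than $1$.

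Finally I would identify the two resulting discs. For $\xi = i$ we have $1 - \bar\xi = 1 + i$ and $2/(1+i) = 1 - i$, so, dividing by $\abs{(1+i)/2} = 1/\sqrt 2$, the inequality $\lvert 1 - \tfrac{1+i}{2}\bar y\rvert < 1$ is equivalent to $\abs{\bar y - (1-i)} < \sqrt 2$, i.e.\ $y \in B(1+i,\sqrt 2)$. Symmetrically, for $\xi = -i$ we have $1 - \bar\xi = 1 - i$ and $2/(1-i) = 1 + i$, giving $y \in B(1-i,\sqrt 2)$. Since $R_s^{i}$ and $R_s^{-i}$ are the only order four reflections in $s$, the existence of an order four reflection in $s$ that decreases the height of $l$ is equivalent to $y \in B(1+i,\sqrt 2)\cup B(1-i,\sqrt 2)$, which is the assertion of the lemma.

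I do not anticipate a genuine obstacle: the argument is a one-line reflection formula followed by completing the square. The only delicate point is bookkeeping with the Hermitian-form conventions --- conjugate-linearity in the first slot and the conjugation on $\xi$ --- which one must track carefully so that the centres of the two discs come out as $1\pm i$ (and so that the correct disc is paired with the correct reflection).
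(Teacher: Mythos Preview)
Your proposal is correct and follows essentially the same route as the paper. The only cosmetic difference is that the paper moves the reflection to the other slot via $\ip{\rho}{R(l/h)}=\ip{R^{-1}(\rho)}{l/h}$ and then simplifies, whereas you apply $R_s^{\xi}$ directly to $l$; both computations land on the same inequality $\sqrt{2}>\lvert(1-\bar\xi)-y\rvert$ and hence on the two discs $B(1\pm i,\sqrt{2})$, with the same pairing of $\xi=i$ to $B(1+i,\sqrt2)$ and $\xi=-i$ to $B(1-i,\sqrt2)$.
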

\begin{proof}
Let $\xi = \pm i$ and let $R$ denote the $\xi$-reflection  in $s$. 
We calculate
\begin{align*}
\bigip{\rho}{R(\tfrac{l}{h} ) }
 = \bigip{R^{-1}(\rho) }{\tfrac{l}{h} } 
% = \Bigip{ \rho - \frac{(1 - \xi)}{2} \ip{s}{\rho} s}{\frac{l}{h} } 
 = \bigip{ \rho - \tfrac{1}{2} (1 - \bar{\xi}) \bar{m} \bar{p}  s}{\tfrac{l}{h} } 
 = p -  \tfrac{1}{2} (1 - \xi)  p y.
\end{align*}
The reflection $R$ brings $l^{\bot}$ closer to $\rho$ if and only if
$\bigabs{ \bigip{\rho}{\tfrac{l}{h}} } > 
\bigabs{ \bigip{\rho}{R(\tfrac{l}{h}) }} $, that is,
\begin{equation*}
\abs{p} >  \bigabs{p -  \tfrac{1}{2}(1 - \xi)   p y  }.
\end{equation*}
Multiplying both sides by $\abs{p^{-1}(1 - \bar{\xi})}$, the inequality becomes
\begin{equation*}
\sqrt{2}  >\bigabs{(1 - \bar{\xi})  -   y}
\end{equation*}
which is equivalent to $y$
lying in $B( 1 + i, \sqrt{2}) \cup B( 1-i, \sqrt{2})$.
\end{proof}
\begin{lemma}
Let $l = ( \lambda; h, p \bar{m}^{-1}  ( (l^2 -\lambda^2)/4 + \nu_l ))$ be a root of $L$.
\par
(a) If there exists $\sigma \in \Lambda$ with $(\sigma - \lambda/h)^2 \leq 2$ and $\abs{h} > 1$,
then a first shell reflection decreases the height of $l$.
\par
(b) If there exists $\sigma \in p^{-1}\Lambda(2 \bmod 4)$ with $(\sigma - \lambda/h)^2 \leq 1$
and $\abs{h} > \sqrt{2}$,
then a second shell reflection decreases the height of $l$.
\label{l-height-reduction-1-2}
 \end{lemma}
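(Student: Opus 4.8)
The plan is to apply Lemma~\ref{l-height-reduction} with a judicious choice of first/second shell root $s$, so that the resulting quantity $y = y(s,l)$ lands in $B(1+i,\sqrt2) \cup B(1-i,\sqrt2)$. First I would use the explicit description of the first shell roots from \ref{l-roots-in-shells-1-2}: a first shell root has the form $s = i^r(\sigma; 1, 1 - \tfrac{\sigma^2}{2} + ipk)$ with $\sigma \in \Lambda$, $k \in \Z$, and here $m = 1$, so $y(s,l) = \bigip{s}{l/h}$. Writing $l$ in the form \eqref{eq-s-long} and using the completing-the-square formula \eqref{eq-complete-square} with $N' = 2$, $m' = 1$, $N = l^2 = 2$, $m = h$, I get
\begin{equation*}
\op{Re}\, y = \op{Re} \bigip{s}{\tfrac{l}{h}} = \tfrac{1}{\abs{h}^2} + 1 - \tfrac12 \bigl( \sigma - \tfrac{\lambda}{h} \bigr)^2,
\end{equation*}
while the imaginary part of $y$ can be adjusted freely in steps of $1$ by varying the integer $k$ (this is exactly the remark at the end of \ref{l-roots-in-shells-1-2} that shifting $\nu$ by $i\Z$ keeps $s$ a first shell root; note $\op{Im} y$ shifts by $\pm 1$ under such a change). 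So by choosing $k$ appropriately I may assume $\abs{\op{Im} y - 1} \le \tfrac12$, i.e. $\op{Im} y \in [\tfrac12, \tfrac32]$, hence $(\op{Im} y - 1)^2 \le \tfrac14$.

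Now I pick $\sigma \in \Lambda$ with $(\sigma - \lambda/h)^2 \le 2$, which exists by hypothesis (a). Then $\op{Re} y \ge \tfrac{1}{\abs{h}^2} + 1 - 1 = \tfrac{1}{\abs{h}^2} > 0$ and $\op{Re} y \le \tfrac{1}{\abs{h}^2} + 1$. Since $\abs{h} > 1$ and $\abs{h}^2$ is a positive integer (norm of a Gaussian integer), we have $\abs{h}^2 \ge 2$, so $0 < \op{Re} y \le \tfrac32$, giving $(\op{Re} y - 1)^2 \le 1 - 2/\abs{h}^2 + 1/\abs{h}^4 < 1$; more carefully $\op{Re} y \in (0, \tfrac32]$ forces $(\op{Re} y - 1)^2 < 1$. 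Combining, $\abs{y - (1+i)}^2 = (\op{Re} y - 1)^2 + (\op{Im} y - 1)^2 < 1 + \tfrac14 < 2$, so $y \in B(1+i, \sqrt2)$ and Lemma~\ref{l-height-reduction} applies, proving (a). The main point to verify carefully is the arithmetic that $(\op{Re} y - 1)^2 + (\op{Im} y - 1)^2 < 2$ — one should check the worst case $\op{Re} y$ close to $0$ (which needs $\abs{\op{Im} y - 1}$ small, available from the $k$-adjustment) against $\op{Re} y$ close to $\tfrac32$; in both regimes the sum stays under $2$, with the $\abs{h}^2 \ge 2$ bound doing the work on the left end.

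For part (b) the argument is parallel but uses second shell roots $s = i^r(\sigma; \bar p, \tfrac12(1 - \tfrac{\sigma^2}{2}) + \nu)$ with $\sigma \in \Lambda(2 \bmod 4)$ and $\nu \in i\Z$, so that $m = \bar p$, $\abs{m}^2 = 2$, $N = s^2 = 2$. Here $y(s,l) = 2\bigip{s/\bar p}{l/h} = 2 \bigip{\sigma/\bar p + (\text{lower order})}{l/h}$; applying \eqref{eq-complete-square} with $m = \bar p$, $m' = h$, $N = N' = 2$ gives
\begin{equation*}
\op{Re} y = 2\Bigl( \tfrac{1}{4} + \tfrac{1}{\abs{h}^2} - \tfrac12 \bigl( \tfrac{\sigma}{\bar p} - \tfrac{\lambda}{h} \bigr)^2 \Bigr) = \tfrac12 + \tfrac{2}{\abs{h}^2} - \bigl( \tfrac{\sigma}{\bar p} - \tfrac{\lambda}{h} \bigr)^2,
\end{equation*}
and again $\op{Im} y$ can be shifted by $2 \cdot \tfrac{2\nu}{\abs m^2}$ — i.e. by integer amounts as $\nu$ ranges over $i\Z$ — so I may force $\abs{\op{Im} y - 1} \le \tfrac12$. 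Choosing $\sigma \in p^{-1}\Lambda(2 \bmod 4)$... (wait: one must match the normalization; the hypothesis gives $\sigma \in p^{-1}\Lambda(2 \bmod 4)$ so that $\sigma/\bar p$ ranges over $p^{-1} \cdot p^{-1}\Lambda(2\bmod 4)$, precisely the lattice of $m$-scaled second-shell directions) with $(\sigma - \lambda/h)^2 \le 1$, I get $\op{Re} y \ge \tfrac12 + \tfrac{2}{\abs h^2} - 1 = \tfrac{2}{\abs h^2} - \tfrac12$ and $\op{Re} y \le \tfrac12 + \tfrac{2}{\abs h^2}$. Since $\abs h > \sqrt2$ means $\abs h^2 \ge 3$ (next Gaussian norm above $2$), we get $\op{Re} y \in [\tfrac{2}{\abs h^2} - \tfrac12, \tfrac12 + \tfrac{2}{\abs h^2}] \subseteq (-\tfrac16, \tfrac76]$, whence $(\op{Re} y - 1)^2 < \tfrac{49}{36} < \tfrac74$ and $\abs{y - (1+i)}^2 < \tfrac74 + \tfrac14 = 2$, so again Lemma~\ref{l-height-reduction} gives the conclusion. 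I expect the main obstacle to be bookkeeping the exact scalings — making sure the lattice $p^{-1}\Lambda(2\bmod 4)$ in the hypothesis is the one whose points $\sigma$ actually index second-shell roots through the formula in \ref{l-roots-in-shells-1-2}, and that the free integer shift of $\op{Im} y$ really is available — rather than any deep geometric input; the distance estimate itself is elementary once the normalizations are pinned down.
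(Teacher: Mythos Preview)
Your approach is the paper's approach: pick a first or second shell root $s$ with the given $\sigma$, compute $\op{Re} y$ from \eqref{eq-complete-square}, use the free parameter $\nu$ (equivalently $k$) to control $\op{Im} y$, and apply Lemma~\ref{l-height-reduction}. The paper phrases the target region as the open rectangle $(0,2)\times[-1,1]$ inside $B(1+i,\sqrt2)\cup B(1-i,\sqrt2)$; your version aiming at a single disc is equivalent.

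There are, however, several arithmetic slips you need to correct:
\begin{itemize}
\item The step size for $\op{Im} y$ is $2$, not $1$. From \eqref{eq-complete-square} the relevant term in $y=|m|^2\ip{s/m}{l/h}$ is $-2\nu$ (in both shells, after multiplying by $|m|^2$), so $\nu\mapsto\nu+i$ shifts $\op{Im} y$ by $\mp 2$. Thus you can only force $\op{Im} y$ into an interval of length $2$, giving $(\op{Im} y-1)^2\le 1$, not $\le 1/4$.
\item In part (b), $N/(2|m|^2)=2/4=1/2$, not $1/4$, so the correct formula is $\op{Re} y = 1 + 2/|h|^2 - (\sigma/\bar p - \lambda/h)^2$, as in \eqref{eq-blund-on-y-shell-2}. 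This matters: with your wrong constant the lower bound can be $0$, and combined with the corrected bound $(\op{Im} y-1)^2\le 1$ you would only get $|y-(1+i)|^2\le 2$, not strict.
\item $3$ is not the norm of a Gaussian integer; the next norm above $2$ is $4$. (Harmless here, but worth noting.)
\item The parenthetical about matching $\sigma\in p^{-1}\Lambda(2\bmod4)$ to second shell roots is muddled. The root has first coordinate $\sigma'\in\Lambda(2\bmod4)$; the $\sigma$ of the hypothesis is $\sigma'/\bar p$. Since $\bar p/p=-i$ is a unit, $\sigma'=\bar p\,\sigma$ indeed lies in $\Lambda(2\bmod4)$, so there is no normalization issue.
\end{itemize}
Once these are fixed you get $\op{Re} y\in(0,2)$ strictly in both parts, hence $(\op{Re} y-1)^2<1$, and together with $(\op{Im} y-1)^2\le 1$ the argument goes through exactly as in the paper.
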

\begin{proof}
In part (a) (resp. (b)) we show that we can choose a root $s$ in the first (resp. second) shell 
such that $y = y(s, l)$ belongs to the rectangle $(0,2) \times [-i, i]$. 
The lemma then follows from \ref{l-height-reduction}, since this rectangle
is a subset of $B( 1 + i, \sqrt{2}) \cup B( 1-i, \sqrt{2})$.
\par
(a) Choose $\sigma \in \Lambda$
such that $\bigl(  \sigma - \lambda/h \bigr)^2 \leq 2$.
Consider a first shell root written in the form
$s =\bigl( \sigma; 1, p \bigl(  \tfrac{1}{2}(1 - \tfrac{1}{2}\sigma^2) + \nu \bigr) \bigr)$
as in \ref{l-roots-in-shells-1-2} with $\nu$ still to be determined.
Using \eqref{eq-complete-square}, we compute
\begin{equation}
\op{Re}(y)
=   1 + \tfrac{l^2}{ 2 \abs{h}^2} 
 - \tfrac{1}{2} \bigl(  \sigma - \tfrac{\lambda}{h} \bigr)^2 
\text{\; and \;} 
\op{Im}(y) = 
 \op{Im} \ip{\sigma}{ \tfrac{\lambda}{h}} +  \tfrac{ 2\nu_l}{\abs{h}^2} - 2\nu.
 \label{eq-blund-on-y-shell-1}
\end{equation}
Since $\abs{h} > 1$ and $l^2 = 2$, The choice of $\sigma$ ensures that $\op{Re}(y) \in (0,2)$.
From \ref{l-roots-in-shells-1-2},
note that we are free to choose $2 \nu/i$ either from $2 \Z$ or from $2 \Z + 1$.
So we can choose $\nu$ to ensure that
$\op{Im} (y)  \in [-i, i]$.
\par
(b) 
Choose $\sigma \in \Lambda(2 \bmod 4)$
such that $\bigl(  \sigma/\bar{p} - \lambda/h \bigr)^2 \leq 1$.
Consider a second shell root written in the form 
$s  = \bigl( \sigma; \bar{p}, \bigl( \tfrac{1}{2}(1 - \tfrac{1}{2}\sigma^2) + \nu \bigr) \bigr)$ 
as in \ref{l-roots-in-shells-1-2} with $\nu$ still to be determined.
Using \eqref{eq-complete-square}, we compute
\begin{equation}
\op{Re}(y)
 =  1 + \tfrac{ l^2 }{ \abs{h}^2}  - \bigl(  \tfrac{\sigma}{\bar{p} } - \tfrac{\lambda}{h} \bigr)^2
 \text{\; and \;} 
\op{Im}(y) =  
 2\op{Im} \ip{ \tfrac{\sigma}{\bar{p}}}{ \tfrac{\lambda}{h}} +  \tfrac{ 4 \nu_l}{\abs{h}^2} - 2\nu.
  \label{eq-blund-on-y-shell-2}
\end{equation}
Since $\abs{h} > \sqrt{2}$ and $l^2 = 2$, the choice of $\sigma$ ensures that $\op{Re}(y) \in (0,2)$. 
From \ref{l-roots-in-shells-1-2},
Note that we are free to choose $2 \nu/i$ from $2 \Z$. So we can choose $\nu$ to ensure that
$\op{Im} (y)  \in [-i, i]$.
\end{proof}
\begin{lemma}
Let $l = (\lambda; h, *)$ be a primitive null vector of $L$ with $h \neq 0$.
Assume that $l$ is not orthogonal to any root of $L$.
\par
(a) If there exists $\sigma \in \Lambda$ with $(\sigma - \lambda/h)^2 \leq 2$,
then a reflection in a first shell root decreases height of $l$.
\par
(b) If there exists $\sigma \in p^{-1}\Lambda(2 \bmod 4)$ with $(\sigma - \lambda/h)^2 \leq 1$,
then a reflection in a second shell root decreases the height of $l$.
\label{l-height-reduction-condition-at-cusp}
 \end{lemma}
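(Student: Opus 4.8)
The plan is to deduce both statements from Lemma \ref{l-height-reduction} in the same way that Lemma \ref{l-height-reduction-1-2} was, the one new ingredient being the hypothesis that $l$ is orthogonal to no root of $L$. First observe that Lemma \ref{l-height-reduction} is already stated for an arbitrary lattice vector $l=(\lambda;h,*)$ with $h\neq 0$, hence applies to a null vector $l$ without change: for a root $s=(\sigma;m,*)$ with $m\neq 0$, a $(\pm i)$-reflection in $s$ decreases $\op{ht}(l)$ if and only if $y=y(s,l)=\abs{m}^2\ip{s/m}{l/h}$ lies in $B(1+i,\sqrt2)\cup B(1-i,\sqrt2)$. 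Since $y$ is a nonzero scalar multiple of $\ip{s}{l}$, and $s$ is a root while $l$ is orthogonal to no root, we have $y\neq 0$ for \emph{every} first or second shell root $s$. This is the observation that will take the place of the conditions $\abs{h}>1$ and $\abs{h}>\sqrt2$ used in \ref{l-height-reduction-1-2}.

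For part (a), choose $\sigma\in\Lambda$ with $(\sigma-\lambda/h)^2\leq 2$, as permitted by hypothesis, and let $s=(\sigma;1,p(\tfrac12(1-\tfrac12\sigma^2)+\nu))$ be a first shell root in the normal form of \ref{l-roots-in-shells-1-2}, with $\nu$ still to be chosen. The computation in the proof of \ref{l-height-reduction-1-2}(a), now with $l^2=0$, gives $\op{Re}(y)=1-\tfrac12(\sigma-\lambda/h)^2$ (this is \eqref{eq-blund-on-y-shell-1} with the summand $l^2/(2\abs{h}^2)$ deleted) together with the same expression for $\op{Im}(y)$; so $\op{Re}(y)\in[0,1]$, and by the freedom in $\nu$ noted in \ref{l-roots-in-shells-1-2} we may arrange $\op{Im}(y)\in[-i,i]$. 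Thus $y$ lies in the box $[0,1]\times[-i,i]$, and $y\neq 0$ by the previous paragraph. A direct check shows $([0,1]\times[-i,i])\setminus\{0\}\subseteq B(1+i,\sqrt2)\cup B(1-i,\sqrt2)$: for $y=r+it$ with $r\in[0,1]$ and $t\geq 0$ one has $\abs{y-(1+i)}^2=(r-1)^2+(t-1)^2\leq 2$, with equality only at $y=0$, and the case $t<0$ is symmetric using the centre $1-i$. By Lemma \ref{l-height-reduction} the order four reflection in $s$ then decreases $\op{ht}(l)$.

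Part (b) is entirely parallel: take $\sigma\in\Lambda(2\bmod 4)$ with $(\sigma/\bar p-\lambda/h)^2\leq 1$ (equivalently, the element $\sigma/\bar p$ of $p^{-1}\Lambda(2\bmod 4)$ supplied by the hypothesis), form a second shell root $s=(\sigma;\bar p,\tfrac12(1-\tfrac12\sigma^2)+\nu)$ as in \ref{l-roots-in-shells-1-2}, compute $\op{Re}(y)=1-(\sigma/\bar p-\lambda/h)^2\in[0,1]$ from \eqref{eq-blund-on-y-shell-2} with $l^2=0$, choose $\nu$ so that $\op{Im}(y)\in[-i,i]$, note $y\neq 0$, and apply Lemma \ref{l-height-reduction}. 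I do not expect any real obstacle here; the only delicate point, and the reason the hypothesis that $l$ is orthogonal to no root is needed, is the boundary value $\op{Re}(y)=0$ — the point $y=0$ lies on the common boundary circle of the two discs. For a root $l$ this cannot happen, since there $\op{Re}(y)$ acquires an extra positive term $l^2/(2\abs{h}^2)$, which is why \ref{l-height-reduction-1-2} could instead afford the bounds $\abs{h}>1$, $\abs{h}>\sqrt2$; here that term vanishes and the non-orthogonality of $l$ to roots is exactly what keeps $y$ away from the origin.
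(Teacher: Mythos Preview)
Your proof is correct and follows essentially the same approach as the paper's own proof: both reduce to Lemma~\ref{l-height-reduction} by redoing the computation of Lemma~\ref{l-height-reduction-1-2} with $l^2=0$, obtaining $y\in[0,1]\times[-i,i]$, and then invoking the non-orthogonality hypothesis to rule out $y=0$. Your write-up is somewhat more detailed (in particular you spell out the inclusion $([0,1]\times[-i,i])\setminus\{0\}\subseteq B(1+i,\sqrt2)\cup B(1-i,\sqrt2)$ and explain why the non-orthogonality hypothesis replaces the conditions $|h|>1$, $|h|>\sqrt2$), but the argument is the same.
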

\begin{proof}
The proof is almost identical to the proof of lemma \ref{l-height-reduction-1-2}. 
The computation is only slightly different. Equations \eqref{eq-blund-on-y-shell-1} and 
\eqref{eq-blund-on-y-shell-2} still hold but
now since $l^2 = 0$, we obtain $y(s, l) \in [0,1] \times [-i, i]$. This rectangle minus the origin
is a subset of $B( 1 + i, \sqrt{2}) \cup B( 1-i, \sqrt{2})$.
Since $l$ is assumed
to be not orthogonal to any root, we have $y(s, l) \neq 0$.
\end{proof}
\begin{definition}[The Heisenberg group of translations]
Let $\T$ be the group of automorphisms of $L$ that fix $\rho$ and act trivially
on $\rho^{\bot}/\rho$. One verifies that 
\begin{equation*}
\T 
= \lbrace T_{\lambda, z} \colon \lambda \in \Lambda, z \in i( \lambda^2/2 + 2 \Z) \rbrace
= \lbrace T_{\lambda, i(\lambda^2/2) + 2 i k} \colon \lambda \in \Lambda, k \in \Z \rbrace
\end{equation*}
where $T_{\lambda, z} \in \op{Aut}(L)$ is defined by
\begin{equation*}
T_{\lambda, z} (l; a, b) = (l + a \lambda; a,  -\bar{p}^{-1} \ip{\lambda}{l} + a \bar{p}^{-1} (z - \lambda^2/2) + b).  
\end{equation*}
Note that for each $\lambda \in \Lambda$, the integer $z/i$ either runs over $2 \Z$ or $2 \Z + 1$.
We call $\T$ the group of {\it translations}. 
One verifies that the translations form a discrete Heisenberg group 
whose multiplication is given by
\begin{equation}
T_{\lambda, z} T_{\lambda', z'} = T_{ \lambda + \lambda', z + z' + \op{Im} \ip{\lambda'}{\lambda} }.
\label{eq-multiplication-in-heisenberg-group}
\end{equation}
Note that the translations of the form $T_{0,z}$ are central,
$T_{\lambda, z}^{-1} = T_{-\lambda, -z}$,
and
\begin{equation}
T_{\lambda, z} T_{\lambda', z'} T_{\lambda, z}^{-1} T_{\lambda', z'}^{-1} 
= T_{0, 2 \op{Im} \ip{\lambda'}{\lambda}}.
\label{eq-commutator-of-translations}
\end{equation}
\end{definition}
\begin{lemma}
Let $R_1$ and $R_2$ be the $i$-reflections in the roots $r_1 = (0^n; 1, 1)$ and
$r_2 = (0^n; 1, i)$ respectively.
Let $\lambda \in \Lambda$. Choose $z$ such that $T_{\lambda, z} \in \T$.
Let $G$ be the group generated by the reflections in 
$T_{\lambda, z}( r_1)$, $T_{\lambda, z} (r_2)$, $r_1$, $r_2$.
Then $T_{\bar{p} \lambda, i \lambda^2} \in G$.
\label{l-r1-r2} 
\end{lemma}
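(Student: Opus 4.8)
The plan is to first understand how the $i$-reflections $R_{r_1}$ and $R_{r_2}$ act, and to express the product $R_{r_1}^{\pm 1} R_{r_2}^{\pm 1}$ (and their conjugates by $T_{\lambda,z}$) in terms of elements of the Heisenberg group $\T$. Concretely, since $r_1 = (0^n;1,1)$ and $r_2 = (0^n;1,i)$ both have norm $2$ and $\ip{r_1}{r_2} = \bar p\,i + \bar i\, p = (1-i)i + (-i)(1+i) = (i+1) + (1-i) = 2$ — wait, recompute: $\ip{(0;1,1)}{(0;1,i)} = \bar 1\bar p\, i + \bar 1\, p\, 1 = \bar p i + p$, and $\bar p i + p = (1-i)i + (1+i) = (i+1)+(1+i) = 2+2i$... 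I will in any case record the exact value; what matters is only $\abs{\ip{r_1}{r_2}}$ and the fact (from the paragraph on reflections of $p$-modular lattices and the relation $iR_sR_tR_s(t)=t$ when $\abs{\ip{s}{t}}=2$) that $R_{r_1}$ and $R_{r_2}$ satisfy a braid-type relation. The key observation is that $r_1$ and $r_2$ fix $\rho$-related data in a controlled way: a direct computation of $R_{r_i}$ on a general vector $(l;a,b)$ using the reflection formula $R_v^{\xi}(x) = x - (1-\xi)\ip{v}{x}v/v^2$ will show that a suitable word in $R_{r_1},R_{r_2}$ equals a central translation $T_{0,z_0}$ for an explicit $z_0$, or more precisely lies in $\T$; and conjugating that word by $T_{\lambda,z}$ produces another element of $\T$ computable via \eqref{eq-multiplication-in-heisenberg-group}.

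The main step is then the identity: if $w(R_{r_1},R_{r_2})$ denotes the word that equals some $T_{\mu_0,z_0}\in\T$, then the group $G$ generated by $R_{r_1},R_{r_2},T_{\lambda,z}R_{r_1}T_{\lambda,z}^{-1},T_{\lambda,z}R_{r_2}T_{\lambda,z}^{-1}$ contains both $T_{\mu_0,z_0}$ and $T_{\lambda,z}T_{\mu_0,z_0}T_{\lambda,z}^{-1} = T_{\mu_0,\, z_0 + 2\op{Im}\ip{\mu_0}{\lambda}}$ (using \eqref{eq-commutator-of-translations}-style conjugation). Taking the quotient of these two elements, $G$ contains the central translation $T_{0,\,2\op{Im}\ip{\mu_0}{\lambda}}$. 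Combining such central translations with the $T_{\mu_0,z_0}$ and its conjugate, and possibly iterating with the other sign choices of the reflections (each $R_{r_i}$ and $R_{r_i}^{-1}$ are both available in $G$), I expect to generate $T_{\bar p\lambda, i\lambda^2}$ on the nose. The bookkeeping is: identify $\mu_0$ (I expect $\mu_0$ is a fixed nonzero vector of $\Lambda$, essentially forced to be $0$ or proportional to a fixed root, so that $T_{\lambda,z}R_{r_i}T_{\lambda,z}^{-1}$ has "translation part" shifted by $\lambda$), so that a short product of the four generators has translation part exactly $\bar p\lambda$, and then adjust the central ($z$) coordinate to $i\lambda^2$ using central translations $T_{0,2\op{Im}\ip{\cdot}{\cdot}}$ already shown to be in $G$.

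The hard part will be the explicit computation of how $T_{\lambda,z}$ conjugates $R_{r_1}$ and $R_{r_2}$: one must check that $T_{\lambda,z}(r_1)$ and $T_{\lambda,z}(r_2)$ are again roots of the same (first) shell with $\sigma$-part shifted by $\lambda$, and track the resulting change in the last coordinate, which involves $\ip{\lambda}{\cdot}$ and $\lambda^2/2$ terms — exactly the terms appearing in the definition of $T_{\lambda,z}$. Once the conjugates are written in the normal form $(\sigma;1,*)$ of \ref{l-roots-in-shells-1-2}, matching the $z$-coordinate against $i\lambda^2$ is linear algebra over $\Z$, and the commutator formula \eqref{eq-commutator-of-translations} supplies whatever central corrections are needed. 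I would organize the write-up as: (1) compute $\ip{r_1}{r_2}$ and record the braid/commutation data; (2) compute the word in $R_{r_1},R_{r_2}$ lying in $\T$ and identify it; (3) compute the $T_{\lambda,z}$-conjugates of $r_1,r_2$; (4) assemble $T_{\bar p\lambda,i\lambda^2}$ from these inside $G$, using \eqref{eq-multiplication-in-heisenberg-group} and \eqref{eq-commutator-of-translations}.
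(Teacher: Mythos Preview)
Your plan has the right overall shape (look at words in $R_1,R_2$ and compare them to their $T_{\lambda,z}$-conjugates), but there is a genuine gap in the key step. You propose to find a word $w(R_1,R_2)$ equal to some $T_{\mu_0,z_0}\in\T$ and then compare it with its conjugate $T_{\lambda,z}T_{\mu_0,z_0}T_{\lambda,z}^{-1}$. The problem is that $R_1$ and $R_2$ act trivially on $\Lambda$ (their roots have $\Lambda$-component $0$), so any word in them acts trivially on $\Lambda$; the only elements of $\T$ you can reach this way have $\mu_0=0$, i.e.\ are \emph{central} translations $T_{0,z_0}$. Conjugating a central translation by $T_{\lambda,z}$ does nothing, so your commutator is trivial and you never produce a translation with nonzero $\Lambda$-part. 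In particular there is no way to get ``translation part exactly $\bar p\lambda$'' by the mechanism you describe.

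What the paper actually does is observe that $R_1R_2$ is \emph{not} in $\T$: one checks directly that $R_1R_2=\beta\,T_{0,-4i}$, where $\beta$ is the automorphism that is the identity on $\Lambda$ and multiplication by $-i$ on $\cell$. The crucial point is how $\beta$ conjugates translations: $\beta T_{\lambda,z}\beta^{-1}=T_{i\lambda,z}$ (the $\Lambda$-part gets multiplied by $i$). Since $T_{0,-4i}$ is central, it follows that $(R_1R_2)T_{\lambda,z}(R_1R_2)^{-1}=T_{i\lambda,z}$, and hence the element
\[
T_{\lambda,z}(R_1R_2)T_{\lambda,z}^{-1}(R_1R_2)^{-1}
=T_{\lambda,z}\,T_{i\lambda,z}^{-1}
=T_{(1-i)\lambda,\ \op{Im}\ip{-i\lambda}{\lambda}}
=T_{\bar p\lambda,\ i\lambda^2}
\]
lies in $G$. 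The missing idea in your proposal is precisely this ``twist'' $\beta$: you should not look for a word in $R_1,R_2$ lying in $\T$, but rather use the fact that $R_1R_2$ normalizes $\T$ and acts on it by $\lambda\mapsto i\lambda$. Once you have that, the computation is a one-liner via \eqref{eq-multiplication-in-heisenberg-group}.
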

\begin{proof}

Let $\beta$
be the automorphism of $L$ that is identity on $\Lambda$ and acts on $\cell$ as multiplication
by $-i$. Then one verifies that 
$R_1 R_2 =  \beta T_{0, -4i}$.
Since $T_{0, -4i}$ is a central translation, it follows that
\begin{equation*}
(R_1 R_2) T_{\lambda, z} (R_1 R_2)^{-1} 
= \beta 
T_{\lambda, z} \beta^{-1}
= T_{ i \lambda, z}.  
\end{equation*}
So $G$ contains
$T_{\lambda, z} (R_1 R_2) T_{\lambda, z}^{-1} (R_1 R_2)^{-1} = T_{\bar{p} \lambda, i \lambda^2}$.
\end{proof}
We finish this section by showing that $R(L)$ contains many translations, 
specifically, a finite index subgroup of $\T$.
\begin{corollary} 
(a) Fix a $\cG$-basis $\lambda_1, \dotsb, \lambda_n$ of $\Lambda$.
For each $j = 1, \dotsb, n$, fix $z_j \in \op{Im}(\C)$ such that $ T_{\lambda_j, z_j } \in \T$.
Let $G$ be the subgroup of $R(L)$ generated by reflections in the following set of roots:
\begin{equation*}
\lbrace r_k, T_{\lambda_j, z_j} (r_k), T_{i \lambda_j, z_j} (r_k) : k = 1, 2, j = 1, 2, \dotsb, n \rbrace.
\end{equation*}
Then $G$ contains a translation of the form $T_{\lambda, *}$
for each $\lambda \in p \Lambda$ and the central translations $T_{0; 4 i m }$ for all $m \in \Z$.
\par
(b) A full set of coset representatives for $(R(L) \cap \T) \backslash \T$ can be chosen from the finite set
$\mathbb{T}_* = \lbrace T_{\sigma, i \sigma^2/2}, T_{\sigma, 2 i + i (\sigma^2/2)} \colon
\sigma \in (\Lambda / p)^{\sim} \rbrace$ where $(\Lambda/p)^{\sim}$
is a full set of coset representatives for $\Lambda/ p$.
\label{l-R(L)-has-many-translations}
 \end{corollary}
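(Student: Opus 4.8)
The plan is to manufacture enough translations inside $R(L)$ by feeding the prescribed generating reflections into Lemma~\ref{l-r1-r2}, and then to finish by elementary bookkeeping in the Heisenberg group $\T$ via the multiplication law \eqref{eq-multiplication-in-heisenberg-group} and the commutator identity \eqref{eq-commutator-of-translations}.

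For part~(a), I would first apply Lemma~\ref{l-r1-r2} twice for each basis vector $\lambda_j$. By construction $G$ contains the $i$-reflections in $r_1$, $r_2$, $T_{\lambda_j, z_j}(r_1)$ and $T_{\lambda_j, z_j}(r_2)$, so the lemma (with $\lambda = \lambda_j$, $z = z_j$) gives $T_{\bar p \lambda_j,\, i\lambda_j^2} \in G$. Since $(i\lambda_j)^2 = \lambda_j^2$, the same $z_j$ satisfies $T_{i\lambda_j, z_j} \in \T$, and $G$ also contains the reflections in $r_1$, $r_2$, $T_{i\lambda_j, z_j}(r_1)$ and $T_{i\lambda_j, z_j}(r_2)$; applying the lemma to $i\lambda_j$ then gives $T_{\bar p i\lambda_j,\, i\lambda_j^2} = T_{p\lambda_j,\, i\lambda_j^2} \in G$, using $\bar p i = p$. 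Because $\bar p\lambda_j = -i p\lambda_j$, the set $\{\bar p\lambda_j, p\lambda_j : 1 \le j \le n\}$ $\Z$-spans the same lattice as the $\Z$-basis $\{p\lambda_j, i p\lambda_j\}$ of $p\Lambda$, so multiplying these translations and their inverses produces, for every $\lambda \in p\Lambda$, a translation $T_{\lambda, *} \in G$ (only the first coordinate needs tracking). For the central translations I would substitute $\lambda = p\alpha$ and $\lambda' = p\alpha'$ with $\alpha, \alpha' \in \Lambda$ into \eqref{eq-commutator-of-translations}: since $\ip{p\alpha'}{p\alpha} = 2\ip{\alpha'}{\alpha}$, this gives $T_{0,\, 4\op{Im}\ip{\alpha'}{\alpha}} \in G$. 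As $\Lambda$ is $p$-modular its Hermitian form is $p\cG$-valued but cannot be $2\cG$-valued (else $\Lambda \subseteq p\Lambda$), so some $\ip{\alpha'}{\alpha}$ has odd imaginary part; hence the values $\op{Im}\ip{\alpha'}{\alpha}$ generate $i\Z$ and $G$ contains $T_{0, 4im}$ for every $m \in \Z$.

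For part~(b), take an arbitrary $T_{\mu, w} \in \T$, let $\sigma \in (\Lambda/p)^{\sim}$ be the chosen representative of $\mu \bmod p\Lambda$, and write $\mu - \sigma = p\alpha$ with $\alpha \in \Lambda$. For either value $t \in \{i\sigma^2/2,\; 2i + i\sigma^2/2\}$ (both admissible, i.e. $T_{\sigma,t}\in\T$), the multiplication law \eqref{eq-multiplication-in-heisenberg-group} gives $T_{\mu, w}\, T_{\sigma, t}^{-1} = T_{p\alpha,\; w - t - \op{Im}\ip{\sigma}{\mu}} \in \T$. By part~(a) there is $a$ with $T_{p\alpha, a} \in R(L)$; comparing second coordinates, the difference $(w - t - \op{Im}\ip{\sigma}{\mu}) - a$ lies in $2i\Z$, being a difference of legitimate second coordinates for the first coordinate $p\alpha$. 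Replacing $t$ by $t + 2i$ changes this difference by $-2i$, so exactly one of the two choices of $t$ makes it land in $4i\Z$; for that choice $T_{p\alpha,\; w - t - \op{Im}\ip{\sigma}{\mu}} = T_{p\alpha, a}\cdot T_{0, 4im}$ for some $m$, hence lies in $R(L) \cap \T$. Therefore $T_{\mu, w}$ and $T_{\sigma, t}$ lie in the same right coset of $R(L) \cap \T$, so every coset meets $\mathbb{T}_*$, which is the assertion.

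The hard part is really the center: Lemma~\ref{l-r1-r2} only yields central translations at level $4i\Z$, which has index $2$ in the full center $2i\Z$ of $\T$. So one must check carefully that $p$-modularity of $\Lambda$ is exactly enough to realise all of $4i\Z$, and that the two-element ambiguity $\{i\sigma^2/2,\; 2i + i\sigma^2/2\}$ built into $\mathbb{T}_*$ is precisely what absorbs the leftover $\Z/2$; the remaining steps are routine Heisenberg arithmetic.
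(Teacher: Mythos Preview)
Your proof is correct and follows essentially the same route as the paper: apply Lemma~\ref{l-r1-r2} to $\lambda_j$ and to $i\lambda_j$ to obtain translations with first coordinates $\bar p\lambda_j$ and $p\lambda_j$, observe these $\Z$-span $p\Lambda$, extract the central translations $T_{0,4im}$ from commutators, and then reduce an arbitrary $T\in\T$ modulo $R(L)\cap\T$ into $\mathbb{T}_*$. Your handling of the central-translation step is a bit more explicit than the paper's (which simply asserts one can pick $\lambda,\lambda'\in p\Lambda$ with $\ip{\lambda'}{\lambda}=2p$), and in part~(b) you multiply on the right by $T_{\sigma,t}^{-1}$ rather than on the left by an element of $R(L)\cap\T$, but these are cosmetic differences.
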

\begin{proof}
Lemma \ref{l-r1-r2} implies that $G$ contains
a translation of the form $T_{  p \lambda_j, *}$ and a translation of the form
$T_{i p \lambda_j, *}$ for each $j$. 
From equation \eqref{eq-multiplication-in-heisenberg-group}, it follows
that $R(L)$ contains a translation of the form
$T_{\lambda, *}$  for all $\lambda \in p \Lambda$.
Since $\Lambda$ is $p$-modular, choose $\lambda, \lambda' \in p \Lambda$ 
with $\ip{\lambda'}{\lambda} = 2 p$. Then equation \eqref{eq-commutator-of-translations}
implies that  $G$ also contains
$T_{0, 4 i}$. So $G$ contains the central translations of the form $T_{0 , 4 i m}$ for all $m \in \Z$.
\par
(b) Let $T = T_{\lambda', *} \in \T$. Choose $\sigma \in (\Lambda/ p)^{\sim}$
such that $\sigma - \lambda' = p \lambda$ for some $\lambda \in \Lambda$.
By part (a) we can choose a translation $T_1$ in $R(L)$ having the form
$T_1 = T_{p \lambda, *}$. Then $T_1 T = T_{p \lambda, *} T_{\lambda', *} 
= T_{p \lambda + \lambda' , *} = T_{\sigma, *}$. So $T_1 T = T_{\sigma, 2 i k + i (\sigma^2/2)}$
for some $k \in \Z$. Choose $m \in \Z$ such that $k + 2m \in \lbrace 0, 1 \rbrace$.
Let $T' = T_{0, 4 m i} T_1$. Then $T' \in R(L)$ and 
 $T' T = T_{\sigma,2i( k + 2m ) +  i (\sigma^2/2) } \in \mathbb{T}_*$.
\end{proof}
\section{Reflection group of the $p$-modular $\cG$-lattice of signature $(9,1)$}

\begin{lemma}
There is a unique $p$-modular Gaussian lattice of signature $(4n+1,1)$.
\label{l-pmodular}
\end{lemma}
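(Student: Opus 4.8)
The plan is to prove existence and uniqueness of a $p$-modular Gaussian lattice of signature $(4n+1,1)$ separately, using the genus theory of lattices over $\cG$ together with the classification of $\cG$-lattices by their localizations.

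\textbf{Existence.} This is the easy half. The lattice $\cO_{1,1} = \cG_{1,1}$ is $p$-modular of signature $(1,1)$, and $D_4^{\cG}$ is $p$-modular positive definite of rank $4$ (both facts are recorded earlier in the excerpt). Since a direct sum of $p$-modular lattices is again $p$-modular, the lattice
\begin{equation*}
n D_4^{\cG} \oplus \cG_{1,1}
\end{equation*}
is $p$-modular of signature $(4n+1,1)$, which settles existence. (For $n=2$ this is the lattice $L = \cG_{9,1}$ studied in the paper.)

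\textbf{Uniqueness.} This is the substantive part, and I expect the local analysis at the prime $p$ to be the main obstacle. The strategy is a Hasse--Minkowski / spinor-genus style argument adapted to the Hermitian $\cG$-setting: a $\cG$-lattice $K$ is determined up to isometry by its genus (its isometry class at every completion) together with the fact that its class number is one, and for an indefinite lattice of rank $\geq 3$ strong approximation forces the genus to coincide with the isometry class. So it suffices to show that any two $p$-modular $\cG$-lattices of signature $(4n+1,1)$ are locally isometric at every place. Away from $p$ the lattices are unimodular over $\cG_v$, hence determined by rank and (at the archimedean place) signature, so they agree there. The whole question reduces to: \emph{there is a unique $p$-modular Hermitian $\cG_p$-lattice of each rank over the completion at $p$}. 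Here one uses that $p^2 \sim 2$ is (up to a unit) the ramified prime, and a $p$-modular lattice is exactly one whose form, rescaled by $p^{-1}$, is unimodular over the ramified extension $\Q_2(i)/\Q_2$; the classification of unimodular Hermitian lattices over a ramified dyadic local field then gives uniqueness once the rank is fixed. Concretely I would diagonalize/normalize the Gram matrix over $\cG_p$, reducing to an orthogonal sum of rank-one and rank-two pieces, and check that the rank-two piece $\bigl(\begin{smallmatrix} 0 & \bar p \\ p & 0 \end{smallmatrix}\bigr)$ absorbs any anisotropic leftover — this is where the identity $iR_sR_tR_s(t)=t$ type manipulations, i.e. the abundance of norm-$2$ vectors and the structure of $D_4^{\cG}$, make the local lattice rigid.

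\textbf{Assembling the argument.} Combining the two steps: existence gives $nD_4^{\cG}\oplus\cG_{1,1}$; the local uniqueness at $p$ plus triviality of the local classification away from $p$ shows every $p$-modular $\cG$-lattice of signature $(4n+1,1)$ lies in a single genus; and since the lattice is indefinite of rank $4n+2 \geq 6 > 2$, strong approximation (equivalently, the class number of the relevant unitary group is one in the indefinite case) shows the genus is a single isometry class. Hence the lattice is unique, proving Lemma~\ref{l-pmodular}. The one place to be careful — and the likely main obstacle — is making the dyadic local classification precise, since Hermitian forms over ramified dyadic fields are notoriously delicate; I would either cite the standard classification (e.g. Jacobowitz) or, more in the spirit of this paper, give a direct elementary reduction using the explicit generators of $D_4^{\cG}$ exhibited above.
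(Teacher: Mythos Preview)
Your outline is sound in principle, but it takes a genuinely different route from the paper. The paper does not do a direct Hermitian genus computation; instead it reduces the statement to the classical uniqueness of even self-dual $\Z$-lattices of signature $(8n+2,2)$, citing the analogous argument for Eisenstein lattices in \cite{TB:el}, Lemma~2.6. (In fact the paper then omits the details entirely, since all it actually needs is the isomorphism $4D_4^{\cG}\oplus\cG_{1,1}\simeq\bwg\oplus\cG_{1,1}$, which it exhibits explicitly later.) The point is that passing to the underlying real form lets one invoke the easy and well-known classification of indefinite even unimodular $\Z$-lattices, completely bypassing the delicate dyadic Hermitian local analysis that you correctly flag as the main obstacle in your approach. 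Your route via strong approximation for the unitary group and Jacobowitz-style local classification at the ramified prime would work, but it is substantially heavier machinery for the same conclusion; the paper's reduction buys you the result almost for free once you know the $\Z$-lattice story. One small slip in your sketch: rescaling the Hermitian form by $p^{-1}$ does not give a Hermitian form (since $p$ is not real), so the phrase ``rescaled by $p^{-1}$, is unimodular'' needs to be reformulated --- the correct statement is simply that $K^\vee = p^{-1}K$ as $\cG$-modules, and one works with that directly.
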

Lemma \ref{l-pmodular} quickly follows from the uniqueness of even self-dual $\Z$-lattice of
signature $(8n+2,2)$;
as in the proof of lemma 2.6 of \cite{TB:el}.
The only implication of lemma \ref{l-pmodular} that we need is the isomorphism 
$4 D_4^{\cG} \oplus \cell \simeq \bwg \oplus \cell$. However, for our computational needs,
we shall exhibit an explicit isomorphism $4 D_4^{\cG} \oplus \cell \simeq \bwg \oplus \cell$
in \ref{t-computer}.
So we omit the proof of lemma \ref{l-pmodular}.
\begin{topic}{\bf Notation: }
In the last section, $\Lambda$ denoted any positive definite $p$-modular Gaussian lattice.
From here on, unless otherwise stated, we
specialize to the case 
\begin{equation*}
\Lambda = \bwg
\text{\; and \;} 
L  = \cG_{9,1}  \simeq 4 D_4^{\cG} \oplus \cell \simeq \Lambda \oplus \cell.
\end{equation*}
Both descriptions of $L$ are going to be useful for us. 
In this section, unless otherwise stated, we identify $L = \Lambda \oplus \cell$.
In the next section we shall use the identification $L = 4 D_4^{\cG} \oplus \cell$. 
Let $\rho = (0; 0,1) \in L$. Since $\Lambda$ has no root,
there are no mirrors through the cusp $\rho$. As before, the mirrors closest to $\rho$ are called the 
first shell mirrors; the mirrors that are next closest to $\rho$ are called second shell mirrors, and so on.
The corresponding roots are called first shell roots, second shell roots etc. 
\end{topic}
Our first goal is to show that the projective reflection group $\Gamma = PR(L)$ is an arithmetic lattice
in $P U(9,1)$.
The plan of the proof follows
 Theorem 4.1 of \cite{A:Leech}.
\begin{lemma}
The reflection group $R(L)$ acts transitively on
 primitive null vector of $L$ (considered up to $4$-th roots of unity)
 that are not orthogonal to any roots.
 \label{l-R(L)-is-transitive-on-BW-cusps}
\end{lemma}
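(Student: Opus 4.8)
The plan is to use the height reduction machinery from Section 3 to push any primitive null vector not orthogonal to a root down to the standard cusp $\rho = (0;0,1)$, and then to verify that the group generated by $\mathbb{T}$ together with enough reflections acts transitively on the ``bottom'' null vectors, i.e. those of height at most some small bound near $\rho$.

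First I would set up the height function $\op{ht}$ with respect to $\rho$ as in Section 3. Let $l = (\lambda; h, *)$ be a primitive null vector not orthogonal to any root. If $h = 0$ then $l \in \rho^\perp$ and $\ip{\rho}{l} = 0$, so after scaling we are essentially at $\rho$ itself (or at another cusp in $\rho^\perp/\rho$; but a null vector in $\rho^\perp$ projecting to a nonzero vector of $\Lambda$ would be a Barnes-Wall vector of norm $0$, impossible since $\Lambda$ is positive definite, so in fact $h=0$ forces $l$ proportional to $\rho$). So assume $h \neq 0$. Now apply Lemma \ref{l-height-reduction-condition-at-cusp}: if there exists $\sigma \in \Lambda$ with $(\sigma - \lambda/h)^2 \leq 2$, a first shell reflection decreases the height of $l$; if there exists $\sigma \in p^{-1}\Lambda(2 \bmod 4)$ with $(\sigma - \lambda/h)^2 \leq 1$, a second shell reflection does. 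This is exactly where Lemma \ref{l-covering-lemma} (the covering lemma, $=$ Lemma 6.11 of \cite{A:NC}) enters: the real vector space $\Lambda \otimes_\Z \R$ is covered by balls of radius $\sqrt 2$ around vectors of $\Lambda$ and balls of radius $1$ around $p^{-1}v$ with $v^2 \equiv 2 \bmod 4$. Since $\lambda/h \in \Lambda \otimes_{\cG} \C$, its image in the underlying real space is covered by one of these balls, so one of the two hypotheses of Lemma \ref{l-height-reduction-condition-at-cusp} always holds. Hence for \emph{any} primitive null $l$ with $h \neq 0$ not orthogonal to a root, some reflection in $R(L)$ strictly decreases $\op{ht}(l)$.

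Next, because heights take values in a discrete set (powers of $2$, or more precisely norms $|\ip{l}{\rho}|^2$ which lie in a discrete subset of $[0,\infty)$), the height reduction argument (the principle stated in Lemma 4 of the excerpt) shows that the $R(L)$-orbit of any such $l$ contains a vector of minimal height. The only possibility with $h \neq 0$ and no further reduction available would contradict what we just showed, so the minimal-height representative must have $h = 0$, i.e. be proportional to $\rho$ — but we must be slightly careful: a vector of height strictly less than $\op{ht}(\rho)$-type bound with $h\ne 0$ cannot exist by the covering argument, so the induction terminates at a vector with $\ip{\cdot}{\rho}=0$, forcing proportionality to $\rho$ as noted above. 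Thus every primitive null vector not orthogonal to a root is carried by $R(L)$ into $\C\rho$. Since $R(L) \cap \T$ and the reflections fixing $\rho$ can adjust by units, and since we only consider null vectors up to $4$-th roots of unity, this yields transitivity on $\C^*\rho$ modulo units, completing the proof.

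The main obstacle I anticipate is the bookkeeping at the boundary case: ensuring the induction on height actually terminates \emph{at} $\rho$ and not at some other cusp, and confirming that ``not orthogonal to any root'' is preserved along the way (it is, since $R(L)$ permutes the roots, so if $gl$ were orthogonal to a root $r$ then $l$ would be orthogonal to $g^{-1}r$). One should also double-check that the hypothesis $h\ne 0$ is genuinely forced once the height can no longer be reduced — this is where the structure of $\Lambda = \bwg$ having no norm-zero vectors, hence no null vectors in $\rho^\perp$ other than multiples of $\rho$, is used. Modulo these checks, the argument is a direct application of Lemmas \ref{l-height-reduction-condition-at-cusp} and \ref{l-covering-lemma} together with the height-reduction principle.
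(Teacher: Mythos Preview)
Your proposal is correct and follows essentially the same route as the paper: apply the covering lemma (Lemma~\ref{l-covering-lemma}) to guarantee that the hypothesis of Lemma~\ref{l-height-reduction-condition-at-cusp} always holds when $h\neq 0$, induct on height to reach a null vector with $h=0$, and then use positive definiteness of $\Lambda=\bwg$ to conclude this vector is a unit multiple of $\rho$. The only superfluous part is your invocation of $\T$ and of reflections fixing $\rho$ at the end---once you have reached a unit multiple of $\rho$ you are already done, since the statement is up to $4$-th roots of unity.
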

\begin{proof}
Let $z = (\zeta; h, *)$ be a primitive null vector of $L$ that is not orthogonal to any root.
Suppose $h \neq 0$.
Lemma \ref{l-covering-lemma} implies that either there exists $\sigma \in \Lambda$ such that
$(\sigma - \zeta /h)^2 \leq \sqrt{2}$ or there exists $\sigma \in p^{-1} \Lambda (2 \bmod 4)$ such that
$(\sigma - \zeta/h)^2 \leq 1$. 
Lemma \ref{l-height-reduction-condition-at-cusp} then shows that the height of $z$ can be reduced by a
reflection in some root
(lemma \ref{l-height-reduction-condition-at-cusp} only uses reflections in the first and second shell roots
but this is  irrelevant for this argument).
By induction, it follows that a finite sequence of reflection can bring $z$ to a null vector
of the form $z_1= (\zeta_1; 0, *)$. Now, $z_1^2 = 0$ implies $\zeta_1^2 = 0$.
Since $z_1$ is primitive, it follows that $z_1$ is an unit multiple of $\rho$.
\end{proof}
\begin{theorem}
 $\abs{\Gamma \backslash P \! \op{Aut}(L)} \leq 2^{9} \abs{ \op{Aut}(\Lambda)}$. In particular $\Gamma$
 is arithmetic.
 \label{th-Gamma-is-arithmetic}
\end{theorem}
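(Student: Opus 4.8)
The plan is to bound the number of $P\!\op{Aut}(L)$-orbits (equivalently, double cosets $\Gamma\backslash P\!\op{Aut}(L)$) by producing, for each coset, a representative that lands inside a fixed finite set of symmetries. The skeleton is exactly Theorem 4.1 of \cite{A:Leech}: use the action on cusps to move everything near $\rho$, then use the translation subgroup and the stabilizer of $\rho$ to finish. Concretely, I would argue as follows.

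First, let $g \in P\!\op{Aut}(L)$. Since $\Lambda = \bwg$ has no roots, the cusp $\rho$ is not orthogonal to any root of $L$, and neither is $g^{-1}(\rho)$ (being in the $\op{Aut}(L)$-orbit of $\rho$). By Lemma \ref{l-R(L)-is-transitive-on-BW-cusps}, $R(L)$ acts transitively on primitive null vectors not orthogonal to any root (up to $4$-th roots of unity), so there is $\gamma \in R(L)$ with $\gamma g^{-1}(\rho) = i^r \rho$ for some $r$; absorbing the unit into the projective picture, after replacing $g$ by $\gamma g$ (same $\Gamma$-coset) we may assume $g$ fixes the point $\C\rho \in \partial\B(L)$, i.e.\ $g \in P\!\op{Stab}(\rho)$.

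Next I would analyze $\op{Stab}_{\op{Aut}(L)}(\rho)$. An automorphism fixing $\rho$ preserves $\rho^{\bot}$, acts on $\rho^{\bot}/\rho \cong \Lambda$, and the kernel of this action (modulo the scalar action on $\rho$ itself) is precisely the Heisenberg group $\T$ of Definition (the translations). So $\op{Stab}_{\op{Aut}(L)}(\rho)$ is an extension of a subgroup of $\op{Aut}(\Lambda) \times \langle \text{scalars}\rangle$ by $\T$. Modulo the scalars $\{i^r\}$ (which die in $PU(9,1)$ but contribute a factor we must track — this is where a factor like $2^9$ versus the naive count can enter), the relevant quotient is bounded by $\abs{\op{Aut}(\Lambda)}$ times the index $\abs{(R(L)\cap\T)\backslash\T}$. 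By Corollary \ref{l-R(L)-has-many-translations}(b), that index is at most $\abs{\mathbb{T}_*} = 2\abs{\Lambda/p\Lambda}$, and since $\Lambda$ is $p$-modular of rank $8$ over $\cG$, $\abs{\Lambda/p\Lambda} = 2^{8}$, giving $2^{9}$. Multiplying, $\abs{\Gamma\backslash P\!\op{Aut}(L)} \leq 2^{9}\abs{\op{Aut}(\Lambda)}$. Arithmeticity of $\Gamma$ is then immediate: $P\!\op{Aut}(L)$ is an arithmetic lattice in $PU(9,1)$ (it is the projective isometry group of an integral Hermitian $\cG$-lattice of signature $(9,1)$, and such groups are arithmetic by Borel--Harish-Chandra / the standard theory), and a finite-index subgroup of an arithmetic lattice is an arithmetic lattice.

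The main obstacle I anticipate is the bookkeeping in the extension $1 \to \T \to \op{Stab}_{\op{Aut}(L)}(\rho) \to ? \to 1$: one must check that the ``$?$" is contained in $\op{Aut}(\Lambda)$ together with a cyclic group of scalars acting on the $\rho$-direction, verify which of these scalars are actually realized, and correctly see that passing to the projective group $P\!\op{Aut}(L)$ and intersecting with $\Gamma = PR(L)$ combines with Corollary \ref{l-R(L)-has-many-translations}(b) to produce exactly the stated bound rather than something off by a bounded factor. The transitivity input (Lemma \ref{l-R(L)-is-transitive-on-BW-cusps}) and the translation-coset count (Corollary \ref{l-R(L)-has-many-translations}) are the two real engines; everything else is assembling the extension and counting, and the only subtlety is not losing or double-counting the unit/scalar contributions.
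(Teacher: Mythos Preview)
Your proposal is correct and follows essentially the same route as the paper: reduce to the stabilizer of $\rho$ via Lemma \ref{l-R(L)-is-transitive-on-BW-cusps}, then peel off a translation using Corollary \ref{l-R(L)-has-many-translations}(b), leaving a factor in $\op{Aut}(\Lambda)$ times a scalar. The paper makes the stabilizer analysis concrete by introducing the second null vector $\rho_1 = (0;1,0)$ and observing that $\T$ acts transitively on null vectors of the form $(*;1,*)$, so an element fixing both $\rho$ and $\rho_1$ lies in $\op{Aut}(\Lambda)$; your extension-theoretic description of $\op{Stab}(\rho)$ accomplishes the same thing. One small slip: you apply the transitivity lemma to $g^{-1}(\rho)$ and then left-multiply by $\gamma$, which does not quite match up (left-multiplying preserves the coset $\Gamma g$, but then you need $\gamma g(\rho)\in\C\rho$, i.e.\ apply the lemma to $g(\rho)$); this is harmless for the count and easily fixed.
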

\begin{proof}
Let $\rho_1 = (0; 1, 0)$.
Take $g \in \op{Aut}(L)$. Then $g \rho$ is a primitive null vector that is not orthogonal
to any root.
Lemma \ref{l-R(L)-is-transitive-on-BW-cusps} 
implies that there exists $g_1 \in R(L)$ such that $i^{-m} g_1^{-1} g \rho = \rho$
for some $m \in \Z/4$.
So $i^{-m} g_1^{-1} g \rho_1$ is a null vector of the form $(* ;1, *)$. One verifies that
the group of translations act
transitively on the null vectors of the form $(*; 1, *)$. So there exists a translation
$T \in \op{Aut}(L)$ such that $T^{-1} i^{-m} g_1^{-1} g$ fixes $\rho$ and $ \rho_1$.
So $T^{-1}  i^{-m} g_1^{-1} g = \alpha \in \op{Aut}(\Lambda)$, a finite group.
So $g  = g_1 T \alpha i^m$. By lemma \ref{l-R(L)-has-many-translations}(b), there exists a
 $g_2 \in R(L)$ and $t \in \mathbb{T}_*$ such that 
  such that $T = g_2 t $.
   So $g = g_1 T \alpha i^{m}= g_1 g_2 t \alpha i^m \in R(L) t \alpha i^m$.
 It follows that a full set of coset representatives for $\Gamma \backslash P \! \op{Aut}(L)$ can be
 chosen from the finite set 
$ \lbrace  t \alpha  \colon t \in \mathbb{T}_*, \alpha \in\op{Aut}(\Lambda) \rbrace$.
\end{proof}
The goal of the rest of the section is to find a finite set of generators for $R(L)$.
\begin{lemma}
The $i$-reflections in the first and second shell roots generate $R(L)$.
\label{l-first-two-shells-generate}
\end{lemma}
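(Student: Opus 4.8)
The plan is to run a height-reduction argument on the mirrors of $R(L)$, where the height of a mirror $\B(l^\bot)$ is measured with respect to the cusp $\rho$, exactly as set up in the previous section. Let $R_0$ denote the subgroup of $R(L)$ generated by the $i$-reflections in the first and second shell roots. I want to show $R_0 = R(L)$. Since the $i$-reflections in all roots of $L$ generate $R(L)$, it suffices to show that for every root $l$ of $L$, the $i$-reflection $R_l$ lies in $R_0$; and for this it is enough to show that the $R_0$-orbit of every mirror contains a first shell mirror, because the first shell mirrors and their images under $\mathsf{Aut}(\Lambda)$-type symmetries can be handled directly (in fact one checks that the reflections in the first shell roots through a common point already account for a single $R(L)$-orbit of first-shell mirrors, and $R_0$ certainly contains all first and second shell reflections by definition).

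The heart of the argument is: given a root $l = (\lambda; h, *)$ of $L$ not lying in $\rho^\bot$, i.e. with $h \neq 0$, if $\op{ht}(l) = \abs{h}^2 > 1$, then some element of $R_0$ decreases the height of $l$. To prove this I would invoke Lemma \ref{l-covering-lemma}: the covering lemma applied to the point $\lambda/h \in \Lambda \otimes_\Z \R$ produces either a $\sigma \in \Lambda$ with $(\sigma - \lambda/h)^2 \leq 2$, or a vector $p^{-1}v$ with $v \in \bwg$, $v^2 \equiv 2 \bmod 4$, i.e. a $\sigma \in p^{-1}\Lambda(2 \bmod 4)$ with $(\sigma - \lambda/h)^2 \leq 1$. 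Now if $\abs h > 1$ I can feed the first alternative into Lemma \ref{l-height-reduction-1-2}(a) to get a first shell reflection lowering the height of $l$; but I must also cover the case where only the second alternative holds and $1 < \abs h \leq \sqrt 2$, where Lemma \ref{l-height-reduction-1-2}(b) does not directly apply. Over $\cG$ the only lattice vectors with $1 < \abs h^2 \leq 2$ have $\abs h^2 = 2$, i.e. $h$ is a unit times $p$; so the exceptional range is just $\op{ht}(l) = 2$. For these I would argue separately: a root of height $2$ is, up to a unit and a translation, a second shell root in the explicit list of \ref{l-roots-in-shells-1-2}, so its mirror is already a second shell mirror and the corresponding reflection lies in $R_0$ by definition — there is nothing to reduce. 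Thus by induction on $\op{ht}(l)$, every mirror not through $\rho$ is $R_0$-equivalent to a first or second shell mirror, hence its reflection lies in $R_0$.

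It remains to deal with mirrors that pass through the cusp $\rho$, i.e. roots $l \in \rho^\bot$, which have $h = 0$ so the height machinery above does not apply. Here I would use the translation subgroup: by Corollary \ref{l-R(L)-has-many-translations}, $R_0$ — or at any rate $R(L)$, and I would check the generating translations it produces come from first/second shell reflections via Lemmas \ref{l-r1-r2} and \ref{l-R(L)-has-many-translations}(a), all of which use only first shell roots $r_1, r_2$ and their translates — contains a finite index subgroup of the Heisenberg group $\T$. A translation $T_{\lambda,z}$ with $\lambda \neq 0$ moves $\rho^\bot$ to a mirror no longer containing $\rho$ (one checks $\ip{\rho}{T_{\lambda,z}l} = \bar p^{-1}\ip{\lambda}{\lambda_0}$-type terms are nonzero for suitable choices), so after translating, the previous case applies. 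Concretely: given a root $l$ through $\rho$, pick a first-shell root $r$ with $\ip{r}{l} \neq 0$ (possible since $\Lambda$ spans and the first shell roots surject onto enough directions), apply $R_r$; the image $R_r(l)$ is a root with nonzero $\rho$-component, so by the previous paragraph $R_{R_r(l)} = R_r R_l R_r^{-1} \in R_0$, whence $R_l \in R_0$.

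The main obstacle I anticipate is the bookkeeping in the exceptional height-$2$ case and, relatedly, making sure the "first shell reflections through a common point form a single orbit" claim is clean enough that no further reduction beyond first/second shell is needed — in other words, verifying that the two shells really are a terminal case for the induction and that one does not secretly need to also reduce among first-shell mirrors. I expect this to come down to the explicit description of first and second shell roots in \ref{l-roots-in-shells-1-2} together with transitivity of $\T$ and $\op{Aut}(\Lambda)$ on the relevant configurations, so it is routine but needs care.
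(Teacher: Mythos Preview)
Your approach is essentially the paper's: apply the covering lemma (Lemma~\ref{l-covering-lemma}) to $\lambda/h$ and feed the two alternatives into Lemma~\ref{l-height-reduction-1-2}(a),(b) to reduce height, then induct. Two of your worries are unnecessary, and seeing why will clean up the argument considerably.

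First, the case $h=0$ never occurs. In this section $\Lambda = \bwg$, which has minimum norm $4$ (this is exactly the ``crucial property'' noted in the preliminaries). A root $l=(\lambda;0,*)$ would force $\lambda^2 = 2$, impossible. So every root has $h\neq 0$ and your entire last paragraph, with the translation argument and the $\rho^{\bot}$ mirrors, can be deleted.

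Second, the ``exceptional'' height-$2$ case is not a separate case requiring analysis: it is the base of the induction. The heights $\abs{h}^2$ run through $\{1,2,4,5,\dotsc\}$; if $\abs{h}^2 \leq 2$ the root is already a first or second shell root and $R_l \in R_0$ by definition. If $\abs{h}^2 \geq 4$ then $\abs{h} > \sqrt{2}$, so \emph{both} hypotheses $\abs{h}>1$ and $\abs{h}>\sqrt{2}$ of Lemma~\ref{l-height-reduction-1-2}(a),(b) are satisfied, and whichever alternative the covering lemma hands you can be used. There is no gap at $\abs{h}=\sqrt{2}$ to patch. The paper's proof is accordingly two sentences long.
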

\begin{proof}
The argument is similar to the proof of lemma \ref{l-R(L)-is-transitive-on-BW-cusps} except that one needs
lemma \ref{l-height-reduction-1-2} instead of lemma \ref{l-height-reduction-condition-at-cusp}. 
From lemma \ref{l-covering-lemma}, we know that
closed balls of radius $\sqrt{2}$ around vectors in $\Lambda$ together with closed balls or
radius $1$ around the vectors in $p^{-1} \Lambda(2 \bmod 4)$
cover the underlying real vector space of $\Lambda$. So the lemma follows from
\ref{l-height-reduction-1-2}, parts (a) and (b) using induction on height of a root.
\end{proof}
Lemma \ref{l-first-two-shells-generate} give us an infinite set of reflections that generate $R(L)$. The next lemma shows that
an explicit finite subset of these reflections are enough to generate $R(L)$.
To list the roots of these generating reflections, fix a $\cG$-basis $\lambda_1, \dotsb, \lambda_8$ of $\Lambda$.
For each $j = 1, \dotsb, 8$, fix a $z_j \in \op{Im}(\C)$ such that $ T_{\lambda_j, z_j } \in \T$.
Recall that if $m \in \cG$, then $(\Lambda/ m)^{\sim}$ denotes
a full set of coset representatives for $\Lambda/ m$. Define
\begin{itemize}
\item $S_0 = \lbrace r_k, T_{\lambda_j, z_j} (r_k), T_{ i \lambda_j, z_j}(r_k) \colon j = 1, \dotsb, 8, k = 1, 2 \rbrace$, 
\item $S_1 = \lbrace  \bigl( \sigma; 1, 1 - \tfrac{\sigma^2}{2} + i p k  \bigr) \colon 
\sigma \in (\Lambda/ p)^{\sim},  k = 0, 1 \rbrace$.
\item $ S_2 = \lbrace \bigl(\sigma; \bar{p}, \tfrac{1}{2} (1 - \tfrac{\sigma^2}{2}) + i k \bigr) \colon
\sigma \in (\Lambda/2)^{\sim} \cap \Lambda(2 \bmod 4), k = 0,1 , 2, 3 \rbrace$.
\end{itemize}
From the discussion in \ref{t-q}, recall that all the vectors in a coset in $\Lambda/2$
either have norm $0 \bmod 4$ or have norm $2 \bmod 4$. So
$(\Lambda/2)^{\sim} \cap \Lambda(2 \bmod 4)$ is a set of representatives
of the cosets that consist of vectors of norm $2 \bmod 4$.
\begin{lemma}
The $i$-reflections in the roots in $S_0 \cup S_1 \cup S_2$ generate $R(L)$. 
\label{l-finite-list-of-generators-for-R(L)}
\end{lemma}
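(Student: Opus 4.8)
The plan is to deduce this from Lemma~\ref{l-first-two-shells-generate}, which already asserts that $R(L)$ is generated by the $i$-reflections in \emph{all} first and second shell roots; it then suffices to show that each such reflection lies in the group generated by the $i$-reflections in $S_0\cup S_1\cup S_2$. The main tool is Corollary~\ref{l-R(L)-has-many-translations}(a): its generating set of roots is exactly $S_0$, so the group $G_0=\langle R_s\colon s\in S_0\rangle$ already contains a translation $T_{\lambda,*}$ for every $\lambda\in p\Lambda$ together with all central translations $T_{0,4im}$, $m\in\Z$. Since conjugating $R_{s_0}$ by a translation $T\in G_0$ yields $R_{T(s_0)}$, which then lies in $\langle S_0\cup S_1\cup S_2\rangle$, the whole argument reduces to the following: every first (resp.\ second) shell root can be carried, by a translation available in $G_0$, onto a unit multiple of a root of $S_1$ (resp.\ $S_2$). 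Here I use that $R_{uv}=R_v$ for any unit $u$, so reflections only see the ``non-unit part'' of a root.

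First I would handle the first shell. A first shell root has height $|m|^2=1$, so its middle coordinate $m$ is a unit; multiplying by $m^{-1}$ puts it (without changing its mirror) in the normal form $s=(\sigma';1,\,1-\tfrac{(\sigma')^2}{2}+ipk')$ with $\sigma'\in\Lambda$, $k'\in\Z$. Write $\sigma'=\sigma+p\lambda$ with $\sigma$ in a fixed set of representatives $(\Lambda/p)^{\sim}$ and $\lambda\in\Lambda$, and apply $T_{p\lambda,*}\in G_0$ to a root $(\sigma;1,\,1-\tfrac{\sigma^2}{2}+ipk_0)\in S_1$: the translation fixes the middle coordinate $1$ and sends the first coordinate to $\sigma'$, so (by the norm $2$ condition, which forces the third coordinate to equal $1-\tfrac{(\sigma')^2}{2}+ipk_1$ for some $k_1\in\Z$) it lands on a root of the required shape, with $k_1$ depending bijectively and additively on $k_0$. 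Since the two residues $k_0\in\{0,1\}$ stocked in $S_1$ make $k_1$ run over both parities, and post-composition with the central translations $T_{0,4im}$ shifts this parameter by $2m$, one can arrange $k_1=k'$ exactly. Thus every first shell reflection lies in $\langle S_0\cup S_1\rangle$.

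The second shell case is parallel. Such a root has height $|m|^2=2$, so $m$ is a unit times $\bar p$; after multiplying by the appropriate unit it takes the form $s=(\sigma';\bar p,\,\tfrac12(1-\tfrac{(\sigma')^2}{2})+i\ell')$ with $\sigma'\in\Lambda(2\bmod 4)$ and $\ell'\in\Z$. Now $T_{p\lambda,*}$ shifts the first coordinate of a $\bar p$-vector by $\bar p\cdot p\lambda=2\lambda$, so I write $\sigma'=\sigma+2\lambda$ with $\sigma\in(\Lambda/2)^{\sim}$; the parity homomorphism of \ref{t-q} factors through $\Lambda/2$ (because $2\in p\cG$), so $\sigma\in\Lambda(2\bmod 4)$ as well, and hence $(\sigma;\bar p,\,\tfrac12(1-\tfrac{\sigma^2}{2})+ik)\in S_2$. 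Applying $T_{p\lambda,*}$ lands on a root of the required shape with third-coordinate parameter shifted additively from $k$; since on a $\bar p$-vector the central translations $T_{0,4im}$ move this parameter by $4m$, and $S_2$ carries the four residues $k\in\{0,1,2,3\}$, one can match $\ell'$ exactly. Hence every second shell reflection lies in $\langle S_0\cup S_2\rangle$, and together with Lemma~\ref{l-first-two-shells-generate} this finishes the proof.

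The only real obstacle I anticipate is the bookkeeping in the last two paragraphs: one must verify that the central translations move the third-coordinate parameter by $2\Z$ in the first shell and by $4\Z$ in the second, which is exactly compensated by the residue sets $\{0,1\}$ and $\{0,1,2,3\}$ built into $S_1$ and $S_2$, and that the coset representatives modulo $p$ (resp.\ modulo $2$) match up correctly with the first coordinate of the translated roots. Everything else is a direct application of Corollary~\ref{l-R(L)-has-many-translations} and the shell descriptions, so no genuinely new idea is needed beyond the height-reduction machinery already in place.
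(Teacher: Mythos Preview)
Your proposal is correct and follows essentially the same strategy as the paper: use Corollary~\ref{l-R(L)-has-many-translations}(a) to produce the translations $T_{p\lambda,*}$ and $T_{0,4im}$ inside the group generated by $S_0$, then use these to carry any first or second shell root (up to a unit) to an element of $S_1$ or $S_2$, and conclude via Lemma~\ref{l-first-two-shells-generate}. The only cosmetic difference is directional: the paper translates the given root $s$ into $S_1\cup S_2$ and writes $R_s=T^{-1}R_{Ts}T$, whereas you translate an $S_1\cup S_2$ root out to $s$ and write $R_s=TR_{s_0}T^{-1}$; your more explicit bookkeeping on how the residue sets $\{0,1\}$ and $\{0,1,2,3\}$ match the central-translation shifts by $2\Z$ and $4\Z$ is exactly what the paper leaves as ``similar''.
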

\begin{proof}
Let $G$ be the group generated by the reflections listed. 
Since $G$ contains the reflections in the roots in $S_0$,
lemma \ref{l-R(L)-has-many-translations}(a)
implies that $G$ contains
a translation of the form $T_{p \lambda, *}$ for all $\lambda \in  \Lambda$
and the central translations of the form $T_{0 , 4 i n}$ for $n \in \Z$.
We make the following claim: 
\par
{\it Claim: 
If $s$ is a root of the form $(*; 1, *)$ (resp. $(*; \bar{p}, *)$), then 
there exists a translation $T \in G$ such that $T s$ belongs to $S_1$ (resp. $S_2$).
}
\par
Let $s$ be a root of the form $s = (\sigma; 1, *)$.
Choose $\sigma_0 \in (\Lambda/ p)^{\sim}$ such that
$\sigma_0 - \sigma = p \lambda$ for some $\lambda \in \Lambda$.
Choose a translation of the form $T_{p \lambda, *} \in G$ that takes 
$s$ to a root of the form $s' = (\sigma_0; 1, *)$. Next, one can choose
a central translation in $G$ that takes  $s'$
to a root in $S_1$. This proves the claim for roots
of the form $(*; 1, *)$ \footnote{$\T$ acts simply transitively
on the roots of the form $(*;1,*)$.
So the argument here is
essentially a repeat of the proof of
lemma \ref{l-R(L)-has-many-translations}(b).}.
The argument for roots of the form
$(*; \bar{p}, *)$ is similar. 
\par
 Now let $R$ be an $i$-reflection in a root in the first or second shell.
Then there exists a root $s$ of the form $ (*; 1, *)$ or $ (*; \bar{p}, *)$
such that $R = R_s$. Choose  $T \in G$ such that $T s \in S_1 \cup S_2$.
So $R_{T s} \in G$. It follows that $R_s = T^{-1} R_{T s} T \in G$.
Thus, $G$ contains the $i$-reflections in all the roots in the first two shells.
 Lemma \ref{l-first-two-shells-generate} completes the proof. 
\end{proof}
%
%
%*******************************************************************************************************
%
%*******************************************************************************************************
%
\section{The thirty two mirrors closest to a point in $\C H^9$}
The goal of this section is to find nice generators and relations for $R(L)$,
analogous to the Coxeter generators and relations of Weyl groups.
This is only a rough analogy; for example the generators are 
not a minimal set and we do not know if the our relations are sufficient to give a presentation
of $R(L)$. However, the analogy is reinforced because similar phenomenon repeats
for other complex hyperbolic reflection groups of interesting lattices, as illustrated by Theorem \ref{th-1}.
In particular, everything in this section closely parallels the results of \cite{TB:el} and \cite{TB:ql}.
\par
As for Coxeter groups, our generators and relations can be encoded in a diagram $D$, a sort 
of Coxeter-Dynkin diagram of $R(L)$. This diagram can be defined from the intersection pattern of
a configuration of points and hyperplanes in the finite vector space $\F_2^4$.
We shall define the lattice $L$ starting from $D$, rather
like defining the root lattice from a Cartan matrix.
We start by describing these points and hyperplanes and by working out 
the symmetries of this configuration.
\begin{topic}{\bf A configuration of points and hyperplanes in $\F_2^4$: }
 Let 
  \begin{equation*}
 g_1 = (1,0,0,0), \dotsb, g_4 = (0,0,0,1)
 \end{equation*}
be the standard basis vectors of $\F_2^4$. 
The sixteen points of $\F_2^4$ are named $ a$, $c_j$, $g_j$, $e_{i j}$, $z$
where
\begin{equation*}
a = (1,1,1,1), \;\; c_i = a + g_i, \;\; e_{ i j} = a + g_i + g_j, \;\; z = (0,0,0,0)
\end{equation*}
and $1 \leq i < j \leq 4$.
Next, we name sixteen hyperplanes in $\F_2^4$.
Let 
\begin{equation*}
d_k = \lbrace (x_1, x_2, x_3, x_4) \in \F_2^4 \colon x_k = 0 \rbrace
\text{\; and \;} 
f_k = \lbrace (x_1, x_2, x_3, x_4) \in \F_2^4 \colon x_k = \sum_j x_j \rbrace.
\end{equation*}
So $\cK_0 = \lbrace d_1, \dotsb, d_4, f_1, \dotsb, f_4 \rbrace$
is the set of homogeneous hyperplanes in $\F_2^4$ not containing $a$.
Let 
\begin{equation*}
b_k = \F_2^4 - f_k \text{\; and  \;} h_k = \F_2^4 - d_k.
\end{equation*}
Finally let 
$\cK = \lbrace d_k, f_k, b_k, h_k \colon k = 1, 2, 3, 4 \rbrace$
be the set of translates of $\cK_0$ and
let 
\begin{equation*}
D = \F_2^4 \cup \cK.
\end{equation*}
Let $Q_+$ be the subgroup of the group of affine transformations
of $\F_2^4$ that preserve $D$. One verifies that $Q_+ \simeq 2^4 : (2^3 : L_3(2))$
where the $2^4$ comes from the translation action of $\F_2^4$ on itself
and the $2^3 : L_3(2)$ is the stabilizer of $a$ in $L_4(2)$.
It will be useful to note that the symmetric group $S_4$ acting by coordinate permutation
on $\F_2^4$ fixes $a$. So this $S_4$ is a subgroup of $Q_+$.
\par
The diagram for our reflection group $R(L)$ is  shown in figure \ref{fig-X3333}.
It is a graph with vertex set $D$
and with two kinds of edges as shown in figure \ref{fig-X3333}.
Two vertices $u$ and $v$ are joined by a dotted edge if $v=t_a(u)$
where $t_a$ is the automorphism of $D$ induced by the translation $t_a: \F_2^4 \to \F_2^4$
defined by $t_a(v) = a + v$.
If we ignore the dotted edges, then $D$ should be thought of a directed  bipartite graph
with a directed edge from $v$ to $u$ if $v \in \cK$, $u \in \F_2^4$ and $u$ is incident
on $v$. The automorphism group $Q_+$ acts on this graph $D$ preserving both kinds of 
edges.
\end{topic}
\begin{lemma}
Let $L^{\circ}$ be the Gaussian lattice of rank $32$ with a basis 
$\lbrace s_v^{\circ} \colon v \in D \rbrace$ indexed by $D$ and inner product
defined by
\begin{equation}
\ip{s_u^{\circ}}{s_v^{\circ}} = 
\begin{cases}
2 & \text{\; if \;} u = v, \\
p & \text{\; if \;} u \in \F_2^4, v \in \cK \text{\; and \;} u \in v, \\
\bar{p} & \text{\; if \;} u \in \cK, v \in \F_2^4 \text{\; and \;} v \in u, \\
-2 & \text{\; if \;} v = t_a(u), \\
0 & \text{\; otherwise}.
\end{cases}
\label{eq-ipD}
\end{equation}
Then $L \simeq L^{\circ}/ \op{Rad}(L^{\circ})$.
\label{l-L-from-D}
\end{lemma}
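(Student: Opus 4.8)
The plan is to exhibit an explicit surjective isometry $\phi\colon L^{\circ}\to L$; the lemma is then formal. Indeed, suppose $\phi\colon L^{\circ}\to L$ is $\cG$--linear, onto, and preserves the Hermitian form. If $\phi(x)=0$ then $\ip{x}{y}=\ip{\phi(x)}{\phi(y)}=0$ for every $y\in L^{\circ}$, so $\ker\phi\subseteq\op{Rad}(L^{\circ})$. Conversely, if $x\in\op{Rad}(L^{\circ})$ then $\ip{\phi(x)}{\phi(y)}=0$ for all $y$, and since $\phi$ is onto this says $\phi(x)\in\op{Rad}(L)=0$ ($L$ being nonsingular by definition). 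Hence $\ker\phi=\op{Rad}(L^{\circ})$ and $\phi$ descends to an isomorphism of Hermitian $\cG$--lattices $L^{\circ}/\op{Rad}(L^{\circ})\cong L$. So it is enough to produce vectors $s_v\in L$, one for each $v\in D$, whose Gram matrix is \eqref{eq-ipD} and whose $\cG$--span is all of $L$.

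\textbf{The thirty-two vectors.} I would construct the $s_v$ inside the model $L=4D_4^{\cG}\oplus\cell$ used throughout this section, taking for $s_v$ a suitable unit multiple of one of the roots of $L$ listed in \ref{l-roots-in-shells-1-2} (these $32$ will turn out to be precisely the roots whose mirrors are equidistant from the point $\tau$). To keep the verification short I would exploit symmetry. The group $Q_{+}$ acts on $D$ with exactly two orbits, the sixteen points of $\F_2^4$ and the sixteen hyperplanes in $\cK$, and the part of $Q_{+}$ already visible as automorphisms of $L$ — the symmetric group $S_4$ permuting the four $D_4^{\cG}$ blocks, the translations $\T$, and the automorphism $\beta$ of \ref{l-r1-r2} — suffices to carry a few orbit representatives onto all of $D$. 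So I would guess $s_v$ only for a small number of representatives (say $s_a$, $s_{d_1}$ and $s_{f_1}$) and transport the rest, after which \eqref{eq-ipD} reduces to checking finitely many inner products, one per pair of orbits. The combinatorial content of \eqref{eq-ipD} is exactly the incidence geometry of $\F_2^4$: each point lies on precisely eight of the sixteen hyperplanes in $\cK$, and $t_a$ is the fixed--point--free involution with $d_k\leftrightarrow h_k$, $f_k\leftrightarrow b_k$, $u\leftrightarrow u+a$; the entries $p,\bar p,-2$ of \eqref{eq-ipD} record these incident pairs and these $t_a$--pairs.

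\textbf{Spanning and conclusion.} To see that $\{s_v\colon v\in D\}$ spans $L$ it suffices to single out ten of the $s_v$ that are $\cG$--linearly independent and for which the corresponding $10\times 10$ block of \eqref{eq-ipD} has determinant $-32$ in $\cG$; this is the $\cG$--Gram determinant of $\cG_{9,1}$ (whose underlying $\Z$--lattice has signature $(18,2)$ and discriminant $2^{10}$), so an index comparison forces the $\cG$--span of those ten vectors, hence of all the $s_v$, to be $L$ itself. Then $\phi\colon s_v^{\circ}\mapsto s_v$ is the required surjective isometry, and $L\cong L^{\circ}/\op{Rad}(L^{\circ})$.

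\textbf{Main difficulty, and an alternative.} The real work is the middle step: arranging the choice of the thirty-two vectors so that the $Q_{+}$--symmetry genuinely collapses the $\binom{32}{2}+32$ entries of \eqref{eq-ipD} to a handful of orbit computations rather than an unstructured (computer) check. A route that avoids exhibiting the vectors is to verify directly from \eqref{eq-ipD} that the Hermitian form on $L^{\circ}$ has rank $10$ over $\Q[i]$, has signature $(9,1)$, and that $L^{\circ}/\op{Rad}(L^{\circ})$ is $p$--modular — every entry of \eqref{eq-ipD} already lies in $p\cG$, so $p^{-1}\bigl(L^{\circ}/\op{Rad}(L^{\circ})\bigr)\subseteq\bigl(L^{\circ}/\op{Rad}(L^{\circ})\bigr)^{\vee}$, and one needs only that a nonsingular $10\times 10$ block of \eqref{eq-ipD} has determinant of absolute value $32$ for equality — and then to quote the uniqueness Lemma~\ref{l-pmodular}. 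This replaces the spanning argument by a larger but routine linear--algebra computation, the delicate point being the signature.
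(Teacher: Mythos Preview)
Your overall strategy---exhibit vectors $s_v\in L=4D_4^{\cG}\oplus\cell$ realising the Gram matrix \eqref{eq-ipD}, then invoke the formal argument that any inner-product-preserving surjection $L^{\circ}\to L$ has kernel exactly $\op{Rad}(L^{\circ})$---is correct and is exactly what the paper does. Your care about surjectivity (via a $10\times 10$ Gram-determinant comparison, or alternatively via the uniqueness Lemma~\ref{l-pmodular}) is in fact more explicit than the paper, which simply lists the vectors and leaves spanning to the reader.

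The one place your plan diverges from the paper and becomes shaky is the proposed use of $\T$ and $\beta$ from \ref{l-r1-r2} to realise the $2^4$ part of $Q_{+}$ and thereby cut the number of seed vectors down to three. Those automorphisms are defined relative to the cusp $\rho=(0;0,1)$ in the $\bwg\oplus\cell$ model; there is no a~priori reason they should permute the (not-yet-constructed) $s_v$ compatibly with the translation action of $\F_2^4$ on $D$. Indeed, once the $s_v$ are in hand one sees (cf.~\ref{t-configuration}) that the sixteen point-mirrors meet at a different cusp $p_{\infty}$ and the sixteen hyperplane-mirrors at yet another cusp $l_{\infty}$; the Heisenberg groups relevant to the $2^4$ are those at $p_{\infty}$ and $l_{\infty}$, which you cannot name until you already have the $s_v$. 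So this shortcut is circular. The paper avoids this by using only the manifest $S_4$ symmetry permuting the four $D_4^{\cG}$ summands: there are nine $S_4$-orbits on $D$ (namely $\{a\},\{z\},\{c_i\},\{g_i\},\{b_i\},\{d_i\},\{f_i\},\{h_i\},\{e_{ij}\}$), and the paper simply writes down one explicit root for each and checks \eqref{eq-ipD} on the resulting $9\times 9$ block of orbit representatives. Your alternative route through Lemma~\ref{l-pmodular} is legitimate but, as you note, still needs rank, signature, and $p$-modularity of the quotient, which is comparable work.
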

\begin{proof}
Identify $L = 4 D_4^{\cG} \oplus \cell$. We claim that there are
 $32$ roots $\lbrace s_v \colon v \in D \rbrace$ in $L$
such that the inner products between them are
governed by $D$ as in equation \eqref{eq-ipD}. In other words,
there is an inner product preserving linear map $L^{\circ} \to L$ by
that sends $s_v^{\circ}$ to $s_v$. Recall that there is an obvious $S_4$ action
on $D$. The symmetric group $S_4$ also acts on $4D_4^{\cG} \oplus \cell$
by permuting the four copies of $D_4^{\cG}$. The map 
$s_v^{\circ} \mapsto s_v$ is going to be $S_4$ equivariant.
Define
\begin{align*}
 s_a          &=\ph [ \ph 0, 0,   \ph 0,  0,  \ph 0,  0, \ph 0,  0 ; \ph\ph -1, -1],\\
s_{c_1}      &=\ph [      -1, 1,   \ph 0,  0,  \ph 0,  0, \ph  0,  0 ; \ph\ph\ph \;\; 0,  \ph0],\\
s_{e_{12}} &=     -[  \ph 1, 1,  \ph 1,  1,  \ph 0,  0, \ph 0, 0 ; \ph\ph\ph \;\; i, \ph1],\\
s_{  g_1 }   &=     -[  \ph 0, 2,  \ph 1,  1,   \ph 1,  1, \ph 1,  1; \ph\ph\ph 2 i, \ph2],\\
 s_{ z    }    &=     -[  \ph 1, 1,  \ph 1, 1,   \ph 1, 1,   \ph 1, 1; -1 + 2 i,\ph 1],\\
s_{f_1  }     &= \ph[  \ph 0, 0,  \ph 0,  p,  \ph 0,  p,  \ph  0,  p; \ph\ph \ph i p, \ph p], \\
 s_{b_1  }   &= \ph[  \ph 0, p,  \ph 0,  0,   \ph 0,  0,  \ph 0,  0; \ph\ph -1, \ph 0],\\
 s_{d_1 }   &=  \ph[       -p, 0,  \ph 0,  0,   \ph 0,  0, \ph  0,  0;\ph\ph \ph \;\;  0, \ph 0],\\
s_{ h_1}    &= \ph[   \ph p, p,  \ph 0,  p,   \ph 0,  p, \ph  0,  p; \ph \; p-3,\ph p].
\end{align*}
The other roots are obtained by using the $S_4$ symmetry (that is, by permuting the
four copies of $D_4^{\cG}$).
For example 
$s_{c_2}   =[  0, 0, \;  -1, 1, \; 0,  0,\;   0,  0 ; \;\; 0,  0]$.
The lemma is proved once one verifies that these roots have the required inner products.
\end{proof}
\begin{remark}
We want to mention how we came across the $32$ roots $\lbrace s_v \colon v \in D \rbrace$.
The complex reflection group of $D_4^{\cG}$ is generated by two braiding $i$-reflections.
In other words, $R(D_4^{\cG})$ has Dynkin diagram $A_2$ with each vertex having order $4$.
Write $L_* = 3D_4^{\cG} \oplus \cell$.
Affinizing and hyperbolizing (in the sense of \cite{CS:SPLAG}, chapter 30) one gets a diagram $Y_{3 3 3}$ inside 
$R(L_*)$. The graph $Y_{3 3 3}$ is a maximal sub-tree in $\op{Inc}(P^2(\F_2))$, which means the incidence 
graph of finite projective plane $P^2(\F_2)$. 
One can extend the 
$Y_{3 3 3}$ diagram uniquely to a $ \op{Inc}( P^2(\F_2))$ diagram in $R(L_*)$.
The diagram automorphisms $ L_3(2) : 2$ act on $\mathbb{B}(L_*)$ with a unique fixed point
$\tau_*$ and the $14$ mirrors corresponding to the vertices of $ \op{Inc}( P^2(\F_2))$
are exactly the mirrors closest to $\tau$.
We tried to prove the results similar to theorem \ref{th-1} for the reflection group $R(L_*)$ but
 could not make the arguments work because
the covering radius of the lattice $3D_4^{\cG}$ is not small enough.
However, these arguments  work for 
$L = 4D_4^{\cG} \oplus \cell$ because of the alternative description $L \simeq \bwg \oplus \cell$.
In the root system of 
$L = 4D_4^{\cG} \oplus \cell$, we can naturally extend the $ \op{Inc}( P^2(\F_2))$
diagram, first by using 
the obvious $S_4$ symmetry permuting the four copies of $D_4^{\cG}$ and then by looking for a 
regular graph. This leads to a set of $32$ root diagram $D$ in $L$.
\end{remark}
\begin{topic}{\bf Linear relations among the $32$ roots:} 
One could also prove lemma \ref{l-L-from-D} by working out the $22$ dimensional radical of $L^{\circ}$.
It is useful for us to at least write down
enough linear relations among the $32$ roots $\lbrace s_v \colon v \in D \rbrace$,
where enough means that the corresponding vectors of $\op{Rad}(L^{\circ})$ span
 $\op{Rad}(L^{\circ}) \otimes \C$. 
If both $v, w \in \F_2^4$ or both $v,  w \in \cK$, then we have the relation
\begin{equation}
s_v + s_{t_a(v)}  = s_w + s_{t_a(w)}.
\label{eq-translation-relation}
\end{equation}
We define
\begin{equation*}
p_{\infty} = s_v + s_{t_a(v)} \text{\; if \;} v \in \F_2^4,
\end{equation*}
and
\begin{equation*}
l_{\infty} = s_v + s_{t_a(v)} \text{\; if \;} v \in \cK.
\end{equation*}
Using  \eqref{eq-ipD}, one verifies that $p_{\infty}$ and $l_{\infty}$ are primitive
null vectors of $L$.
Explicitly, one computes
\begin{align*}
p_{\infty} = -(1, 1, 1, 1, 1, 1, 1, 1;  2i , 2)
\text{\; and  \;}
l_{\infty} = (0,p,0,p,0,p,0,p ; p-3, p).
\end{align*}
Further, for each $u \in \F_2^4$ and for each $w \in \cK$, we have the relations 
\begin{equation}
- 2 (1 + i) s_u + \sum_{v \in \cK : u \in v} s_v = 4 l_{\infty} - (1 + i) p_{\infty},
\label{eq-points-from-hyperplanes}
\end{equation}
and
\begin{equation}
- 2 (1 - i ) s_w + \sum_{v \in \F_2^4: v \in w} s_v = 4 p_{\infty} - (1 - i) l_{\infty}.
\label{eq-hyperplanes-from-points}
\end{equation}
To verify the relation \eqref{eq-translation-relation}, one just checks, using
\eqref{eq-ipD}, that both sides of \eqref{eq-translation-relation} have the same
inner product with each vector in $\lbrace s_v \colon v \in D \rbrace$. 
The relations in \eqref{eq-points-from-hyperplanes} 
and \eqref{eq-hyperplanes-from-points} can be verified similarly, or by direct computation.
\end{topic}
\begin{topic}{\bf The configuration of the $32$ mirrors in complex hyperbolic space:}
\label{t-configuration}
One verifies that the mirrors $\lbrace s_v^{\bot} \colon v \in \F_2^4 \rbrace$ meet at the
cusp determined by $p_{\infty}$ and the mirrors 
$\lbrace s_v^{\bot} \colon v \in \cK \rbrace$ meet at the cusp determined by $l_{\infty}$. 
The group $Q_+ = 2^4 \colon (2^3 \colon L_3(2))$ transitively permutes these two sets
of sixteen mirrors and fixes the $\C H^1$ spanned by $p_{\infty}$ and $l_{\infty}$.
The set of thirty two mirrors $\lbrace s_v^{\bot} \colon v \in D \rbrace$ has an 
extra symmetry, that is described below.
Let $\sigma_{\cK} : \cK \to \F_2^4$ be the bijection
\begin{equation*}
\sigma_{\cK}(\lbrace (x_1, \dotsb, x_4) \in \F_2^4: \sum_{j} u_j x_j = \epsilon \rbrace) = (u_1, u_2, u_3, 1) + (u_4 + \epsilon) (1,1,1,1).
\end{equation*}
Define $\sigma_D: D \to D$ by $\sigma_D \vert_{\cK} = \sigma_{\cK}$ and $\sigma_D \vert_{\F_2^4} = \sigma_{\cK}^{-1}$.
One verifies that the involution $\sigma_D$ preserves the incidence relations between points and
hyperplanes and
commutes with the action of the translation $t_a$
\footnote{If the definition of $\sigma_{\cK}$ seem too ad-hoc,
the following discussion may help. Identify
$\F_2^4$ and $\cK$  with two affine hyperplanes
$\mathfrak{F} = \lbrace x \in \F_2^5 \colon x_5 = 1 \rbrace$ and 
$\mathfrak{K} = \lbrace x \in \F_2^5 \colon x_1 + x_2 + x_3 + x_4 = 1  \rbrace$
in $\F_2^5$ respectively
via  
\begin{equation*}
i_{\mathfrak{F}}: v \mapsto \bigl( \begin{smallmatrix} v \\ 1 \end{smallmatrix} \bigr) \text{\; and \;} 
i_{\mathfrak{K}}:  \lbrace x \in \F_2^4 \colon  u^T  x = \epsilon \rbrace \mapsto \bigl( \begin{smallmatrix} u \\ \epsilon \end{smallmatrix} \bigr).
\end{equation*}
Note that $v \in \F_2^4$ belongs to the hyperplane
$\lbrace x \in \F_2^4 \colon u \cdot x = \epsilon \rbrace$
if and only if 
$\bigl( \begin{smallmatrix} v \\ 1 \end{smallmatrix} \bigr)$
and
$\bigl( \begin{smallmatrix} u  \\ \epsilon \end{smallmatrix} \bigr)$
are orthogonal with respect to the standard inner product
of $\F_2^5$. We need to choose an appropriate $\sigma_{\mathfrak{K}} \in L_5(2)$ 
taking $\mathfrak{K}$ to $\mathfrak{F}$.
For this, let $J_{m,n}$ denote the $m \times n$ matrix all whose entries are equal to $1$.
Let $H = \bigl( \begin{smallmatrix} 0 & 1 \\ 1 & 0 \end{smallmatrix} \bigr)$. 
Let $I_n$ denote the $n \times n$ identity matrix. Define 
$\sigma_{\mathfrak{K}} = \bigl( \begin{smallmatrix} I_3 & J_{3,2} \\ J_{2,3} & H \end{smallmatrix} \bigr) \in L_5(2)$.
Verify that 
$\sigma_{\mathfrak{K}}^{-1}: \mathfrak{F} \to \mathfrak{K}$
 and $\sigma_{\mathfrak{K}} : \mathfrak{K} \to \mathfrak{F}$ are mutually inverse bijections
 and $\sigma_{\mathfrak{K}}^2$ is an involution.
Let $f \in \mathfrak{F}$ and $k \in \mathfrak{K}$. 
Since $\sigma_{\mathfrak{K}}$ is self adjoint,
$ \sigma_{\mathfrak{K}}^{-1} f$ is orthogonal to $\sigma_{\mathfrak{K}} k$ if and only $k$ is orthogonal to $f$.
Let 
$\sigma_{\cK} =i_{\mathfrak{F}}^{-1} \circ \sigma_{\mathfrak{K}} \circ i_{\mathfrak{K}}$.
Then $\sigma_{\cK} : \cK \to \F_2^4$ is a bijection such that $v \in h$ if and only if $\sigma(h) \in \sigma(v)$
for all $v \in \F_2^4$ and for all $h \in\cK$. In other words, $\sigma_{\cK}$ and $\sigma_{\cK}^{-1}$
interchanges the points in $\F_2^4$ with the hyperplanes in $\cK$ preserving the incidence relations.
For $w \in \F_2^4$, the translation $t_w$ acts on $\fF$ and $\fK$ as the matrices 
$t_w \vert_{\fF} = \bigl( \begin{smallmatrix} I_4 & w \\ 0 & 1 \end{smallmatrix} \bigr)$
and
$t_w \vert_{\fK} = \bigl( \begin{smallmatrix} I_4 & 0 \\ w^T & 1 \end{smallmatrix} \bigr)$.
One verifies that 
$\bigl( t_w \vert_{\fF} \bigr) \sigma_{\fK}  
= \sigma_{\fK} \bigl( t_w \vert_{\fK} \bigr)$ if
and only if $w = a$ or $w = 0$.
}. 
It follows that $\sigma_D$ is an orientation reversing involution of $D$.
In other words, the involution $\sigma_D$ acts on $D$ by preserving both kinds of edges and
reversing the orientation on the solid edges.
Define 
\begin{equation*}
\sigma_{L^{\circ}} : L^{\circ} \to L^{\circ}
\text{\; by \;}
\sigma_{L^{\circ}} (s_v^{\circ}) =
\begin{cases}
 s_{ \sigma_D(v)}^{\circ} & \text{\; if  \;} v \in \cK, \\ 
 -i s_{ \sigma_D(v)}^{\circ}. & \text{\; if \;}   v \in \F_2^4.
 \end{cases}
\end{equation*}
From definition of inner product on $L^{\circ}$, it follows
that $\sigma_{L^{\circ}}$ is an inner product preserving isomorphism of $L^{\circ}$
whose square is multiplication by $-i$. So $\sigma_{L^{\circ}}$ 
descends to define an automorphism $\sigma = \sigma_L$ 
of $L$ and an involution of $\B(L)$, also denoted by $\sigma$.
 This involution $\sigma$ interchanges the sixteen mirrors meeting at $p_{\infty}$
with the sixteen mirrors meeting at $l_{\infty}$. 
One verifies that the group $Q$ generated by $\sigma$ and $Q_+$
permutes the $32$ mirrors transitively and fixes a unique point in
$\B(L)$ which can be represented by the vector
\begin{equation*}
\tau = e^{-\pi i/4} l_{\infty} - p_{\infty}.
\end{equation*}
The $32$ mirrors are all equidistant from $\tau$. Let $d_0$ be this distance. We compute
\begin{equation*}
d_0 = d(\tau, s_v^{\bot}) \approx 0.4090 \text{\; for all \; } v \in D.
\end{equation*}
As already mentioned, we are using the same notation for a vector, say $\tau$ (resp.
a hyperplane, say $s_j^{\bot}$)
in $L$ and the point  (resp. hyperplane) in $\B(L)$ it determines. 
Let $B$ be a small horoball around $p_{\infty}$ not containing $\tau$ and let
$B'$ be the image of $B$ under any automorphism of $L$ taking $p_{\infty}$ 
to $l_{\infty}$. Then $\tau$ is the point on the real geodesic joining $p_{\infty}$
and $l_{\infty}$ that is equidistant from $B$ and $B'$.
We should think of $\tau$ as the mid-point between $p_{\infty}$ and $l_{\infty}$.
\end{topic}
\begin{theorem} The $32$ mirrors $\lbrace s_v^{\bot} \colon v \in D \rbrace$
 are precisely the mirrors closest to $\tau$.
In particular, $\tau$ does not lie on any mirror.
\label{th-simple-mirrors}
\end{theorem}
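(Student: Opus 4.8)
\emph{Approach and reduction.} The plan is to convert the statement into a lower bound on inner products and then run height reduction at the two cusps $p_{\infty}$ and $l_{\infty}$. Since every root $r$ of $L$ has $r^{2}=2$, the distance formula of \ref{t-complex-hyperbolic-space} gives
\begin{equation*}
d(r^{\bot},\tau)=\sinh^{-1}\sqrt{\tfrac{\abs{\ip{r}{\tau}}^{2}}{-2\tau^{2}}},
\end{equation*}
so $r^{\bot}$ is at least as far from $\tau$ as the thirty-two mirrors exactly when $\abs{\ip{r}{\tau}}^{2}\ge c_{0}:=-2\tau^{2}\sinh^{2}d_{0}$. First I would record $c_{0}$: a direct computation with $\tau=e^{-\pi i/4}l_{\infty}-p_{\infty}$, using $p_{\infty}^{2}=l_{\infty}^{2}=0$ and $\ip{p_{\infty}}{l_{\infty}}=2p$ (read off the explicit coordinates in \ref{t-configuration}), gives $\tau^{2}=-4\sqrt{2}$ and $\abs{\ip{s_{v}}{\tau}}^{2}=2$ for every $v\in D$; hence $c_{0}=2$. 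So the theorem is equivalent to the assertion that \emph{every root $r$ of $L$ satisfies $\abs{\ip{r}{\tau}}^{2}\ge 2$, with equality only when $r$ is a unit multiple of some $s_{v}$, $v\in D$}, and the last sentence of the theorem then follows since $2>0$.

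\emph{Symmetry and the case through $p_{\infty}$.} Next I would use the involution $\sigma=\sigma_{L}$ of \ref{t-configuration}: it is an isometry of $\B(L)$ fixing $\tau$, it swaps the null lines $\C p_{\infty}$ and $\C l_{\infty}$, and it permutes the roots while interchanging $\{s_{v}:v\in\F_{2}^{4}\}$ with $\{s_{v}:v\in\cK\}$. Hence $\abs{\ip{\sigma r}{p_{\infty}}}=\abs{\ip{r}{l_{\infty}}}$, $\abs{\ip{\sigma r}{l_{\infty}}}=\abs{\ip{r}{p_{\infty}}}$ and $\abs{\ip{\sigma r}{\tau}}=\abs{\ip{r}{\tau}}$, so it suffices to prove the displayed assertion for roots $r$ with $\abs{\ip{r}{p_{\infty}}}\le\abs{\ip{r}{l_{\infty}}}$ and then apply $\sigma$. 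If such an $r$ has $\ip{r}{p_{\infty}}=0$, then $\ip{r}{\tau}=e^{-\pi i/4}\ip{r}{l_{\infty}}$, so $\abs{\ip{r}{\tau}}^{2}=\abs{\ip{r}{l_{\infty}}}^{2}$, and $r$ projects to a root of the rank $8$ positive definite $p$-modular lattice $N=p_{\infty}^{\bot}/p_{\infty}$ (which turns out to be $4D_4^{\cG}$). If moreover $\ip{r}{l_{\infty}}=0$, then $r$ lies in $(\C p_{\infty}+\C l_{\infty})^{\bot}\cap L$, which one identifies with $\bwg$ and which therefore has no roots; so this cannot occur. It then remains to enumerate the finitely many root classes of $N$ and, for each, the minimum of $\abs{\ip{r}{l_{\infty}}}^{2}$ over its lifts (these run over a fixed coset of $2p\cG$ in $\cG$), a finite $Q_{+}$-equivariant computation, and to verify that this minimum is always $\ge 2$, with equality exactly for $r$ a unit multiple of $s_{v}$, $v\in\F_{2}^{4}$.

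\emph{The case through neither cusp.} Suppose now $\ip{r}{p_{\infty}}\ne 0$, hence also $\ip{r}{l_{\infty}}\ne 0$ (apply the previous case to $\sigma r$); then $r^{\bot}$ passes through neither cusp. Let $x$ be the foot of the perpendicular from $\tau$ to $r^{\bot}$, so $d(x,\tau)=d(r^{\bot},\tau)$. If $\abs{\ip{r}{\tau}}^{2}<2$ then $d(x,\tau)<d_{0}$, and the ideal triangle inequality (Lemma \ref{l-iti}) applied at $p_{\infty}$ and at $l_{\infty}$ gives $d_{p_{\infty}}(x)<d_{p_{\infty}}(\tau)+d_{0}$ and $d_{l_{\infty}}(x)<d_{l_{\infty}}(\tau)+d_{0}$. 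Since $x\in r^{\bot}$, one has $d_{p_{\infty}}(x)\ge\tfrac12\log(\abs{\ip{r}{p_{\infty}}}^{2}/2)$ and $d_{l_{\infty}}(x)\ge\tfrac12\log(\abs{\ip{r}{l_{\infty}}}^{2}/2)$ (the right-hand sides being the horocyclic distances of $r^{\bot}$ to the two cusps); combined with the explicit values $d_{p_{\infty}}(\tau)=d_{l_{\infty}}(\tau)=\tfrac14\log 2$, this forces $r$ to lie in the first or second shell with respect to $p_{\infty}$ \emph{and} with respect to $l_{\infty}$. Writing $L=N\oplus(\text{hyperbolic cell at }p_{\infty})$ and using the explicit lists of first- and second-shell roots from \ref{l-roots-in-shells-1-2}, imposing the analogous condition at $l_{\infty}$ simultaneously leaves only finitely many roots $r$; for each of them one computes $\abs{\ip{r}{\tau}}^{2}$ and checks directly that it exceeds $2$. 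Combining this with the previous case and applying $\sigma$ proves the claim, hence the theorem.

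\emph{Main obstacle.} The delicate step is the last one: arranging the double height reduction so that the list of roots lying simultaneously in low shells at both cusps is genuinely finite and short enough to be checked by hand (or machine), and correctly pinning down the shell structure at $p_{\infty}$ and at $l_{\infty}$ (and the resulting bound $\abs{\ip{r}{p_{\infty}}}^{2},\abs{\ip{r}{l_{\infty}}}^{2}\in\{2,4\}$). The finite root computation inside $N$ and the identification $(\C p_{\infty}+\C l_{\infty})^{\bot}\cap L\simeq\bwg$ are more mechanical, but still need to be carried out with care.
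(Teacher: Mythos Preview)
Your approach is genuinely different from the paper's, and the outline is sound, but you are underselling the work hidden in the ``mechanical'' steps.

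The paper does not use the two symmetric cusps at all. It fixes the basis $v_1,\dots,v_{10}$ consisting of eight of the $s_v$ together with the null vectors $v_{10}=l_\infty$ and $v_9=(0^8;1,0)$ (the natural $4D_4^{\cG}$-cusp, \emph{not} $p_\infty$), writes a hypothetical close root as $r=p^{-1}\sum c_jv_j$, and uses Lemma~\ref{l-bounds} to bound each $c_j$ individually: $c_1,\dots,c_9\in\cG(\le 2)$, $c_{10}\in\cG(\le 9)$. Congruence and norm constraints (Lemma~\ref{l-condition-on-c}) cut the box down further, and a computer search over the survivors finishes the proof. There is no case split and no use of $\sigma$.

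Your route buys tighter bounds: since $d_{p_\infty}(\tau)=d_{l_\infty}(\tau)=\tfrac14\log 2$, you get $|p^{-1}\ip{r}{p_\infty}|^2,|p^{-1}\ip{r}{l_\infty}|^2\le e^{2(d_0+\frac14\log 2)}\approx 3.2$, hence both in $\{0,1,2\}$, whereas the paper is stuck with $|c_{10}|^2\le 9$ at the asymmetric cusp $v_9$. The $\sigma$-symmetry halving the work is also pleasant.

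But two of your deferred steps are not routine. First, the claim that $(\C p_\infty+\C l_\infty)^\bot\cap L\simeq\bwg$: note that $\cG p_\infty+\cG l_\infty$ has Gram matrix $\bigl(\begin{smallmatrix}0&2p\\2\bar p&0\end{smallmatrix}\bigr)$ with determinant $-8$, so it is \emph{not} a hyperbolic cell and does not split off $L$ as a direct summand; identifying the orthogonal complement requires a genuine lattice computation. Moreover, since $e^{-\pi i/4}\notin\Q(i)$, a root $r$ is orthogonal to $\tau$ if and only if it is orthogonal to both $p_\infty$ and $l_\infty$, so ``no roots in that complement'' is literally the clause ``$\tau$ lies on no mirror'' --- you must establish it independently, not wave it through. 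Second, in your case $\ip{r}{p_\infty}=0$ you need that, among all lifts of every root of $N$, those achieving $|\ip{r}{l_\infty}|^2=2$ are exactly the unit multiples of the sixteen $s_v$ with $v\in\F_2^4$; the lifts fill cosets of $2p\cG$ whose minimal norms are $2$, $2$, or $4$, and you have to check which root classes of $N$ land in which coset. That, together with the double-shell enumeration in your last case, is a finite computation of size comparable to the paper's computer search --- so in the end both proofs terminate in a machine check, just organized differently.
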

Let $r$ be any root of $L$ such that $d(\tau, r^{\bot} ) \leq d_0$.
The lemma below gives us conditions that allow us to restrict the possibilities
for $r$ to a finite set.
\begin{lemma}
(a) Let $w$ be a root of $L$. Then 
\begin{equation*}
\abs{ p^{-1} \ip{w}{r} }^2 \leq 2 \cosh^2( d_0 + d( \tau, w^{\bot})).
\end{equation*}
\par
(b) Let $w$ be a primitive null vector of $L$. 
From \ref{def-horocyclic-distance}, recall that $d_w(\tau)   =\tfrac{1}{2} \log (\abs{\ip{w}{\tau}}^2/(-\tau^2))$.
One has
\begin{equation*}
\abs{ p^{-1} \ip{w}{r} }^2 \leq e^{2 (d_0 +  d_{w}(\tau))}.
\end{equation*}
In each case, note that $p^{-1}\ip{w}{r} \in \cG$.
So $\abs{p^{-1} \ip{w}{r}}^2$ belongs to $\lbrace 0, 1, 2, 4, 5, 8, 9, \dotsb \rbrace$.
Thus, in each case, there are finitely many possibilities for $\ip{w}{r}$.
\label{l-bounds}
\end{lemma}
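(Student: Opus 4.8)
The plan is to deduce both bounds from the distance formulas of \ref{t-complex-hyperbolic-space} and the ideal triangle inequality (Lemma~\ref{l-iti}), applied at the point $\tau$. Two standing remarks will be used. Since $L$ is $p$-modular, $L^{\vee}=p^{-1}L$, so $p^{-1}w\in L^{\vee}$ and $p^{-1}\langle w,r\rangle\in\cG$ (the conjugation hidden here does not affect the modulus); as the norms of Gaussian integers are exactly the sums of two squares, $\abs{p^{-1}\langle w,r\rangle}^{2}$ lies in $\lbrace 0,1,2,4,5,8,9,\dots\rbrace$, and a bound on it leaves only finitely many values of $\langle w,r\rangle$. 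Also $r$, and in part~(a) also $w$, are roots of the $p$-modular lattice $L$, so $r^{2}=w^{2}=2$.

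For part~(a) I would split into two cases. If $\C r+\C w$ is positive definite, Cauchy--Schwarz on this subspace gives $\abs{\langle w,r\rangle}^{2}\le r^{2}w^{2}=4$, so $\abs{p^{-1}\langle w,r\rangle}^{2}=\tfrac12\abs{\langle w,r\rangle}^{2}\le 2\le 2\cosh^{2}\!\bigl(d_{0}+d(\tau,w^{\bot})\bigr)$. Otherwise the mirrors $r^{\bot}$ and $w^{\bot}$ are disjoint in $\B(L)$, and the formula recalled in \ref{t-complex-hyperbolic-space} gives $\cosh d(r^{\bot},w^{\bot})=\abs{\langle w,r\rangle}/\sqrt{r^{2}w^{2}}=\abs{\langle w,r\rangle}/2$. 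The triangle inequality for distances to subsets gives $d(r^{\bot},w^{\bot})\le d(\tau,r^{\bot})+d(\tau,w^{\bot})\le d_{0}+d(\tau,w^{\bot})$, and monotonicity of $\cosh$ on $[0,\infty)$ then yields $\abs{\langle w,r\rangle}\le 2\cosh\!\bigl(d_{0}+d(\tau,w^{\bot})\bigr)$; squaring and halving gives~(a).

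For part~(b) the idea is to apply a Gram-determinant inequality not at the cusp $w$ itself but at the foot of the perpendicular from $\tau$ to the mirror. Let $x_{0}$ be the point on the mirror $r^{\bot}$ nearest to $\tau$, so $d(\tau,x_{0})=d(\tau,r^{\bot})\le d_{0}$, and scale its representative so that $x_{0}^{2}=-1$. Since $\langle r,x_{0}\rangle=0$, a direct expansion gives $\op{det}(\op{gram}(w,r,x_{0}))=\abs{\langle w,r\rangle}^{2}-r^{2}\abs{\langle w,x_{0}\rangle}^{2}$. The span of $w$, $r$, $x_{0}$ lies in a space of signature $(9,1)$ and contains the negative vector $x_{0}$, so it has at most one negative direction: if $w,r,x_{0}$ span a nondegenerate three-dimensional space its signature is $(2,1)$ and $\op{det}(\op{gram}(w,r,x_{0}))<0$, and otherwise $\op{det}(\op{gram}(w,r,x_{0}))=0$; in every case $\op{det}(\op{gram}(w,r,x_{0}))\le 0$. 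Hence $\abs{\langle w,r\rangle}^{2}\le r^{2}\abs{\langle w,x_{0}\rangle}^{2}=2\,\abs{\langle w,x_{0}\rangle}^{2}=2\,e^{2d_{w}(x_{0})}$, using $\abs{\langle w,x_{0}\rangle}^{2}=(-x_{0}^{2})e^{2d_{w}(x_{0})}$ from Definition~\ref{def-horocyclic-distance}. Finally Lemma~\ref{l-iti}(a) gives $d_{w}(x_{0})-d_{w}(\tau)\le d(x_{0},\tau)\le d_{0}$, so $\abs{\langle w,r\rangle}^{2}\le 2\,e^{2(d_{0}+d_{w}(\tau))}$, and halving gives~(b).

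I do not expect a real obstacle here; the only step that needs a word of care is the sign of the Gram determinant in~(b), which reduces to the elementary fact that a subspace of a Lorentzian space containing a vector of negative norm has at most one negative direction. The degenerate subcases are harmless, since there $\langle w,r\rangle=0$ and the asserted inequalities hold trivially; likewise the possibility $r\parallel w$ in part~(a) is subsumed by the positive-definite case.
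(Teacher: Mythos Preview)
Your argument is correct. Part~(a) is exactly the paper's argument: split on whether the two mirrors meet in $\overline{\B(L)}$, and in the non-meeting case combine the hyperplane--hyperplane distance formula with the triangle inequality through $\tau$.

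Part~(b) reaches the same bound as the paper but by a genuinely different route. The paper introduces, in addition to your $x_0=p_\tau$, the projection $p_w$ of the cusp $w$ onto $r^{\bot}$, computes explicitly $p_w=w-\tfrac{\ip{r}{w}}{r^2}r$ and $\abs{\ip{w}{p_w}}^2/(-p_w^2)=\abs{\ip{r}{w}}^2/r^2$, and then chains the inequalities $d_w(p_w)\le d_w(x_0)\le d_w(\tau)+d_0$, the first because $p_w$ minimizes $d_w$ on $r^{\bot}$ and the second by Lemma~\ref{l-iti}. You bypass $p_w$ entirely: the Gram-determinant inequality $\det\!\bigl(\op{gram}(w,r,x_0)\bigr)\le 0$ with $\ip{r}{x_0}=0$ directly gives $\abs{\ip{w}{r}}^2\le r^2\abs{\ip{w}{x_0}}^2$, which is exactly the content of the paper's step $d_w(p_w)\le d_w(x_0)$ unpacked. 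Your argument is slightly shorter and avoids the explicit projection formula; the paper's argument is a bit more geometric and makes the role of the nearest point on the mirror to the cusp visible.

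One small inaccuracy in your closing remarks: in the degenerate case of~(b) (when $w,r,x_0$ are linearly dependent) it is not true that $\ip{w}{r}=0$; for instance $w=\alpha r+\beta x_0$ null forces $2\abs{\alpha}^2=\abs{\beta}^2$, giving equality $\abs{\ip{w}{r}}^2=2\abs{\ip{w}{x_0}}^2$ rather than zero. This does not affect your proof, since $\det(\op{gram})=0$ already yields the needed inequality with equality, but the parenthetical should be dropped or corrected.
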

\begin{proof}
(a) If the hyperplanes $w^{\bot}$ and $r^{\bot}$ meet in 
$\B(L) \cup \partial \B(L)$, then $\abs{ p^{-1} \ip{w}{r} }^2 \leq 2$, so the required inequality holds trivially.
Otherwise,  the triangle inequality implies
\begin{equation*}
d(r^{\bot}, w^{\bot}) \leq d( r^{\bot}, \tau)  + d(\tau, w^{\bot})  \leq d_0 + d(\tau, w^{\bot}).
\end{equation*}
Part (a) follows using the formula given in \ref{t-complex-hyperbolic-space} for
distance between two hyperplanes.
\par
(b) Let $p_{\tau}$  be the projection of $\tau$  on $r^{\bot}$.
Then $d(\tau, p_{\tau}) = d(\tau, r^{\bot}) \leq d_0$.
Choose a small horoball $B$
around $w$ that does not meet $\tau$ and $r^{\bot}$.
Let $p_w$ be the point of $r^{\bot}$ that is nearest $B$. In other words $p_w$ is the
projection of $w$ on $r^{\bot}$. Then
 $p_w$ is closer to $w$ than $p_{\tau}$, that is, $d_w(p_w) \leq d_w(p_{\tau})$.
From the ideal triangle inequality, we have
\begin{equation*}
\tfrac{1}{2} \log \bigl( \tfrac{\abs{ \ip{w}{p_w} }^2 }{ -p_w^2 } \bigr) 
= d_w(p_w) \leq d_w(p_{\tau}) \leq d_w(\tau) + d(\tau, p_{\tau})  \leq d_w(\tau) + d_0.
\end{equation*}
So 
\begin{equation*}
\abs{ \ip{w}{p_w} }^2/ (-p_w^2) \leq e^{ 2(d_0 + d_w(\tau))}. 
\end{equation*}
The projection $p_w$ of $w$ on $r^{\bot}$ 
is represented by the intersection of $\C w + \C r$ and $r^{\bot}$. So
$p_w$ can be represented by the vector $p_w = w - \ip{r}{w} r/ r^2$.
One computes that $p_w^2   = - \abs{\ip{r}{w}}^2/ r^2$
and
$\ip{w}{p_w}^2/ (-p_w^2)  = \abs{\ip{r}{w}}^2/ r^2$.
Part (c) follows.
\end{proof}
Lemma \ref{l-bounds}(a) implies, in particular, that
$\abs{ p^{-1} \ip{r}{s_v} }^2 \leq 2 \cosh^2( 2 d_0) \approx 3.6642$ for all $v \in D$.
So 
\begin{equation*}
p^{-1} \ip{r}{s_v} \in \cG(\leq 2) \text{\; for all \;} v \in D.
\end{equation*}
(Recall that $\cG(\leq k)$ denotes the set of elements of $\cG$ of norm $\leq k$). 
To obtain further restrictions on $r$,
it will be convenient to use the basis $v_1, \dotsb, v_{10}$ for $L \otimes \C$ given below:
\begin{align*}
(v_1, v_2, \dotsb, v_{10}) = ( -s_{d_1}, s_{b_1},-s_{d_2}, s_{b_2}, -s_{d_3}, s_{b_3}, -s_{d_4}, s_{b_4}, (0^8; 1, 0), l_{\infty}).
\end{align*}
The roots $s_{d_i}, s_{b_i}$ were defined in the proof of lemma \ref{l-L-from-D}.
The inner products between $v_1, \dotsb, v_{10}$ are described as follows:
$v_1, \dotsb, v_8$ are eight roots that form an orthogonal basis for a 
maximal positive definite subspace of $L \otimes \C$ whose orthogonal complement
 has a basis consisting of the two
null vectors $v_9, v_{10}$. Finally $\ip{v_9}{v_{10}} = 2$.  
Write $r$ as a linear combination of $v_1, \dotsb, v_{10}$ in the form
\begin{equation}
r = p^{-1}(c_1 v_1 + c_2 v_2 + \dotsb + c_{10} v_{10}).
\label{eq-r}
\end{equation}
The lemma below gives enough conditions on $c_1, \dotsb, c_{10}$ to allow a computer
enumeration of all possible $(c_1, \dotsb, c_{10})$ and hence, of all possible $r$.
\begin{lemma}
One has $c_1 \dotsb, c_9 \in \cG(\leq 2)$ and $c_{10} \in \cG(\leq 9)$.
One has
\begin{equation*}
c_1 + c_2 \equiv c_3 + c_4 \equiv c_5 + c_6 \equiv c_7 + c_8 \equiv c_{10} \bmod p
\end{equation*}
and 
\begin{equation*}
 \abs{c_1}^2 + \dotsb +  \abs{c_8}^2  = 2 -  2 \op{Re}( \bar{c}_9 c_{10}) \in 
  \lbrace 2, 4, 6, 8, 10 \rbrace.
\end{equation*}
\label{l-condition-on-c}
\end{lemma}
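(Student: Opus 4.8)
\emph{Proof plan.} The plan is to read off the $c_j$ directly as inner products of $r$ with the basis $v_1,\dots,v_{10}$, and then bound everything via Lemma \ref{l-bounds}. The structural fact I would start from is that, $L$ being $p$-modular, $\ip{u}{w}\in p\cG$ for all $u,w\in L$: indeed $\ip{L}{w}\subseteq p\cG$ is equivalent to $w/p\in L^{\vee}=p^{-1}L$, i.e.\ to $w\in L$. Recalling the Gram data noted after \eqref{eq-r} --- the $v_1,\dots,v_8$ are pairwise orthogonal roots of norm $2$, the $v_9,v_{10}$ are null and orthogonal to all $v_j$ with $j\le 8$, and $\ip{v_9}{v_{10}}=2$ --- I would pair $pr=\sum_j c_j v_j$ against the $v_j$. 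This gives $p\ip{v_j}{r}=2c_j$ for $j\le 8$, $p\ip{v_9}{r}=2c_{10}$ and $p\ip{v_{10}}{r}=2c_9$; since $p^2=2i$ and each inner product on the right lies in $p\cG$, these become
\begin{equation*}
c_j=i\,p^{-1}\ip{v_j}{r}\quad(1\le j\le 8),\qquad c_9=i\,p^{-1}\ip{v_{10}}{r},\qquad c_{10}=i\,p^{-1}\ip{v_9}{r},
\end{equation*}
so in particular all $c_j\in\cG$, and their norms equal the corresponding $\abs{p^{-1}\ip{v_\bullet}{r}}^2$.

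For the size estimates: for $1\le j\le 8$ we have $v_j=\pm s_{d_i}$ or $\pm s_{b_i}$, so $\abs{c_j}^2=\abs{p^{-1}\ip{s_v}{r}}^2\le 2$ by the bound recorded just after Lemma \ref{l-bounds}; hence $c_1,\dots,c_8\in\cG(\leq 2)$. For $c_9$ and $c_{10}$ I would apply Lemma \ref{l-bounds}(b) to the primitive null vectors $v_{10}=l_{\infty}$ and $v_9=(0^8;1,0)$. Using $\tau=e^{-\pi i/4}l_{\infty}-p_{\infty}$ and the explicit $p_{\infty},l_{\infty}$ from \ref{t-configuration}, a direct computation should give $\ip{l_{\infty}}{p_{\infty}}=2-2i$, $\tau^2=-4\sqrt2$, $\abs{\ip{l_{\infty}}{\tau}}^2=8$ and $\abs{\ip{v_9}{\tau}}^2=12+8\sqrt2$, hence $d_{l_{\infty}}(\tau)=\tfrac14\log 2$ and $d_{v_9}(\tau)=\tfrac12\log\!\bigl(2+\tfrac{3}{\sqrt2}\bigr)$. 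Feeding these together with $d_0\approx 0.4090$ into Lemma \ref{l-bounds}(b) should give $\abs{p^{-1}\ip{l_{\infty}}{r}}^2\le\sqrt2\,e^{2d_0}<4$ and $\abs{p^{-1}\ip{v_9}{r}}^2\le\bigl(2+\tfrac{3}{\sqrt2}\bigr)e^{2d_0}<10$; since these are norms of Gaussian integers, hence in $\{0,1,2,4,5,8,9,\dots\}$, we get $c_9\in\cG(\leq 2)$ and $c_{10}\in\cG(\leq 9)$.

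For the congruences I would rewrite $pr=\sum_j c_j v_j$ in the coordinates of $L=4D_4^{\cG}\oplus\cell$. From the explicit roots in the proof of Lemma \ref{l-L-from-D} and their $S_4$-translates, in the $i$-th copy of $D_4^{\cG}$ only $v_{2i-1}=-s_{d_i}$, $v_{2i}=s_{b_i}$ and $v_{10}=l_{\infty}$ have nonzero entries, namely $(p,0)$, $(0,p)$ and $(0,p)$; so the $i$-th $D_4^{\cG}$-block of $r$ is $(c_{2i-1},\,c_{2i}+c_{10})$. The defining congruence of $D_4^{\cG}$ then forces $c_{2i-1}+c_{2i}+c_{10}\equiv 0\bmod p$, i.e.\ $c_{2i-1}+c_{2i}\equiv c_{10}\bmod p$ for $i=1,2,3,4$ (using $-1\equiv 1\bmod p$), which is the stated chain. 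Finally, pairing $r$ with itself via the Gram data gives $r^2=\sum_{j=1}^8\abs{c_j}^2+2\re(\bar c_9 c_{10})$, and $r^2=2$ is the displayed identity; since $\re(\bar c_9 c_{10})\in\Z$ the value $2-2\re(\bar c_9 c_{10})$ is a nonnegative even integer, and $\abs{\re(\bar c_9 c_{10})}\le\abs{c_9}\abs{c_{10}}\le 3\sqrt2<5$ bounds it by $10$. The value $0$ is ruled out, since it would force $c_1=\dots=c_8=0$, hence (from the $D_4^{\cG}$-block condition) $c_{10}\in p\cG$ and then (from integrality of the $\cell$-component of $r$) also $c_9\in p\cG$, so $\bar c_9 c_{10}\in 2\cG$ and $\re(\bar c_9 c_{10})$ would be even, contradicting $r^2=2$.

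\emph{Main obstacle.} Most of this is routine arithmetic in $\cG$; the one point that genuinely needs care is the numerical bound controlling $c_{10}$. The estimate $\abs{p^{-1}\ip{v_9}{r}}^2$ comes out near $9.34$, strictly between the attainable norms $9$ and $10$, so $d_{v_9}(\tau)$ must be computed exactly --- via the closed forms $\tau^2=-4\sqrt2$ and $\abs{\ip{v_9}{\tau}}^2=12+8\sqrt2$ --- rather than merely estimated, in order to land in $\cG(\leq 9)$ and not only $\cG(\leq 10)$. The other thing to be careful about is invoking $\ip{L}{L}\subseteq p\cG$ explicitly, since that is exactly what makes the $c_j$ lie in $\cG$.
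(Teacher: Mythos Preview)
Your proposal is correct and follows essentially the same route as the paper: express the $c_j$ as $\bar p^{-1}\ip{v_j}{r}$ (your $ip^{-1}$ is the same unit), invoke $p$-modularity for integrality, apply Lemma \ref{l-bounds}(a) to $v_1,\dots,v_8$ and \ref{l-bounds}(b) to $v_9,v_{10}$ for the norm bounds, read off the congruences from the $D_4^{\cG}$-blocks of $r$, take norms for the displayed identity, and rule out $\re(\bar c_9 c_{10})=1$ via $c_9,c_{10}\in p\cG$ forcing parity. The only cosmetic differences are that you compute $d_{v_9}(\tau)$ and $d_{l_\infty}(\tau)$ in closed form (the paper just quotes the numerics $\approx 9.3379$ and $\approx 3.2043$) and you bound $\lvert\re(\bar c_9 c_{10})\rvert$ by Cauchy--Schwarz rather than by enumeration; both are fine.
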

\begin{proof}
Taking inner product of $r$ with $v_1,\dotsb, v_{10}$ we find, 
 \begin{equation*}
c_9 = \bar{p}^{-1} \ip{v_{10}}{r}, \; c_{10} = \bar{p}^{-1}\ip{v_9}{r}, \;and \;
c_j = \bar{p}^{-1}\ip{v_j}{r} \text{\; for \;} j = 1, \dotsb, 8.
 \end{equation*}
 Since $L$ is $p$-modular, $c_1, \dotsb, c_{10} \in \cG$.
Write $r$ in the coordinate system $4 D_4^{\cG} \oplus \cell$: 
 \begin{align*}
r = (c_1, c_2 + c_{10}, c_3, c_4 + c_{10}, c_5, c_6 + c_{10}, c_7, c_8 + c_{10};
r_9, c_{10} )
\end{align*}
where
\begin{align} 
r_9 = (c_9 + (p-3) c_{10} - c_2 - c_4 - c_6 - c_8)/p.
\label{eq-r9}
 \end{align}
From the definition of $D_4^{\cG}$, it follows that
$c_{2j-1} + c_{2j} + c_{10} \equiv 0 \bmod p$ for $j = 1, 2, 3, 4$.
This implies the congruences on the $c_j$'s.
\par
The bounds on norms of $c_j$'s follow from  lemma \ref{l-bounds}. As
 already noted, lemma \ref{l-bounds}(a) implies 
 $ p^{-1} \ip{r}{s_v}  \in \cG(\leq 2)$ for $v \in D$.
In particular, $c_1, \dotsb, c_8 \in \cG(\leq 2)$.
Next,  lemma \ref{l-bounds}(b) implies that
\begin{equation*}
\abs{ p^{-1} \ip{v_{9}}{r} }^2 \leq e^{2 (d_0 +  d_{v_{9}}(\tau))} \approx 9.3379,
\end{equation*}
and
\begin{equation*}
\abs{ p^{-1} \ip{v_{10}}{r} }^2 \leq e^{2 (d_0 +  d_{v_{10}}(\tau))} \approx 3.2043.
\end{equation*}
This implies that  $c_{10} = \bar{p}^{-1}\ip{v_9}{r} \in \cG(\leq 9)$
and $c_{9} = \bar{p}^{-1}\ip{v_{10}}{r} \in \cG(\leq 2)$.
 Taking norm on both sides of equation  \eqref{eq-r} and rearranging, we obtain
 \begin{equation*}
\abs{c_1}^2 + \dotsb +  \abs{c_8}^2  = 2 - 2 \op{Re}( \bar{c}_9 c_{10}).
\end{equation*}
We already know that there are a small number of possibilities for $c_9$ and $c_{10}$. Enumerating these,
we find that $\op{Re} (\bar{c}_9 c_{10} )\in  [-4, 4] \cap \Z$.
Since $\abs{c_1}^2 + \dotsb +  \abs{c_8}^2  \geq 0$, it follows that
$\op{Re} (\bar{c}_9 c_{10}) \in [ -4, 1] \cap \Z$.
The lemma follows once we argue that $\op{Re} (\bar{c}_9 c_{10} )\neq 1$.
If possible, suppose $\op{Re} (\bar{c}_9 c_{10}) = 1$. Then $\sum_{j = 1}^8 \abs{c_j}^2 = 0$,
so $c_1 = \dotsb = c_8 = 0$. The congruences satisfied by $c_j$'s now implies
$c_{10} \equiv 0 \bmod p$. Now the condition $r_9 \in \cG$ implies that $c_9 \equiv 0 \bmod p$.
But this implies that $\op{Re} (\bar{c}_9 c_{10}) \in 2\Z$ which is a contradiction.
\end{proof}
\begin{proof}[proof of theorem \ref{th-simple-mirrors}]
We use a computer program to enumerate all possible tuples $(c_1, \dotsb, c_{10})$ satisfying the conditions of 
lemma \ref{l-condition-on-c} 
and subject to the further restriction that $c_9 \in \lbrace 0, 1, p \rbrace$.
We may assume $c_9 \in \lbrace 0, 1, p \rbrace$ since it is enough to enumerate
the possible tuples $(c_1, \dotsb, c_{10})$ up to units.
Let $r = (\sum_j c_j v_j)/p$. Then $r$ is a root of $L$ if and only if 
$r_9 \in \cG$ (see equation \eqref{eq-r9}) .
We run through the possibilities for $r$ and list those for which $r_9 \in \cG$ and
$d(\tau, r^{\bot}) \leq d_0$. This produces only the unit multiples of $\lbrace s_v \colon  v\in D \rbrace$.
\end{proof}
\begin{theorem}
The $i$-reflections in the $32$ roots $\lbrace s_v \colon v \in D \rbrace$
generate $R(L)$. These generators obey the Coxeter relations dictated by
$D$ as stated in the introduction.
\label{th-32-reflections-generate-R(L)}
\end{theorem}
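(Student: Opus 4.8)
The plan is to prove the two assertions of the theorem separately: first that the $i$-reflections $\{R_{s_v}: v\in D\}$ generate $R(L)$, and second that they satisfy the claimed Coxeter relations. The second part is the easier one and I would dispatch it first: the relations are local, depending only on the pairwise inner products $\ip{s_u}{s_v}$. From the definition of $\ip{\;}{\;}$ on $L^\circ$ in \eqref{eq-ipD} (which transports to $L$ via lemma \ref{l-L-from-D}), one reads off that $\abs{\ip{s_u}{s_v}}$ is $\sqrt2$ when $\{u,v\}$ is an incident point-hyperplane pair, is $2$ when $v=t_a(u)$, and is $0$ otherwise. Then one invokes the computation recorded in the \emph{roots, reflection groups} definition: two norm $2$ vectors $s,t$ give commuting reflections when $\ip{s}{t}=0$, braiding reflections ($R_sR_tR_s=R_tR_sR_t$) when $\abs{\ip{s}{t}}=\sqrt2$, and satisfy $R_sR_tR_sR_t=R_tR_sR_tR_s$ when $\abs{\ip{s}{t}}=2$. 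This exactly matches the solid-edge / dotted-edge / no-edge rules described in the introduction, so part (c) of the theorem follows immediately.

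For the generation statement, the strategy is a height-reduction argument anchored at the cusp $p_\infty$, reducing to the already-proved lemma \ref{l-finite-list-of-generators-for-R(L)}. Let $G\subseteq R(L)$ be the subgroup generated by the $32$ reflections $R_{s_v}$. First I would observe that $G$ contains a large supply of translations: the sixteen roots $\{s_v: v\in\F_2^4\}$ all have mirrors through $p_\infty$, and the linear relations \eqref{eq-translation-relation}, \eqref{eq-points-from-hyperplanes}, \eqref{eq-hyperplanes-from-points} let one express products of these reflections as elements of the Heisenberg group $\T$ attached to the cusp $p_\infty$ (after choosing an identification $L=\Lambda\oplus\cell$ with $\rho$ corresponding to $p_\infty$, which is legitimate since $p_\infty$ is a primitive null vector not orthogonal to any root, by lemma \ref{l-R(L)-is-transitive-on-BW-cusps}). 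The point is to show $G\cap\T$ has finite index in $\T$ and that $G$ contains reflections in enough first-shell and second-shell roots, so that the apparatus of corollary \ref{l-R(L)-has-many-translations} and lemma \ref{l-finite-list-of-generators-for-R(L)} applies with this $32$-element generating set in place of $S_0\cup S_1\cup S_2$.

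Concretely, the key steps in order: (1) identify $p_\infty$ with the standard cusp $\rho$, and verify that among $\{s_v:v\in\F_2^4\}$ we find (up to the action of the translations they generate) the roots $r_1=(0^8;1,1)$ and $r_2=(0^8;1,i)$ and translates $T_{\lambda_j,z_j}(r_k)$, $T_{i\lambda_j,z_j}(r_k)$ spanning a basis-worth of directions $\lambda_j$ of $\Lambda$ — equivalently, check that the sixteen first-shell-type roots $s_v$, $v\in\F_2^4$, have the right $\sigma$-coordinates to cover a $\cG$-basis of $\Lambda$ modulo the lattice $p\Lambda$ and modulo central translations; (2) conclude via lemma \ref{l-r1-r2} and corollary \ref{l-R(L)-has-many-translations}(a) that $G$ contains $T_{p\lambda,*}$ for all $\lambda\in\Lambda$ and all central $T_{0,4im}$; (3) use the translation-conjugation trick from the proof of lemma \ref{l-finite-list-of-generators-for-R(L)} — every first-shell root is $G$-translate-equivalent to one of a finite list, and similarly for second-shell roots, using the sixteen roots $\{s_v:v\in\cK\}$ clustered at the other cusp $l_\infty$ together with the symmetry $\sigma_L$ of \ref{t-configuration}, which conjugates the $p_\infty$-picture to the $l_\infty$-picture and lies in $G$ (being a product of the $R_{s_v}$, or at least normalizing $G$); (4) deduce that $G$ contains reflections in all first- and second-shell roots, hence $G=R(L)$ by lemma \ref{l-first-two-shells-generate}. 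The main obstacle is step (1) together with step (3): one must check that the specific $32$ roots written down in lemma \ref{l-L-from-D} really do generate enough translations — i.e. that their $\sigma$-coordinates hit a full set of representatives for $\Lambda/p\Lambda$ in the sense needed — and that $\sigma_L$ (or an explicit word in the $R_{s_v}$ realizing the point-hyperplane swap) is available inside $G$; this is a finite but slightly delicate bookkeeping computation with the Barnes–Wall coordinates, naturally handled alongside the computer verification already used in the proof of theorem \ref{th-simple-mirrors}.
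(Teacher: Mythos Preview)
Your treatment of the Coxeter relations is correct and matches the paper: the relations are read off directly from the inner products in \eqref{eq-ipD}.

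For the generation statement, however, your outline has a real gap at step (1). You propose to identify the cusp $p_\infty$ with the standard cusp $\rho=(0;0,1)$ and then invoke the first/second-shell machinery of section 3 and lemmas \ref{l-first-two-shells-generate}, \ref{l-finite-list-of-generators-for-R(L)}. But $p_\infty$ is \emph{not} a Barnes--Wall cusp: the sixteen mirrors $\{s_v^\perp : v\in\F_2^4\}$ all pass through $p_\infty$ (as you yourself note), so $p_\infty$ is orthogonal to roots, whereas $\rho^\perp/\rho\simeq\bwg$ has no roots. Lemma \ref{l-R(L)-is-transitive-on-BW-cusps} explicitly excludes such cusps, and the covering lemma \ref{l-covering-lemma} --- on which lemmas \ref{l-first-two-shells-generate} and \ref{l-finite-list-of-generators-for-R(L)} rest --- is specific to $\Lambda=\bwg$. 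So the identification you need is simply unavailable, and the subsequent steps (2)--(4) are not grounded. A related issue: even granting a finite-index translation subgroup inside $G$, the argument of lemma \ref{l-finite-list-of-generators-for-R(L)} requires first- and second-shell roots covering on the order of $2^8$ and $2^{15}$ cosets respectively, and it is not at all clear how to produce these from $32$ roots clustered at two non-Barnes--Wall cusps.

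The paper's route is quite different and more direct. It takes the already-established finite generating set $S_0\cup S_1\cup S_2$ of lemma \ref{l-finite-list-of-generators-for-R(L)} (about $123426$ roots, expressed in $\bwg\oplus\cell$ coordinates and transported via an explicit isomorphism to $4D_4^{\cG}\oplus\cell$), and for each such root $x_0$ runs a height reduction with respect to the interior point $\tau$ --- not any cusp --- by repeatedly applying reflections from the $32$-element set $S$ so as to decrease $d(\tau,x_j^\perp)$. A computer check shows this succeeds for all but $401$ roots; those are handled by first conjugating by a single auxiliary reflection $R_y$ (with $y\in S_2$) that is itself already known to lie in $G$. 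The point is that the paper leverages theorem \ref{th-simple-mirrors} (the $32$ mirrors are exactly the mirrors nearest $\tau$) to make $\tau$-height reduction the natural tool, rather than trying to redo the cusp-based analysis with a new cusp.
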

\begin{proof}
Write $S = \lbrace s_v \colon v \in D \rbrace$. Let $G$ denote the subgroup
of $R(L)$ generated by the reflections in $S$.
Lemma \ref{l-finite-list-of-generators-for-R(L)} provides us a finite set of roots
$S_0 \cup S_1 \cup S_2$  such that the $i$-reflections in them generate $R(L)$.
We take a root $x_0 \in S_0 \cup S_1 \cup S_2$ and try to find some $s \in S$ and $\xi \in \lbrace i, i^2, i^3 \rbrace$
 such that $x_1^{\bot} = R_{s}^{\xi} (x_0^{\bot})$ is closer to $\tau$
than $x_0^{\bot}$. We repeat this to obtain a sequence of roots $x_0 , x_1, x_2 \dotsb$.
If some $x_j$ is an unit multiple of $S$, then we say that height reduction (with respect to $\tau$)
is successful for $x_0$ and in this case, we obtain $R_{x_0} \in G$.
 \par
  In a computer calculation, height reductions is successful for most of the $123426$
  roots in $S_0 \cup S_1 \cup S_2$. For $401$ roots (all from $S_2$)
 height reductions is not successful. In these cases,
 we end up with a root  $x_j^{\bot}$ whose distance from $\tau$ cannot be
 decreased by any reflection in $S$. Let $S'$ be the set of these $401$ roots.
 To deal with these cases, by little experimentation, we found a root $y \in S_2 - S'$ 
 such that height reduction is successful for all the root in $\lbrace R_y(x) \colon x \in S' \rbrace$.
This means that $R_y \in G$ and further 
 that for each $x \in S'$, one has $R_y R_x R_y^{-1} \in G$; hence $R_x \in G$.
This proves that the reflections in $S$ generate $R(L)$. 
The Coxeter relations between these generators are consequences of the inner products
between the roots in $S$, as given in \eqref{eq-ipD}.
\end{proof}
\begin{topic}{\bf Remarks on computer calculations:} 
\label{t-computer}
In the proof of theorem \ref{th-32-reflections-generate-R(L)} we glossed over
one step. The  roots $S_0 \cup S_1 \cup S_2$
in lemma \ref{l-finite-list-of-generators-for-R(L)} are given in the coordinate
system $\bwg \oplus \cell$ while the $32$ roots in $S$ are given in the coordinate
system $4 D_4^{\cG} \oplus \cell$. So we need to find an explicit isomorphism 
from $\bwg \oplus \cell$ to $4 D_4^{\cG} \oplus \cell$. 
Our computation used the following $10$ vectors in $\bwg  \oplus \cell$:
\begin{align*}
v_1 &= \ph[  \ph p, \ph  p, \ph  \bar{p},  -\bar{p}, \ph  \bar{p},  -\bar{p},  \ph p,  \ph p, \ph \ph  2, -2]/2, \\
v_2 &=-[  \ph \bar{p}, \ph  p,  \ph p,  \ph \bar{p}, \ph  p,  -\bar{p},  -\bar{p},  \ph p, \ph \ph  2,  -2]/2, \\
v_3 &=\ph [ \ph  0, \ph  p,  \ph 0,  \ph 0, \ph  0, \ph  0,  \ph 0, \ph  p, \ph \ph  1,   -1], \\
v_4 &=-[  \ph 1, \ph  i, \ph  0,  \ph 0, \ph  1, \ph   i,  \ph 0, \ph  0, \ph \ph 1,   -1], \\
v_5 &=\ph [ \ph  \bar{p}, \ph  p, \ph  p,  -\bar{p},  \ph p,  -\bar{p},  \ph \bar{p},  \ph p, \ph \ph  2,   -2]/2, \\
v_6 &=-[ \ph  1, \ph  i, \ph  0,  \ph 0, \ph  0, \ph  0, \ph  i,  \ph 1, \ph \ph  1,   -1], \\
v_7 &=\ph [  \ph p,  \ph p,  \ph p, \ph  p, \ph  p, \ph  p, \ph  p, \ph  p, \ph \ph  2,   -2]/2, \\
v_8 &=-[  \ph \bar{p},  \ph p,  -p,  -\bar{p}, \ph  p,  -\bar{p},  -\bar{p}, \ph  p, \ph \ph  2,   -2]/2, \\
v_9 &=\ph [ \ph 3 \bar{p}, \ph 4 + p,  \ph \bar{p}, \ph  p, \ph 2+\bar{p}, \ph 1+3 i, \ph p,  \ph 4+\bar{p}, \ph \ph 4-6 i, \ph -4 \bar{p}]/2, \\
v_{10} &=-[ \ph  0, \ph  0, \ph  0,  \ph 0,  \ph 0,  \ph  0, \ph 0,  \ph 0, \ph \ph   1,  \ph  i] - v_9.
\end{align*}
One verifies that the inner products between these $10$ vectors are the same
as the $10$ vectors $(d_1, c_1, d_2, c_2, d_3, c_3, d_4, c_4, (0^8; 1, 0), (0^8; 0, 1))$
that form a basis of $4 D_4^{\cG} \oplus H$. So sending $v_1, v_2, \dotsb$
to $d_1, c_1, \dotsb$ defines an isomorphism from 
$\bwg \oplus \cell$ to $4 D_4^{\cG} \oplus \cell$. Finding the vectors $v_1, \dotsb, v_{10}$ required
considerable computation using a list of the $4320$ short vectors of $\bwg$.
%We looked for $v_1, \dotsb, v_8$ having the form $v_j = (u_j; \pm 1 , *)$ with $u_j$ is a short vector of $\bwg$
%such that they have the same inner product as $(d_1, c_1, \dotsb, d_4, c_4)$. This is a manageable computation.
%After finding $v_1, \dotsb, v_8$, one computes their orthogonal complement $H'$ in $\bwg \oplus \cell$.
%One has  $H' \simeq \cell$. Now it is not hard to find two
%primitive null vectors in $H'$ with inner product $p$. This produced $v_9, v_{10}$.
We shall omit these computational details, since,
for the purpose of proving theorem \ref{th-32-reflections-generate-R(L)},
these computations are irrelevant
 once the vectors $v_1, \dotsb, v_{10}$ have been found.
 One has to simply verify that $v_1, v_2, \dotsb$ has the same gram matrix as
 $d_1, c_1, \dotsb$. The root $y \in 4D_4^{\cG} \oplus \cell$ used in the proof 
 of theorem \ref{th-32-reflections-generate-R(L)} to perturb the $401$ elements of $S'$ is
 \begin{equation*}
 y = [1 + 2 i, 3, 1 + i, 5 + i, 1 + 2 i, 4 + i, 1 + 2 i, 4 + i, 7, - 6 i].
 \end{equation*}
 \par
All the computer calculations needed in this article were performed using the pari/gp calculator.
The calculations are contained in the file \verb|bw2.gp|, available on the website
\verb|math.iastate.edu/~tathagat/codes|. 
The calculations needed for theorem \ref{th-32-reflections-generate-R(L)}
only use exact arithmetic.
We should note that a lot of calculations performed while
trying height reduction on the $123426$ roots become redundant after the fact. 
To aid verification of our proof, 
below, we sketch the computations one needs to perform.
The $32$ roots in $S$ are named $\textsf{ss}[1], \dotsb, \textsf{ss}[32]$ in 
\verb|bw2.gp|.
The function  \verb|generate_S_all()|
generates the $123426$ roots in $S_0 \cup S_1 \cup S_2$. These are named
\verb|s_list[1], ..., s_list[123426]|.
%The function \verb|generate_S_all()| uses several other codes. This
%constitute the bulk of computations needed to verify
%theorem \ref{th-32-reflections-generate-R(L)}.
\par
The function \verb|generate_path()| 
uses  \verb|generate_S_all()| and runs the height reduction algorithm 
on the $123426$ roots. It outputs
a large file \verb|bwpath| that contains a list
of vectors   \verb| bwpath[1], ... ,bwpath[123426]|. This program takes about an hour and half to run
on a laptop. All the other codes take at most a few minutes.
Each \verb|bwpath[i]| is a string of integers $(n_1, \dotsb, n_k)$ with 
each $n_j \in \lbrace 1, \dotsb, 64 \rbrace$. 
Rename the file \verb| bwpath| as \verb| bwpath.gp| and read it into pari/gp.
For each $j$, let 
\begin{equation*}
R_j = \begin{cases}
R_{\mathsf{ss}[n_j]} & \text{\; if \;} 1 \leq n_j \leq 32, \\
R_{\mathsf{ss}[n_j - 32]}^{-1} & \text{\; if \;} 33 \leq n_j \leq 64. 
\end{cases}
\end{equation*}
The code 
\verb|verify_path_all()|
checks  that for each applying $R_k R_{k-1} \dotsb R_2 R_1$ to
\verb|s_list[i]|
produces an unit multiple of an element of $S$. 
\end{topic}
\begin{theorem}
The thirteen $i$-reflections in $s_a, s_{b_k}, s_{c_k}, s_{d_k}$ for $k = 1,2,3,4$ generate $R(L)$.
%(Here, for simplicity, we are writing $a, b_1, c_1, \dotsb$ instead of $s_a, s_{b_1}, s_{c_1}, \dotsb$).
\label{th-13-reflections-generate-R(L)}
\end{theorem}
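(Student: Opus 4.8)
The plan is to deduce the theorem from Theorem \ref{th-32-reflections-generate-R(L)}. Let $G$ denote the subgroup of $R(L)$ generated by the thirteen $i$-reflections $R_{s_a}$, $R_{s_{b_k}}$, $R_{s_{c_k}}$, $R_{s_{d_k}}$ for $k=1,2,3,4$. By Theorem \ref{th-32-reflections-generate-R(L)} the group $R(L)$ is generated by the thirty-two reflections $\{R_{s_v}\colon v\in D\}$, so it suffices to show that $R_{s_v}\in G$ for each of the nineteen remaining nodes, i.e. for $v\in\{z\}\cup\{g_k\}\cup\{e_{ij}\}\cup\{f_k\}\cup\{h_k\}$.

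The first move is a symmetry reduction. The symmetric group $S_4$ acting on $\F_2^4$ by coordinate permutations lies inside $\op{Aut}(L)$ (it permutes the four copies of $D_4^{\cG}$), and by the way the thirty-two roots were defined in the proof of Lemma \ref{l-L-from-D} one has $\pi\cdot s_v=s_{\pi v}$ for every $\pi\in S_4$ and every $v\in D$. Since $S_4$ permutes the index set $\{a\}\cup\{b_k\}\cup\{c_k\}\cup\{d_k\}$ among itself, conjugation by $S_4$ permutes the thirteen generators, so $S_4$ normalizes $G$. The $S_4$-orbits on the nineteen remaining nodes are $\{z\}$, $\{g_1,\dots,g_4\}$, $\{e_{ij}\}$, $\{f_1,\dots,f_4\}$, $\{h_1,\dots,h_4\}$, so it is enough to prove $R_{s_v}\in G$ for the five representatives $v=z,g_1,e_{12},f_1,h_1$.

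To produce these five reflections inside $G$ I would use the basic mechanism $R_sR_tR_s^{-1}=R_{R_s(t)}$, valid for norm $2$ roots $s,t$ with $R_s\in\op{Aut}(L)$. Feeding in the thirteen generators and the braidings recorded in \eqref{eq-ipD}, one generates reflections in new roots such as $R_{s_{b_k}}(s_a)$, $R_{s_{b_k}}(s_{c_k})$, $R_{s_{c_k}}(s_{d_k})$, and then, using the linear relations \eqref{eq-translation-relation}, \eqref{eq-points-from-hyperplanes}, \eqref{eq-hyperplanes-from-points} among the thirty-two roots, one recognises suitable unit multiples of the missing roots. It is natural to respect the bipartite structure of $D$: first recover the eight missing hyperplane reflections $R_{s_{f_k}},R_{s_{h_k}}$ from the point reflections $R_{s_a},R_{s_{c_k}}$ and the hyperplane reflections $R_{s_{b_k}},R_{s_{d_k}}$ already in $G$, and then recover the missing point reflections $R_{s_{g_k}},R_{s_{e_{ij}}},R_{s_z}$. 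In practice this amounts to exhibiting, for each of the five representatives, an explicit word in the thirteen generators; such a word is located by running the height-reduction search towards $\tau$ used in the proof of Theorem \ref{th-32-reflections-generate-R(L)}, but allowing conjugation only by the thirteen reflections (and, when the reduction stalls, perturbing by one auxiliary reflection already known to lie in $G$, exactly as in that proof). Each resulting identity is then a finite matrix identity over $\cG$ and is checked directly.

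The main obstacle is the same as in Section 5: ensuring the search terminates. This is controlled by the same inputs — Lemma \ref{l-bounds} and Lemma \ref{l-condition-on-c} bound $d(\tau,r^{\bot})\le d_0$ and thereby confine the candidate roots to a finite list, and Lemma \ref{l-covering-lemma} drives the height down — so the only genuinely new bookkeeping is to verify that every required height reduction can be arranged using conjugations by the thirteen reflections alone. The $S_4$-reduction above keeps this to five cases, which is why I would carry it out first.
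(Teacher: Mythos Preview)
Your framework matches the paper's: reduce to Theorem~\ref{th-32-reflections-generate-R(L)}, then use the $S_4$ symmetry of the diagram to cut the nineteen missing reflections down to one representative per orbit. Where you diverge is in the mechanism for producing the missing reflections inside $G$. You propose a search: conjugate repeatedly by the thirteen generators, try to recognise unit multiples of the missing $s_v$ via the linear relations \eqref{eq-translation-relation}--\eqref{eq-hyperplanes-from-points}, and if height reduction towards $\tau$ stalls, perturb as in the proof of Theorem~\ref{th-32-reflections-generate-R(L)}. The paper instead uses a single clean device, the \emph{deflation relation} $\cC_{7}\langle\,\cdot\,\rangle$ on octagons in $D$. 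For instance $(d_2,c_2,b_2,a,b_1,c_1,d_1,e_{12})$ is an octagon in $D$, and the relation $\cC_7$ holds among the corresponding $i$-reflections; since seven of the eight lie in $G$, so does $r_{e_{12}}$. Four more octagons then yield $r_z,r_{f_k},r_{h_k},r_{g_k}$ in turn (the order matters: later octagons use vertices secured by earlier ones). This is verifiable by hand and requires no machine search.

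Your approach is not wrong in principle, but as written it is a plan rather than a proof: you never exhibit the words, and the termination argument you offer is garbled. Lemma~\ref{l-covering-lemma} governs height reduction towards the cusp $\rho$, not towards $\tau$; Lemmas~\ref{l-bounds} and~\ref{l-condition-on-c} enumerate mirrors near $\tau$ but say nothing about whether the thirteen reflections alone suffice to reduce distance to $\tau$. In the proof of Theorem~\ref{th-32-reflections-generate-R(L)} the reduction used all $32$ reflections and still stalled on $401$ roots; there is no a priori reason the restricted search with only thirteen conjugators terminates, so that step needs either to be carried out explicitly or replaced by an argument like the paper's deflation relations.
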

Theorem \ref{th-13-reflections-generate-R(L)} follows quickly from 
Theorem \ref{th-32-reflections-generate-R(L)}. Before
giving the proof, we recall a definition from \cite{TB:uggr}.
\begin{definition}
Let $\lbrace x_j \colon j \in \Z/k \rbrace$ be the elements in a monoid and let $m$ be a positive integer.
Let $\cC_m \langle x_0, \dotsb, x_{k-1} \rangle$
denote the positive homogeneous relation
\begin{equation*}
x_0 x_1 \dotsb x_{m-1} = x_1 x_2 \dotsb x_m.
\end{equation*}
For example $\cC_2 \langle x, y \rangle$ (resp. $\cC_3 \langle x, y \rangle$) denote the
relations $x y =  y x$ (resp. $x y x = y x y $).  
Let $\tilde{A}_{n-1}$ be the affine Dynkin diagram of type $A_{n-1}$ with vertices labeled
by $\Z/n$. Let $\lbrace y_j \colon j \in \Z/n \rbrace$ be the  generators
for the corresponding Artin group: 
\begin{equation}
y_j y_k y_j = y_k y_j y_j \text{  if $j$ and $k$ are adjacent and \;} y_j y_k = y_k y_j
 \text{ otherwise}.
\label{eq-bc}
\end{equation}
If $y_j^2 = 1$, then we have a presentation of the Affine Weyl group of type $A_n$.
 The relation $\cC_{n-1} \langle y_1, \dotsb, y_{n} \rangle$
collapses this affine Weyl group to the spherical Weyl group or the symmetric group.
Following  \cite{CSi:26}, we call this relation {\it deflating the $n$-gon} $(y_1, \dotsb, y_{n})$.
One can verify that in the presence of the braiding and commuting relations of \eqref{eq-bc}, 
the deflation relation $\cC_{n-1} \langle  y_1, \dotsb, y_{n} \rangle$ is equivalent to 
$\cC_{n-1} \langle  y_{j+1}, \dotsb, y_{ j + n} \rangle$ for any $j$
(see \cite{TB:uggr}, lemma 4.3 (a)).
\end{definition}
\begin{proof}[proof of \ref{th-13-reflections-generate-R(L)}]
Write $r_v = R_{s_v}$.
Let $G$ be the subgroup of $R(L)$ generated by the thirteen reflections $r_a, r_{b_k}, r_{c_k}, r_{d_k}$.
One verifies that $(d_2, c_2, b_2, a, b_1, c_1, d_1, e_{1 2} )$ is an octagon in the graph $D$
and the deflation relation 
\begin{equation*}
\cC_7 \langle r_{d_2}, r_{c_2}, r_{b_2}, r_a, r_{b_1}, r_{c_1}, r_{d_1}, r_{e_{12}} \rangle
\end{equation*}
holds
in $R(L)$. This shows that $r_{e_{12}} \in G$. By $S_4$ symmetry we obtain 
$r_{e_{j k}} \in G$ for all $j, k$.
Next, one verifies that one has the octagons 
$(d_1, c_1, b_1, e_{3 4}, b_2, c_2, d_2, z)$, 
$(c_2, b_2, a, b_3, e_{2 4}, d_4, c_4, f_1)$, 
$(c_4, b_4, e_{1 3}, d_3, z, d_2, c_2, h_1)$,
$(d_2, e_{2 3}, f_3, c_4, h_3, a, b_2, g_1)$
in $D$ and deflation relation holds for each of these octagons.
Applying $S_4$ symmetry it successively follows that
$r_z, r_{f_k}, r_{h_k}, r_{g_k} \in G$ as well. Now the theorem follows from
\ref{th-32-reflections-generate-R(L)}.
\end{proof}
%
%*******************************************************************************************************
%
%
%
%
%*******************************************************************************************************
%
%
%*******************************************************************************************************
%

%
\end{document}